\pgfplotsset{compat=newest}
\newtheorem{theorem}{Theorem}[section]
\newtheorem{lemma}[theorem]{Lemma}
\newtheorem{corollary}[theorem]{Corollary}
\newtheorem{definition}[theorem]{Definition}
\newtheorem{proposition}[theorem]{Proposition}
\newtheorem{remark}[theorem]{Remark}
\def\NN{\mathbb{N}}
\def\RR{\mathbb{R}}
\def\bs{\bigskip}
\def\eps{\varepsilon}
\def\bar#1{{\overline #1}}
\def\pa{\partial}
\def\calV{{\mathcal V}}
\def\1e{\mathds{1}}
\definecolor{mygreen}{rgb}{0,0.7,0}
\newcommand{\blue}[1]{\textcolor{black}{#1}}
\newcommand{\red}[1]{\textcolor{black}{#1}}
\definecolor{mygreen}{rgb}{0,0.4,0}
\newcommand{\michel}[1]{\textcolor{black}{#1}}
\begin{document}

\title{\sf Optimization of spatial control strategies for population replacement, application to {\it Wolbachia}}
\author{Michel Duprez\footnote{Inria, \'equipe Mimesis, Universit\'e de Strasbourg, Icube, 1 Place de l’H\^opital, Strasbourg, France ({\tt michel.duprez@inria.fr}).}
\and Romane H\'elie\footnote{IRMA, Universit\'e de Strasbourg, CNRS UMR 7501, Inria, 7 rue Ren\'e Descartes, 67084 Strasbourg, France ({\tt romane.helie@unistra.fr}).}
\and Yannick Privat\footnote{IRMA, Universit\'e de Strasbourg, CNRS UMR 7501, Inria, 7 rue Ren\'e Descartes, 67084 Strasbourg, France ({\tt yannick.privat@unistra.fr}).} 
\and Nicolas Vauchelet\footnote{Laboratoire Analyse, G\'eom\'etrie et Applications CNRS UMR 7539, Universit\'e Sorbonne Paris Nord, Villetaneuse, France ({\tt vauchelet@math.univ-paris13.fr}).}
}

\maketitle

\begin{center}
\textit{This article is dedicated to our friend Enrique Zuazua on the occasion of his 60\textsuperscript{th} birthday.}
\end{center}

\begin{abstract}
In this article, we are interested in the analysis and simulation of solutions to an optimal control problem motivated by population dynamics issues. In order to control the spread of mosquito-borne arboviruses, the population replacement technique consists in releasing into the environment mosquitoes infected with the {\it Wolbachia} bacterium, which greatly reduces the transmission of the virus to the humans. 
Spatial releases are then sought in such a way that the infected mosquito population invades the uninfected mosquito population. Assuming very high mosquito fecundity rates, we first introduce an asymptotic model on the proportion of infected mosquitoes and then an optimal control problem to determine the best spatial strategy to achieve these releases. We then analyze this problem, including the optimality of natural candidates and carry out first numerical simulations in one dimension of space to illustrate the relevance of our approach. 
\end{abstract}



\noindent {\bf Keywords: } reaction-diffusion equation, optimal control, second order optimality conditions.\\

\noindent {\bf 2010 AMS subject classifications: }92D25, 49K15, 65K10

\bs

\tableofcontents

\section{Introduction and state of the art}

\textit{Aedes} mosquitoes are the main vector of the transmission to human of many diseases, such as dengue, zika, or chikungunya. Since there are still no vaccines against these diseases, the best way to fight against them is to act on the vector population. Several techniques have been proposed. Some approaches aim at reducing the size of the population of mosquitoes. The use of insecticides is one of them, but its environmental consequences are too important to be used for a long time and on a large scale. The sterile insect technique (SIT) or the incompatible insect technique (IIT) are very promising strategies, consisting in massive releases of sterile or incompatible males, after mating with these males, the wild females will not produce viable eggs which should reduce the size of the populations (see e.g. \cite{SIT} and references therein). This method has already been implemented successfully on the field (see \cite{Bossinetal,Zheng}).
Other strategies are based on genetic manipulations like, for example, the release of insects carrying a dominant lethal (RIDL) \cite{Thomasetal,Heinrichetal,Fuetal}.

However, the suppression of one population of insects might have consequences on the environment. Then, other approaches aim at replacing the wild population of mosquitoes by another population inoffensive to human. One strategy under investigation consists in using the bacteria \textit{Wolbachia} taking advantage of phenomena called \textit{cytoplasmic incompatibility} (CI) and \textit{pathogen interference} (PI) \cite{Bourtzis, Sinkins}.
In key vector species such as \textit{Aedes aegypti}, if a male mosquito infected with \textit{Wolbachia} mates with a non-infected female, the embryos die early in development \cite{Wer.Wolbachia}. This is the so-called \textit{cytoplasmic incompatibility} (CI). Moreover, it has been observed that \textit{Aedes} mosquitoes infected with some \textit{Wolbachia} strains are not able to transmit viruses like dengue, chikungunya and zika \cite{Wal.wMel}, this is the \textit{pathogen interference} (PI).
Then, one may release mosquitoes artificially infected by \textit{Wolbachia} to mate with wild ones. Over time and if the releases are large and long enough, it can be expected that the majority of mosquitoes will carry Wolbachia, due to cytoplasmic incompatibility. As a result of PI, the mosquito population then has reduced vectorial competence.

In this paper, we focus on the \textit{Wolbachia} strategy and investigate the question of optimizing the spatial 
distribution of the releases.
Several mathematical models have been proposed for the \textit{Wolbachia} technique, see e.g. \cite{Farkas, Fenton, Schraiber, Hughes}.
In these papers, the authors model the time dynamics of the mosquitoes population.
Then, the question of optimizing the time of releases has been investigated e.g. in \cite{colombien,APSV2018, bliman2,MBE}.
However the spatial 
distribution of mosquitoes may have an impact on the success of the strategy. It is therefore relevant to add spatial dependence in mathematical models, which makes the study much more complicated.

In order to have a model simple enough to be tractable from a mathematical point of view, the authors in \cite{BarTur.Spatial} introduce a model focusing only on the proportion of \textit{Wolbachia}-infected mosquitoes, denoted $p$ in the sequel : 
$$
p:=\frac{n_{in}}{n_{in}+n_{un}}
$$ 
where $n_{in}$ is the density of \textit{Wolbachia}-infected mosquitoes and $n_{un}$ the density of uninfected mosquitoes. This quantity solves a scalar reaction-diffusion equation
$$
\frac{\partial p}{\partial t} - D\Delta p = f(p),
$$
where $D$ is a diffusion coefficient and $f$ is a bistable function\footnote{The wording ``{\bfseries bistable function}'' means that $f(0) = f(1) = 0$ and there exists $\theta \in (0, 1)$ such that $f(x)(x - \theta) < 0$ on $(0, 1) \setminus \{ \theta \}$ (in particular, one has necessarily $f(\theta)=0$ whenever $f$ is \blue{continuous})}.
For this model, the conditions to initiate the spatial spread are well-known \cite{Zubelli}.
It has been proved later in \cite{Stru2016} that this model may be rigorously derived from a more general system governing the dynamics of \textit{Wolbachia}-infected and \textit{Wolbachia}-uninfected mosquitoes by performing a large fecundity asymptotics.

In this study, we are investigating the question of the best spatial strategy for mosquito release, i.e., giving a certain amount of mosquitoes, we are trying to determine optimal locations to release them in order to ensure the invasion of the environment by \textit{Wolbachia}-infected mosquitoes.
If we denote $u$ the release function, then the above model is modified into
\begin{equation}\label{eq:pintro}
 \left\{
 \begin{aligned}
\displaystyle \frac{\partial p(t,x)}{\partial t} - D \Delta p(t,x) &= f(p(t,x))+u(t,x)g(p(t,x)), && \quad t\in (0,T), \quad x\in \Omega, \\
\partial_\nu p(t,x) &= 0, && \quad  t\in(0,T), ~x\in \partial\Omega,  \\
p(0,x) &= 0, &&  \quad x \in \Omega,
 \end{aligned}
\right.
\end{equation}
where $\Omega$ is an open bounded connected subset of $\RR^d$ with a regular boundary $\partial\Omega$. The function $g$ is positive and vanishes when $p=1$.
The derivation of \eqref{eq:pintro} will be detailed in Section \ref{sec:model}.

Let us summarize the main assumptions on $f$ and $g$ we will use in the sequel. 
\begin{equation}\label{assump:fg}\tag{$\mathcal{H}_{f,g}$}
\left\{\begin{array}{ll}
f\text{ is $C^2$ and of bistable type.} \\
\text{Denoting by $\theta$ the only root of $f$ in $(0,1)$, we assume that $f''(\cdot)>0$ on $(0,\theta)$.} \\
g\text{ is nonnegative, decreasing on }[0,1].\text{ Moreover, }g(1)=0.
\end{array}\right.
\end{equation} 

A first study was carried out in \cite{mosquiCemracs}, giving rise to the first very simple numerical experiments. In the present article, we seek to complete the results of this study, by analyzing qualitatively the solutions and by proposing adapted numerical strategies. Let us mention that a problem of the same nature has been investigated in \cite{miyaoka2019optimal}, mainly from a numerical point of view. Authors characterize optimal vaccination strategies to minimizes the costs associated with infections by the Zika virus and vaccines in the state of Rio Grande do Norte in Brazil.

In \cite{NadinToledo}, an optimal control problem close to the one investigated hereafter is tackled. The authors consider a population whose evolution is driven by a reaction-diffusion equation and look at determining initial data submitted to $L^1$ and $L^\infty$ constraints, maximizing the total size of the population. In our article, we choose to deal with a least square criterion instead of the average criterion considered in \cite{NadinToledo} (and more recently in \cite{Idriss2}). \blue{For reasons that will appear later, constant solutions are natural candidate to solve the considered optimal control problem. We show, as in \cite{NadinToledo}, that for certain families of parameters, the constant functions are local minimizers of the optimal control problem. On the other hand, we complete this first analysis and also manage to show for our model, that these same functions can be or not be global minimizers depending on the considered range of parameters. These results are also illustrated numerically.   }
Finally, it is worth mentioning that exact controllability issues for similar reaction-diffusion systems have been investigated in \cite{LeBalchRDCont,Idriss1}.

The outline of the paper is the following.
In Section \ref{sec:model}, we present the derivation of system \eqref{eq:pintro} and present the optimal control problem we are looking at.
Section \ref{sec:main} contains the main mathematical results of this paper.
Their proofs are given in Section \ref{sec:proofs}.
 More precisely, the rigorous derivation of system \eqref{eq:pintro} is explained in Section \ref{sec:modelreduc} and Section \ref{sec:OCPanalysis} is devoted to the mathematical study of the optimal control problem.
%
Finally, numerical illustrations with the description of the numerical algorithm are provided in Section \ref{sec:num}.

\section{Modelling}
\label{sec:model}

In the whole article, we will consider a given bounded connected open domain $\Omega$ of $\RR^d$ assumed to have a Lipschitz boundary. Let $T>0$ denote a fixed horizon of time.

\subsection{Model with two compartments}
In order to justify the introduced model on the proportion of \textit{Wolbachia}-infected mosquitoes, we first explain how to derive it.
Let us denote $n_{in}$ the density of infected mosquitoes and $n_{un}$ the density of uninfected mosquitoes. The dynamics of these quantities is governed by the reaction-diffusion system
\begin{subequations}
    \begin{empheq}[left=\empheqlbrace]{align}
&  (\pa_t  - D \Delta) n_{in} = (1-s_f) \frac{F_{un}}{\eps} n_{in} \left(1 - \frac{N}{K} \right) - \delta d_{un} n_{in} + u,  &\text{ in } (0,T)\times\Omega \label{eq:ni}\\
&  (\pa_t - D \Delta )n_{un} = \frac{F_{un}}{\eps} n_{un} \left(1 - s_h \frac{n_{in}}{N} \right) \left(1 - \frac{N}{K} \right) - d_{un} n_{un}, & \text{ in } (0,T)\times\Omega \label{eq:nu}\\
& N=n_{in} + n_{un} & \text{ in } (0,T)\times\Omega\nonumber \\
 &\pa_\nu n_{in} = 0, \quad \pa_\nu n_{un} = 0, &\text{ on } (0,T)\times\pa\Omega \nonumber
    \end{empheq}
\end{subequations}
complemented by initial conditions $n_{in}(t=0,x)=n_{in}^{\text{init}}(x)\michel{\geqslant 0}$, $n_{un}(t=0,x)=n_{un}^{\text{init}}(x)\michel{> 0}$,
where the following notations are used:
\begin{itemize}
  \item $u$: instantaneous releases of \textit{Wolbachia} infected mosquitoes. It is on this control that we will act upon. \red{At this step, we do not make the admissible space of controls precise, this will be done in what follows;}
  \item $d_{un}$, $d_{in}=\delta d_{un}$ with $\delta >1$: death rates, respectively for uninfected and  infected mosquitoes. We assume that \red{$d_{in}>d_{un}$} since \textit{Wolbachia} decreases lifespan;
  \item $F_{un}$, $F_{in}=(1-s_f)F_{un}$: net fecundity rates, respectively for uninfected and  infected mosquitoes. We assume that $F_{in}<F_{un}$ since \textit{Wolbachia} reduces fecundity;
  \item $\eps$ : parameter without dimension quantifying the fecundity, we assume $\eps\ll 1$ meaning that the fecundity is considered to be large;
  \item $s_h\in (0,1)$:  cytoplasmic incompatibility parameter (fraction of uninfected females' eggs fertilized by infected males which will not hatch). Formally, a proportion $1 - s_h$ of uninfected female's eggs fertilized by infected males actually hatch. Cytoplasmic incompatibility is perfect when $s_h = 1$;
  \item $K$: carrying capacity;
  \item $D$: dispersal coefficient. 
\end{itemize} 
All the constants above are assumed to be positive.
Existence and uniqueness of solutions for such reaction-diffusion system is by now well-known see e.g. \cite{Evans,Perthame}.
The equations driving the dynamics of $n_{in}$ and $n_{un}$ are bistable and monostable reaction-diffusion equations, respectively. Note that in the reaction term of \red{the second equation}, the term $- \frac{n_{in}}{n_{in}+n_{un}}$ stands for the vertical transition of the disease whereas the coefficient $s_h$ models that this vertical transmission may or not be perfect because of the cytoplasmic incompatibility.

In accordance with \cite{APSV2018}, we will assume moreover that the relation
\begin{equation}\label{rel:coef}
s_f + \delta - 1 < \delta s_h
\end{equation}
holds true. \red{It is notable that such a parameters choice is relevant since for {\it Wolbachia}-infected {\it Aedes} mosquitoes, and more precisely in the case of wMel strain, CI is almost perfect in these species-strain combination (see \cite{Dut.Lab}) meaning that $s_h$ is close to 1. Furthermore, such mosquitoes typically have a slightly reduced fecundity. In that particular case, one has $s_f\simeq 0.1$, $\delta\simeq 1.1$ and $s_h\simeq 0.9$ so that \eqref{rel:coef} holds true.}

To model optimal strategies with an adapted optimal control problem, it is convenient to introduce the {\it Wolbachia}-infected equilibrium $(n_{in,W}^*,0)$ \red{for the uncontrolled system,} defined by
\begin{equation}\label{eqn1Wstar}
(n_{in,W}^*,0):= \left(K(1-\frac{\eps\delta d_{un}}{F_{un}(1 - s_f)}),0\right),
\end{equation}
that is $(n_{in,W}^*,0)$ is a stationary solution of \eqref{eq:ni}--\eqref{eq:nu}.
A possible approach hence consists in looking for controls steering the system as close as possible to the target state $(n_{in,W}^*,0)$. In some sense, it stands for the research of a control strategy ensuring the persistence of infected mosquitoes at the time horizon $T$.

This leads to define the least squares functional $J_T$ given by
\red{\begin{equation}
  \label{eq:J}
  J_T(u) = \frac 12 \int_\Omega n_{un}(T,x)^2\, dx + \frac 12 \int_\Omega {(n_{in,W}^* - n_{in}(T,x))_+}^2\, dx,
\end{equation}}
where $(n_{in},n_{un})$ denotes the unique solution to the reaction-diffusion system \eqref{eq:ni}. \red{Here, we use the notation $x_+=\max\{x,0\}$. Observe that the presence of this maximum in the definition of $J_T$ does not induce non-differentiability since the mapping $x\mapsto x_+^2$ from $\RR$ to $\RR$ is $C^1$.}
\subsection{Reduction for large fecundity}\label{sec:asympModel}

When the fecundity is large compared to other parameters, it is relevant to consider the asymptotics $\eps\to 0$, which allows us to reduce system \eqref{eq:ni}--\eqref{eq:nu}. This reduction is inspired by \cite{APSV2018} where the authors consider a differential system.
We first explain formally how to reduce this system and state the main result, the rigorous approach is postponed to Section~\ref{sec:modelreduc}.
Since $n_{in}$ and $n_{un}$ will depend on $\eps$, we use the notation $n_{in}^\eps$ and $n_{un}^\eps$.

\paragraph{Formal reasoning.}
We investigate formally the limit as $\eps\to 0$ in the \eqref{eq:ni}--\eqref{eq:nu}. 
From \eqref{eq:ni}--\eqref{eq:nu}, we expect that $n_{in}^\eps+n_{un}^\eps = K + O(\eps)$. Then, we introduce the variables
$$
n^\eps = \frac 1\eps \left(1-\frac{n_{in}^\eps+n_{un}^\eps}{K}\right), \qquad
p^\eps = \frac{n_{in}^\eps}{n_{in}^\eps+n_{un}^\eps},
$$
where $p^\eps$ is the proportion of infected mosquitoes in the population.
Consider a sequence $(u^\eps)_{\eps>0}$ of controls. 
From straightforward computations from \eqref{eq:ni}--\eqref{eq:nu}, we deduce
\begin{align}
  \label{eq:n}
   & \pa_t n^\eps - D \Delta n^\eps = 
     -\frac{1-\eps n^\eps}{\eps}( F_{un} n^\eps (s_h (p^\eps)^2 - (s_f+s_h) p^\eps +1) - d_{un} ((\delta -1)p^\eps+1)) - \frac{u^\eps}{\eps K},  \\
  \label{eq:peps}
   & \pa_t p^\eps - D \Delta p^\eps + \frac{2\eps D}{1-\eps n^\eps} \nabla p^\eps\cdot\nabla n^\eps = p^\eps(1-p^\eps)(F_{un} n^\eps (s_h p^\eps - s_f) + (1-\delta) d_{un}) + \frac{u^\eps(1-p^\eps)}{K(1-\eps n^\eps)}.
\end{align}
Letting formally $\eps$ going to $0$, assuming that $(n^\eps, p^\eps, u^\eps)$ converges to $(n^0,p^0,u^0)$, we deduce from \eqref{eq:n} that the limit should satisfy the relation
\begin{equation}\label{eq:h}
n^0 = h(p^0,u^0) := \frac{d_{un}((\delta-1) p^0 + 1) - u^0/K}{F_{un}(s_h (p^0)^2 - (s_f+s_h) p^0 + 1)}.
\end{equation}
Then, passing into the limit in \eqref{eq:peps}, we deduce
$$
\pa_t p^0 - D \Delta p^0  = p^0 (1-p^0)(F_{un} n^0 (s_h p^0 - s_f) + (1-\delta) d_{un}) + \frac{u^0 (1-p^0)}{K}.
$$
Injecting \eqref{eq:h} into this latter equation, we obtain the scalar reaction-diffusion equation for the fraction of infected mosquitoes
\begin{equation}
  \label{eq:p0}
\left\{\begin{array}{ll}
\pa_t p^0 - D \Delta p^0 = f(p^0) + u^0 g(p^0) & \text{in }(0,T)\times \Omega\\
\partial_n p^0=0 & \text{on }(0,T)\times \pa\Omega
\end{array}\right.
\end{equation}
with
\begin{equation}
  \label{eq:fg}
  f(p) = \frac{\delta d_{un} s_h p(1-p)(p-\theta)}{s_h p^2 - (s_f+s_h) p + 1}, \qquad g(p) = \frac{(1-p)(1-s_h p)}{K(s_h p^2 - (s_f+s_h) p + 1)},
\end{equation}
where we use the notation $\theta = \frac{s_f + \delta - 1}{\delta s_h}$. 
Under the assumption \eqref{rel:coef} on the coefficients, we have $0<\theta<1$. Hence equation \eqref{eq:p0} for $u^0=0$ is a bistable reaction-diffusion equation.

\blue{
\begin{remark}\label{rmq:Hfg}
We claim that if the coefficients $s_f$ and $s_h$ satisfy 
\begin{equation}\label{cond1}
(s_f+s_h)^2<4s_h
\end{equation} 
and
\begin{equation}\label{cond2}
\delta(s_f+s_h-2)-s_f+1<0,
\end{equation} 
then the particular functions $f$ and $g$ given by \eqref{eq:fg} satisfy assumption~\eqref{assump:fg}.\\
Let us show it. Assuming that \eqref{cond1} holds true, we infer that $h(p):=s_h p^2 - (s_f+s_h) p + 1>0$ for all $p\in \RR$, which implies that $f$ is $C^2$ and bistable. Straightforward computations yield 
$$f''(p)=\frac{\delta d_{un}s_h\psi(p)}{2(1-s_f)h(p)^3},$$
where
$$
\psi(p)=p^{3} s_f(1-1/\delta) + p^{3} s_h - 3 p^{2} + \frac{1}{s_h}  + \frac{3 p^{2}}{\delta} - \frac{3 p}{\delta} + \frac{s_f}{\delta s_h} + \frac{1}{\delta} - \frac{1}{\delta s_h}.$$
Hence, $f''$ and $\psi$ share the same sign on $(0,1)$. 
Since $\delta>1$, $s_f\in(0,1)$ and $s_h\in(0,1)$,
one has  $\psi^{(3)}=6(\delta s_f+\delta s_h-s_f)/\delta\geq0$, i.e. $\psi''$ is increasing. 
We deduce that $\psi'$ is decreasing on a interval $(0,a)$ and increasing on $(a,1)$ with $a\in [0,1]$. Furthermore, one has $\psi'(0)=-3/\delta<0$ and $\psi'(1)<0$ if we assume \eqref{cond2}.
In that case, $\psi$ is decreasing on $(0,1)$. In addition
$\psi(0)=(\delta+s_f+s_h-1)/(\delta s_h)>0$,
thus, under \eqref{cond1}-\eqref{cond2}, we have $f''>0$ on $(0,\theta)$. We remark that $g'$ is negative on $(-1/\sqrt{s_h},1/\sqrt{s_h})$. Since $s_h\in(0,1)$, we deduce that $g$ decreases on $(0,1)$. Moreover $g(1)=0$ and therefore, $g$ is positive on $(0,1)$.
\end{remark}
}

  We introduce the notation $F$ for the antiderivative of $f$,
  $$
  F(p) = \int_0^p f(q)\,dq.
  $$ 
 \blue{In what follows, we will assume:
  \begin{equation}\label{eq:defthetac}
  \exists \theta_c \in (\theta,1)\quad \mid \quad F(\theta_c) = 0.
  \end{equation}
 This assumption is necessary to guarantee that invasion of the infected population may occur in space by local release. We will check that this assumption is satisfied for the particular choice of parameters we will consider for the numerical experiments in Remark~\ref{rk:assumpHfgver}. }


We consider system \eqref{eq:ni}--\eqref{eq:nu} with Neumann boundary conditions to model that the boundary acts as a barrier, and initial conditions satisfying
\begin{equation}
  \label{hyp:init}
\blue{  n_{in}^{\text{init},\eps} \in  L^\infty(\Omega), \quad 0\leq n_{in}^{\text{init},\eps}, \quad
  n_{un}^{\text{init},\eps} \in  L^\infty(\Omega), \quad 0< n_{un}^{\text{init},\eps}.}
\end{equation}
We assume also that the initial conditions are well-prepared, i.e.
\begin{equation}
  \label{eq:wellprepared}
  n_{in}^{\text{init},\eps} + n_{un}^{\text{init},\eps} = K + \eps K_0^\eps,\quad \text{ with }
  \|K_0^\eps\|_\infty \leq C.
\end{equation}
A typical example of initial conditions is when the system is as the Wolbachia-free equilibrium for which $n_{in}^{\text{init},\eps} = 0$ and $n_{un}^{\text{init},\eps} = K(1-\frac{\eps d_{un}}{F_{un}})$. In this case, assumption \eqref{eq:wellprepared} is obviously satisfied.

\paragraph{Convergence result.}
Following the ideas in \cite{Stru2016}, where a similar asymptotic limit is performed, we derive an asymptotic model on the proportion of infected mosquitoes, as the fecundity rates tend to $+\infty$. 
\begin{theorem}\label{TH}
  Under the assumptions \eqref{hyp:init}--\eqref{eq:wellprepared} on the initial data, let us assume moreover that the sequence $(u^\eps)$ converges towards $u^0$, weakly star in $L^\infty((0,T)\times\Omega)$ as $\eps\searrow 0$.
  Then, up to extraction of subsequences, the solution $(n^\eps,p^\eps)$ of \eqref{eq:n}--\eqref{eq:peps} converges towards $(n^0,p^0)$ as $\eps\to 0$, with \blue{$n^0\in  L^\infty((0,T)\times\Omega)$}, $p^0\in L^2(0,T;H^1(\Omega))$, and satisfying \eqref{eq:h} almost everywhere and \eqref{eq:p0}--\eqref{eq:fg} in the weak sense.
  More precisely, we have
  $$
  p^\eps \to p^0 \text{ strongly in } L^2((0,T)\times\Omega), \qquad
  n^\eps \rightharpoonup n^0 \text{weakly-star in } L^\infty((0,T)\times\Omega),
  $$
  where $p^0$ is solution to \eqref{eq:p0}   \end{theorem}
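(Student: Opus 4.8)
The plan is to establish the convergence result by combining uniform (in $\eps$) a priori estimates with compactness arguments, then identifying the limit through passage to the limit in the weak formulation. The key structural feature to exploit is the coupled system \eqref{eq:n}--\eqref{eq:peps}: equation \eqref{eq:peps} is a well-behaved (uniformly parabolic) equation for $p^\eps$ with reaction terms that are bounded provided $n^\eps$ stays bounded, while equation \eqref{eq:n} contains the singular $1/\eps$ factor that forces the algebraic constraint \eqref{eq:h} in the limit.

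\textbf{Step 1: Uniform bounds.} First I would establish that $0\le p^\eps\le 1$ by a maximum-principle / invariant-region argument: the reaction term in \eqref{eq:peps} has the factor $p^\eps(1-p^\eps)$ together with the nonnegative control contribution $u^\eps(1-p^\eps)/(K(1-\eps n^\eps))$, so the set $[0,1]$ is invariant. Next, and this is where the real work lies, I would derive a uniform $L^\infty$ bound on $n^\eps$. Rewriting \eqref{eq:n}, the singular term $-\tfrac{1-\eps n^\eps}{\eps}\Phi(n^\eps,p^\eps)$ (where $\Phi$ collects the bracketed expression) acts as a strong relaxation toward the zero set of $\Phi$, i.e. toward $n^\eps = h(p^\eps,u^\eps)$. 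A barrier/comparison argument on \eqref{eq:n}, using the sign of $\Phi$ for large $|n^\eps|$ and the well-prepared initial data \eqref{eq:wellprepared}, should yield $\|n^\eps\|_{L^\infty((0,T)\times\Omega)}\le C$ uniformly in $\eps$.

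\textbf{Step 2: Compactness.} With $p^\eps$ bounded in $L^\infty$ and the right-hand side of \eqref{eq:peps} then bounded in $L^2$, parabolic energy estimates give a uniform bound for $p^\eps$ in $L^2(0,T;H^1(\Omega))$ and for $\pa_t p^\eps$ in $L^2(0,T;H^{-1}(\Omega))$; the troublesome drift term $\tfrac{2\eps D}{1-\eps n^\eps}\nabla p^\eps\cdot\nabla n^\eps$ carries a factor $\eps$ and should be controlled (or shown to vanish) using the $H^1$ bound on $p^\eps$. An Aubin--Lions argument then furnishes strong convergence $p^\eps\to p^0$ in $L^2((0,T)\times\Omega)$ (up to a subsequence), while the uniform $L^\infty$ bound on $n^\eps$ gives weak-star convergence $n^\eps\rightharpoonup n^0$.

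\textbf{Step 3: Identification of the limit.} Multiplying \eqref{eq:n} by $\eps$ and passing to the limit forces $\Phi(n^0,p^0)=0$ almost everywhere, i.e. the algebraic relation \eqref{eq:h}, $n^0=h(p^0,u^0)$; the strong convergence of $p^\eps$ and weak-star convergence of $u^\eps$ are what let me pass to the limit in the nonlinear algebraic expression. Substituting this into the weak formulation of \eqref{eq:peps} and using strong $L^2$ convergence of $p^\eps$ (to handle the nonlinear reaction term $f$ and the product $u^\eps g(p^\eps)$, where weak-star convergence of $u^\eps$ against the strongly convergent $g(p^\eps)$ suffices) yields \eqref{eq:p0}--\eqref{eq:fg} in the weak sense. \textbf{The hard part will be} Step 1, the uniform $L^\infty$ control of $n^\eps$: the singular relaxation term is a double-edged sword, stabilizing $n^\eps$ near the constraint manifold but requiring a careful comparison-function construction to convert this heuristic into a rigorous $\eps$-independent bound, especially near $t=0$ where the well-prepared assumption must be invoked to keep $n^\eps$ from developing an initial layer that escapes the bound.
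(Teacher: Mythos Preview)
Your overall architecture matches the paper's proof closely: uniform $L^\infty$ bounds via comparison (Lemma~\ref{lem:Linf}), energy estimates, Aubin--Lions compactness, then passage to the limit in the weak formulations. Your identification of the $L^\infty$ bound on $n^\eps$ as the delicate point is accurate, and the paper handles it exactly as you suggest, via constant barriers built from $h(p,0)$ and $h(p,M)$.

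There is, however, one concrete missing ingredient. You write that the drift term $\tfrac{2\eps D}{1-\eps n^\eps}\nabla p^\eps\cdot\nabla n^\eps$ ``should be controlled (or shown to vanish) using the $H^1$ bound on $p^\eps$.'' That is not enough: you have no a priori control on $\nabla n^\eps$, so the factor $\eps$ alone does not kill the term. The paper first obtains the separate estimate
\[
\eps D\int_0^T\!\!\int_\Omega |\nabla n^\eps|^2\,dx\,dt \le C_0
\]
by multiplying \eqref{eq:n} by $\eps n^\eps$ and integrating (Lemma~\ref{lem:nrj}). This bound is used \emph{twice}: once, via Cauchy--Schwarz, to close the energy estimate for $\nabla p^\eps$ (the drift term contributes $\sqrt{\eps}\,\|\nabla p^\eps\|_{L^2}\cdot\sqrt{\eps}\,\|\nabla n^\eps\|_{L^2}$, which is now bounded and can be absorbed); and again to show the drift term vanishes in the limit. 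Without it the $L^2(0,T;H^1)$ bound on $p^\eps$ is circular, and your claimed $L^2(0,T;H^{-1})$ bound on $\pa_t p^\eps$ also fails---indeed the paper works in the dual of $H^1\cap L^\infty$ rather than $H^{-1}$ precisely because the drift term needs $\phi\in L^\infty$ to be estimated. So your Step~2 needs this additional energy estimate on $n^\eps$ inserted \emph{before} the estimate on $p^\eps$.
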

The proof of this theorem is postponed to Section~\ref{sec:modelreduc}.

\blue{Let us now define the least squares functional $J_T^\eps$ given by
\begin{equation}
  \label{eq:Jeps}
  J_T^\eps(u) = \frac 12 \int_\Omega n_{un}^\eps(T,x)^2\, dx + \frac 12 \int_\Omega {(n_{in,W}^* - n_{in}^\eps(T,x))_+}^2\, dx,
\end{equation}
where $(n_{in}^\eps,n_{un}^\eps)$ have been introduced at the very beginning of Section~\ref{sec:asympModel} and $n_{in,W}^*$ in \eqref{eqn1Wstar}.
As a corollary of the convergence result above, let us make the asymptotic behavior of the functional $J_T^\eps$ precise.
\begin{corollary}\label{cor:CVJTeps}
Under the assumptions \eqref{hyp:init}--\eqref{eq:wellprepared} on the initial data, let $(u^\eps)$ be a  sequence converging towards $u^0$, weakly star in $L^\infty((0,T)\times\Omega)$ as $\eps\searrow 0$.
Then, $ p^\eps(T,\cdot)$ converges towards $p^0(T,\cdot)$ strongly in $L^2(\Omega)$, and moreover, $J_T^\eps(u_\eps)$ converges towards $J_T^0(u^0)$ defined by
\begin{equation}\label{def:j022}
J^0_T(u) =K^2 \int_\Omega(1-p^0(T,x))^2\, dx,
\end{equation}
where $p$ denotes the solution of \eqref{eq:p0}, as $\eps\searrow 0$.
\end{corollary}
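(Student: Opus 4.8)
The plan is to reduce the statement to the behaviour of the time-$T$ slices and then to elementary algebra. By the very definitions of $n^\eps$ and $p^\eps$ recalled at the beginning of Section~\ref{sec:asympModel}, one has $n_{in}^\eps+n_{un}^\eps=K(1-\eps n^\eps)$ and hence the pointwise identities $n_{in}^\eps=Kp^\eps(1-\eps n^\eps)$ and $n_{un}^\eps=K(1-p^\eps)(1-\eps n^\eps)$ a.e. on $(0,T)\times\Omega$. Substituting these into \eqref{eq:Jeps} rewrites
\[
J_T^\eps(u^\eps)=\frac{K^2}{2}\int_\Omega(1-p^\eps(T))^2(1-\eps n^\eps(T))^2\,dx+\frac12\int_\Omega\big(n_{in,W}^*-Kp^\eps(T)(1-\eps n^\eps(T))\big)_+^2\,dx.
\]
The whole proof then rests on two ingredients: (i) the strong convergence $p^\eps(T,\cdot)\to p^0(T,\cdot)$ in $L^2(\Omega)$, and (ii) the fact that the remaining $\eps$-dependent factors converge to harmless constants.

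Ingredient (ii) is immediate from Theorem~\ref{TH}: the family $(n^\eps)$ is bounded in $L^\infty((0,T)\times\Omega)$, so $\eps n^\eps(T,\cdot)\to0$ and $(1-\eps n^\eps(T))^2\to1$ in $L^\infty(\Omega)$, while $n_{in,W}^*\to K$ by \eqref{eqn1Wstar}. Granting (i) and recalling $0\le p^0\le1$, I would pass to the limit termwise. In the first integral, $(1-p^\eps(T))^2\to(1-p^0(T))^2$ in $L^1(\Omega)$ (square of a uniformly bounded $L^2$-convergent sequence) and the second factor tends to $1$ boundedly, giving $\tfrac{K^2}{2}\int_\Omega(1-p^0(T))^2$. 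For the second integral, $n_{in,W}^*-Kp^\eps(T)(1-\eps n^\eps(T))\to K(1-p^0(T))$ in $L^2(\Omega)$; since $s\mapsto s_+$ is $1$-Lipschitz and $1-p^0(T)\ge0$, the positive part converges in $L^2$ to $K(1-p^0(T))$ and its square in $L^1$ to $K^2(1-p^0(T))^2$. Summing the two contributions yields exactly $J_T^0(u^0)=K^2\int_\Omega(1-p^0(T))^2$, i.e. \eqref{def:j022}.

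The real difficulty, and the main obstacle, is ingredient (i): Theorem~\ref{TH} only gives strong convergence of $p^\eps$ in $L^2((0,T)\times\Omega)$, which does not control a single time slice (strong $H^{-1}$ convergence of the slice together with a mere $L^\infty$ bound cannot rule out spatial oscillations, hence does not imply strong $L^2$ convergence). I would therefore first secure uniform-in-$\eps$ temporal compactness. Here a useful observation is that, since $1-\eps n^\eps=(n_{in}^\eps+n_{un}^\eps)/K$, the correction term in \eqref{eq:peps} recombines with the Laplacian into a uniformly elliptic divergence-form operator,
\[
-D\Delta p^\eps+\frac{2\eps D}{1-\eps n^\eps}\nabla p^\eps\cdot\nabla n^\eps=-\frac{D}{(1-\eps n^\eps)^2}\,\dv\!\big((1-\eps n^\eps)^2\nabla p^\eps\big),
\]
whose coefficient is bounded above and below uniformly in $\eps$. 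Combined with the bound on $p^\eps$ in $L^2(0,T;H^1(\Omega))$ and the $L^\infty$ bound on the reaction and control terms, this yields (as in the proof of Theorem~\ref{TH}) a uniform bound on $\partial_t p^\eps$ in $L^1(0,T;H^{-s}(\Omega))$ for $s>d/2$, so that the Aubin--Lions--Simon lemma gives relative compactness of $(p^\eps)$ in $C([0,T];H^{-s}(\Omega))$ and thus $p^\eps(T,\cdot)\to p^0(T,\cdot)$ in $H^{-s}(\Omega)$.

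To upgrade this to strong $L^2(\Omega)$ convergence at $t=T$ I would use parabolic smoothing away from the initial time. Since $0\le p^\eps\le1$, the right-hand side is bounded in $L^\infty$ and the diffusion is uniformly elliptic, interior-in-time parabolic estimates give a uniform bound on $(p^\eps)$ in $L^\infty(\tau,T;H^1(\Omega))$ for any fixed $\tau\in(0,T)$; applying Aubin--Lions--Simon on $[\tau,T]$ with $H^1\hookrightarrow\hookrightarrow L^2\hookrightarrow H^{-s}$ yields relative compactness in $C([\tau,T];L^2(\Omega))$. Any limit point must coincide with $p^0$ by the already-known $L^2((0,T)\times\Omega)$ convergence (and the full sequence converges since $p^0$, the solution of \eqref{eq:p0}, is unique), whence $p^\eps(T,\cdot)\to p^0(T,\cdot)$ strongly in $L^2(\Omega)$. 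The delicate point throughout is to make the above a priori estimates genuinely uniform in $\eps$ despite the singular structure of \eqref{eq:peps}; this is precisely where I expect to reuse the bounds established in the proof of Theorem~\ref{TH} rather than to redo them.
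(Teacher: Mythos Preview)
Your reduction of $J_T^\eps$ to $p^\eps(T)$ and $\eps n^\eps(T)$ and the subsequent limit passage (your ingredient~(ii) and the termwise argument) match the paper's treatment essentially verbatim. The substantive difference is in ingredient~(i), the strong $L^2(\Omega)$ convergence of $p^\eps(T,\cdot)$. The paper does \emph{not} invoke parabolic smoothing or compactness in $C([\tau,T];L^2)$. Instead it argues in two steps: first, by enlarging the class of test functions in the variational formulation to allow $\phi(T,\cdot)\neq0$, it obtains weak $L^2(\Omega)$ convergence of $p^\eps(T,\cdot)$ to $p^0(T,\cdot)$; second, it multiplies \eqref{eq:peps} by $p^\eps$, integrates over $(0,T)\times\Omega$, and passes to the $\limsup$, using the weak lower semicontinuity of $\int_0^T\!\!\int_\Omega|\nabla p^\eps|^2$ to deduce $\limsup_\eps\|p^\eps(T)\|_{L^2}^2\le\|p^0(T)\|_{L^2}^2$. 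Weak convergence plus norm convergence then yields strong convergence. This energy argument uses nothing beyond the a~priori bounds of Lemmas~\ref{lem:Linf}--\ref{lem:nrj}.

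Your route is conceptually sound but has a soft spot at the step ``interior-in-time parabolic estimates give a uniform bound on $(p^\eps)$ in $L^\infty(\tau,T;H^1(\Omega))$''. After your (correct) recombination, the diffusion coefficient is $(1-\eps n^\eps)^2$, which is uniformly elliptic but merely $L^\infty$ in $(t,x)$, with no uniform control on $\partial_t\big((1-\eps n^\eps)^2\big)$ (equation~\eqref{eq:n} is singular in $\eps$). The standard $L^\infty_tH^1_x$ estimate obtained by testing with $\partial_t p^\eps$ then produces an uncontrolled term $\tfrac12\int(\partial_t a)|\nabla p^\eps|^2$, so the claimed bound is not automatic. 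The strategy can be rescued---for instance via De~Giorgi--Nash--Moser, which for divergence-form parabolic operators with $L^\infty$ coefficients gives uniform interior H\"older bounds, hence compactness in $C([\tau,T];L^2)$ by Arzel\`a--Ascoli---but this is heavier machinery than what you wrote. The paper's energy method sidesteps the issue entirely and is the more economical argument here.
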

The proof of this result is postponed to Section~\ref{proof:cor:CVJTeps}.}

\blue{In what follows, we will rather deal with the proportion $p^0$ to model optimal releases strategies. The following section is dedicated to modeling issues about the optimal control problem we will deal with. }

\subsection{Toward an optimal control problem}\label{sec:OCP}
In this section, we will introduce an optimal control problem modeling optimal mosquito releases. For this purpose, we assume all fecundity rates large, which legitimates the use of the asymptotic model \eqref{eq:p0}--\eqref{eq:fg} introduced in Section~\ref{sec:asympModel}.. We will focus on time-pulsed releases, which will lead us to further simplify the problem.

\blue{In order not to cumulate all the difficulties related to the search for release distributions in time and space, we will suppose that one release, which is an impulse in time\footnote{We consider Dirac measures since at the time-level of the study (namely, some generations), the release can be considered as instantaneous.}, is done at the beginning of the experiment, i.e. $u(t,x)$ will be assimilated to a particular approximation of a Dirac impulse in time, namely $u_0(x) \delta_{\{t=0\}}$.\\
More precisely, we will consider as choice of release term, the function
$$
u^0(t,x)=\frac{1}{\eta}\mathds{1}_{[0,\eta]}(t)u_0(x),
$$
where $u_0\in L^\infty(\Omega)$ will be given. Making the change of variable $t=\tau \eta$, and introducing $\tilde{p}$ given by $\tilde{p}(\tau,x)=p^0(t,x)$, one gets from system \eqref{eq:p0} that $\tilde{p}$ solves
$$
\frac{\partial \tilde{p}}{\partial \tau} - \eta D\Delta \tilde{p} = \eta f(\tilde{p}) + u_0 g(\tilde{p}),
\qquad \tau \in [0,1], \ x\in \Omega.
$$
We now provide a purely formal argument to justify the optimal control problem we will deal with.
}
Letting formally $\eta$ go to $0$ and denoting, with a slight abuse of notation, still by $\tilde{p}$ the formal limit of the system above yields
\begin{equation}\label{eq:ptilde}
\frac{\partial \tilde{p}}{\partial \tau}(\tau,x) = u_0(x) g(\tilde{p}(\tau,x)), \qquad \tau \in [0,1], \ x\in \Omega.
\end{equation}
Let us denote $G$ the anti-derivative of $1/g$ vanishing at $0$, namely
$$
G(p) = \int_0^p \frac{dq}{g(q)}.
$$
Then, by a direct integration of \eqref{eq:ptilde} on $[0,1]$, we obtain
$$
G(\tilde{p}(1,x)) = G(\tilde{p}(0,x))+u_0(x), \qquad x\in \Omega .
$$
Hence we arrive at the system
\begin{equation}\label{eq:psimple}
 \left\{
 \begin{aligned}
\displaystyle \frac{\partial p}{\partial t} - D\Delta p &= f(p), &&\quad t\in (0,T), \quad x\in \Omega, \\
\partial_\nu p(t,x) &= 0, && \quad x\in \partial\Omega,  \\
p(0^+,\cdot) &= G^{-1}(u_0(\cdot)),
\end{aligned}
\right.
\end{equation}
where $f$ and $g$ are given by \eqref{eq:fg}.

\blue{According to \eqref{assump:fg}, $G(0) = 0$, $G(1^-) = +\infty$, $G$ is continuous in $[0, 1)$ and strictly increasing, $G^{-1}(u_0)$ is well defined and in $[0,1)$ for positive $u_0$. Moreover $0$ and $1$ are subsolution and uppersolution to \eqref{eq:psimple}, hence thanks to a standard comparison argument for parabolic systems, the solution $p$ to System~\eqref{eq:psimple} satisfies $0 \leq p(t,x) < 1$ for a.e. $t \in[0,T]$ and $x \in \Omega$ 
(see e.g. \cite{conway1977comparison}).
}


To take into account biological constraints on the release procedure, we will moreover assume that the release function is such that:
\begin{itemize}
\item the local release of mosquitoes is bounded : $0\leq u_0 \leq M$ a.e. in $\Omega$ with $M>0$;
\item the total number of used mosquitoes is bounded (production limitation), reading 
$$
\int_\Omega u_0(x)\,dx \leq C,
$$
with $C\in (0,MT)$. \red{Note that it is relevant to choose the parameter $C$ strictly lower than $MT$. In the converse case, it would mean that the choice $u_0(\cdot)=M$ is admissible, so that the local maximal number of mosquitoes can be released (almost) everywhere in $\Omega$. Since producing infected mosquitoes has an important cost, it is reasonable from a biological point a view to assume that such a release is not possible.}
\end{itemize}
This leads to introduce the admissible set $ \mathcal{V}_{C,M} $ given by
\begin{equation*}
\mathcal{V}_{C,M} = \left\{u_0\in L^\infty(\Omega), 0\leq u_0\leq M \text{ a.e. in }\Omega, \ \int_\Omega u_0(x)\,dx \leq C\right\}.
\end{equation*}

The goal is to be as near as possible to the equilibrium $p=1$ at time $T$. 
Let us denote (with a slight abuse of notation) by $J_T$, the least squares functional defined by
$$
J_T(u_0)=\frac12\int_\Omega (1-p(T,x))^2\,dx.
$$
Observe that coincides, up to a positive multiplicative constant, with the asymptotic functional $J^0_T$ given by \eqref{def:j022}. The optimization problem thus reads
\[\label{prob:reduced}
\boxed{\inf_{u_0\in \mathcal{V}_{C,M} } J_T(u_0)},
\tag{$\mathcal{P}_{\text{reduced}}$}
\]
where $p$ is the solution of \eqref{eq:psimple}.

From now on and without loss of generality, we will assume in what follows that the diffusion coefficient $D$ is equal to 1. 

\section{Main results}
\label{sec:main}
Constant solutions are natural candidates to solve Problem~\eqref{prob:reduced}. Indeed, it has been observed in \cite[Theorem 2.1]{mosquiCemracs} that in the very simple case where $f(\cdot)=0$ and $G:x\mapsto x$, Problem~\eqref{prob:reduced} has a unique solution $u_0$, which is constant and equal to $\min \big(1, M, \frac{C}{\lvert \Omega \rvert} \big)$. Furthermore, as stated in the following result, constant solutions equal to $M$ are optimal for a given range of the parameters.
We show moreover that, outside of this range, constant functions remain critical points and show that they are still local minimizers whenever $C$ is small enough. We also comment on the sharpness of this result by highlighting that for certain parameters, constant functions may not be global minimizers for Problem~\eqref{prob:reduced}.

According to Corollary~\ref{cor:0959}, it is enough to concentrate on the constant function equal to $C/|\Omega|$.

\begin{theorem}\label{prop1.1}
Let us assume that $f$ and $g$ satisfy \eqref{assump:fg}. Problem~\eqref{prob:reduced} has a solution.
\begin{itemize}
\item[(i)] For every $M \in (0,C/|\Omega |]$, the constant function $\bar u_M$
 equal to $M$ is the unique solution to Problem~\eqref{prob:reduced}.
\item[(ii)] Let us assume that $M|\Omega| >C$. The constant function $\bar u(\cdot)=C/|\Omega|$ is a critical point for Problem~\eqref{prob:reduced} (meaning that it satisfies the first order optimality conditions stated in Proposition~\ref{theo:1orderopt}). 

Furthermore, if $C\leq |\Omega|G(\theta)$, there exists $K_T>0$ such that for every $h\in L^2(\Omega)$, the second order differential of $J_T$ at $\bar u$ satisfies
$$
d^2J_T(\bar{u})(h,h) \geq K_T\Vert h \Vert_{L^2(\Omega)}^2
$$
and it follows that the function $\bar u$ is a local minimizer for Problem~\eqref{prob:reduced}.
\end{itemize}
\end{theorem}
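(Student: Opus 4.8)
My plan is to first recast Problem~\eqref{prob:reduced} in terms of the initial datum $p_0 = G^{-1}(u_0)$ rather than $u_0$ itself. Since $g$ is positive and decreasing, $G$ is continuous, strictly increasing and \emph{convex}, so the admissible set becomes
\[
\widetilde{\mathcal{V}} = \Big\{ p_0 \in L^\infty(\Omega)\ :\ 0 \le p_0 \le G^{-1}(M)\ \text{a.e.},\ \int_\Omega G(p_0)\,dx \le C\Big\},
\]
which is convex, bounded in $L^\infty$, and weak-$\ast$ closed (the mass constraint is weak-$\ast$ lower semicontinuous because $p_0 \mapsto \int_\Omega G(p_0)$ is convex and strongly continuous). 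I would then apply the direct method: take a minimizing sequence, extract a weak-$\ast$ limit $\pi$ of the initial data $p_0^n = G^{-1}(u_0^n)$. The states $p^n$ are uniformly bounded ($0 \le p^n < 1$ by the comparison argument recalled after \eqref{eq:psimple}), so $f(p^n)$ is bounded and, by parabolic energy estimates together with Aubin--Lions, $p^n \to p$ strongly in $L^2$; passing to the limit in the weak formulation identifies $p$ as the solution issued from $\pi$. Strong convergence of $p^n(T,\cdot)$ follows from parabolic smoothing for $t$ bounded away from $0$. Finally $\hat u_0 := G(\pi)$ is admissible (box bounds survive the weak-$\ast$ limit; the mass constraint by the convex lower semicontinuity above) and attains the infimum, giving a solution.

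\textbf{Part (i): optimality of $\bar u_M\equiv M$ by monotonicity.} When $M \le C/|\Omega|$ one has $M|\Omega| \le C$, so $\bar u_M \equiv M$ belongs to $\mathcal{V}_{C,M}$ and is its \emph{pointwise maximal} element. Since $G^{-1}$ is increasing and the semiflow of \eqref{eq:psimple} is order preserving (parabolic comparison), any admissible $u_0 \le \bar u_M$ produces $p(T,\cdot) \le \bar p(T,\cdot) < 1$ pointwise, whence $(1-p(T))^2 \ge (1-\bar p(T))^2$ and $J_T(u_0) \ge J_T(\bar u_M)$; thus $\bar u_M$ is optimal. For uniqueness, equality of the costs forces $(1-p(T))^2 = (1-\bar p(T))^2$ a.e.\ with $p(T) \le \bar p(T)$, hence $p(T,\cdot) = \bar p(T,\cdot)$; the strong maximum principle (strict comparison unless the data coincide) then yields $p_0 = \bar p_0$ a.e., i.e.\ $u_0 = \bar u_M$.

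\textbf{Part (ii): the critical point and the second differential.} For $M|\Omega| > C$ the constant $\bar u \equiv C/|\Omega|$ satisfies $0 < \bar u < M$ and $\int_\Omega \bar u = C$, so only the mass constraint is active. Writing $p_0 = G^{-1}(\bar u)$, I would compute the differential through the linearized state $\dot p$ (solving $\partial_t \dot p - \Delta \dot p = f'(p)\dot p$, $\dot p(0)=g(p_0)h$) and the adjoint $q$ (solving $-\partial_t q - \Delta q = f'(p)q$, $q(T) = -(1-p(T))$), obtaining $dJ_T(u_0)(h) = \int_\Omega g(p_0)\,q(0)\,h$. At $\bar u$ every quantity is $x$-independent: $\bar p$ solves $\dot{\bar p}=f(\bar p)$, $q(t) = q(0)/E(t)$ with $E(t)=\exp\!\big(\int_0^t f'(\bar p)\big)$, and $q(0) = -(1-\bar p(T))E(T) < 0$. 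Hence $\nabla J_T(\bar u)$ is the negative constant $g(p_0)q(0)$, which is exactly the first-order configuration of Proposition~\ref{theo:1orderopt} with a positive multiplier carried by the active mass constraint. Differentiating once more and using the adjoint to eliminate $\ddot p$, I get
\[
d^2 J_T(\bar u)(h,h) = \underbrace{\int_\Omega \dot p(T)^2}_{(\mathrm{I})\ \ge 0} \;+\; \underbrace{q(0)\,g'(p_0)\,g(p_0)\!\int_\Omega h^2}_{(\mathrm{II})} \;+\; \underbrace{\int_0^T\!\!\int_\Omega q\,f''(\bar p)\,\dot p^2}_{(\mathrm{III})}.
\]
Since $q(0)<0$, $g'(p_0)<0$, $g(p_0)>0$, term $(\mathrm{II})$ is coercive, equal to $\kappa\|h\|_{L^2}^2$ with $\kappa>0$.

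\textbf{Closing the coercivity (the hard part).} The obstacle is that $(\mathrm{III}) \le 0$: indeed $q<0$ and, crucially, $f''(\bar p(t)) \ge 0$. This is precisely where $C \le |\Omega|G(\theta)$ is used, for it is equivalent to $p_0 = G^{-1}(C/|\Omega|) \le \theta$, so that $\bar p$ is nonincreasing and stays in $(0,\theta]$, where $f''>0$ by \eqref{assump:fg}. To beat $(\mathrm{III})$ with $(\mathrm{II})$, set $\phi(t)=\int_\Omega \dot p(t)^2$; an energy computation shows $\phi/E^2$ is nonincreasing, so $\phi(t)/E(t) \le E(t)\phi(0)=E(t)g(p_0)^2\|h\|_{L^2}^2$. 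Observing that $f(\bar p(t))$ and $E(t)$ solve the same linear ODE gives the explicit identity $E(t)=f(\bar p(t))/f(p_0)$, hence
\[
\int_0^T f''(\bar p(t))\,E(t)\,dt = \frac{f'(p_0)-f'(\bar p(T))}{|f(p_0)|},
\]
and therefore, discarding $(\mathrm{I})\ge 0$,
\[
d^2 J_T(\bar u)(h,h) \ \ge\ |q(0)|\,g(p_0)\Big(|g'(p_0)| - g(p_0)\,\tfrac{f'(p_0)-f'(\bar p(T))}{|f(p_0)|}\Big)\,\|h\|_{L^2}^2 .
\]
The decisive step is to show the bracket is strictly positive; I expect this to be the genuine difficulty, relying on $p_0\le\theta$ (so that $f'$ is increasing on $(0,\theta)$ and $f(p_0)<0$, fixing all signs) together with the explicit forms \eqref{eq:fg} of $f$ and $g$, and yielding a constant $K_T>0$. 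Once coercivity is established, local minimality follows from a second-order Taylor expansion: for admissible perturbations $\int_\Omega h \le 0$, so $dJ_T(\bar u)(h)=g(p_0)q(0)\int_\Omega h \ge 0$, and the $\tfrac{K_T}{2}\|h\|_{L^2}^2$ term dominates. The only subtlety there is the standard two-norm discrepancy (differentiability in $L^\infty$ versus coercivity in $L^2$), handled by the usual argument for parabolic optimal control problems.
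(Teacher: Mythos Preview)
Your treatment of existence and of part~(i) follows the paper's argument essentially verbatim: the same weak-$\ast$/Aubin--Lions compactness for existence (your reparametrization by $p_0=G^{-1}(u_0)$ is a cosmetic convenience), and the same parabolic comparison for the optimality and uniqueness of $\bar u_M$.

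The real divergence is in part~(ii), and it hinges on a single sign. In the paper's expression for the second differential (Proposition~\ref{prop:adjoint}), the time-integral term carries a \emph{minus} sign:
\[
d^2 J_T(u_0)(h,h)=\int_\Omega \dot p(T)^2\,dx\;-\;\int_0^T\!\!\int_\Omega f''(p)\,q\,\dot p^2\,dx\,dt\;+\;\int_\Omega q(0)\,(G^{-1})''(u_0)\,h^2\,dx .
\]
With that sign, since $q<0$ and (when $C\le|\Omega|G(\theta)$) $\bar p$ stays in $(0,\theta]$ where $f''>0$, the integral $-\int f''(\bar p)\,\bar q\,\dot p^2$ is \emph{nonnegative}. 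All three pieces are then favorable, and the paper simply drops the first two and reads off the uniform lower bound $K_T=\bar q(0)(G^{-1})''(c)>0$ directly. There is no ``hard part'' in the paper's argument.

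Your formula carries the opposite sign on that term, which is precisely why you are forced into the $\phi/E^2$ energy estimate and the explicit ODE identity $E(t)=f(\bar p(t))/f(p_0)$, arriving at a bracket you cannot close from \eqref{assump:fg} alone. So the whole difficulty you identify is created by that sign. You should redo the duality computation that eliminates $\ddot p$ (equivalently, the one pairing $\dot q$ with $\dot p$) and track the sign of $\int_0^T\!\int_\Omega q\,f''(p)\,\dot p^2$ carefully. If after rechecking you still obtain a plus sign, note that a one-mode ODE test (take $g$ nearly constant so $(G^{-1})''\approx 0$, $f(p)=p(1-p)(p-\tfrac12)$, $p_0=0.4$, and $T$ with $\bar p(T)=0.2$) gives a negative Hessian in the constant direction; in that case the claim ``for every $h\in L^2$'' would be too strong under \eqref{assump:fg} alone, and local minimality would have to be argued on the tangent space $\{\int_\Omega h=0\}$ of the active mass constraint (where the offending mode $n=1$ is absent), rather than via full coercivity.
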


Let us comment on the sharpness of Theorem~\ref{prop1.1}.
As will be emphasized hereafter and in Section~\ref{sec:num}, we do not expect that $\bar u$ solves Problem~\eqref{prob:reduced} for all values of $C\in (|\Omega|G(\theta), M|\Omega|)$. In some case, this will be confirmed numerically, by using $\bar u$ as starting point of optimization algorithms and obtain at convergence a nonconstant minimizer $\tilde u$ such that $J_T(\tilde u)<J_T(\bar u)$.  

  Actually, even in the case $C<\min\{G(\theta),M\} |\Omega|$, where we know from Theorem~\ref{prop1.1} that the constant solution $\bar{u}$ is a local minimizer, under some conditions on $|\Omega|$ and $C$, we may construct non constant initial date $u_0$ such that $J_T(u_0)<J_T(\bar{u})$, as stated below in Proposition~\ref{prop:cstpasoptim}.
  
Recalling that $\theta_c\in (\theta,1)$ is defined by $\int_0^{\theta_c} f(p)\,dp = 0$. We assume
\begin{equation}
  \label{eq:assumptionCLM}
  G(\theta_c)<M  \quad \text{ and } \quad C < |\Omega| G(\theta).  
\end{equation}
The following result shows that under some conditions on $\Omega$, $M$ and $C$, the constant function $\bar{u}$ is not a global minimum of the optimization problem~\eqref{prob:reduced}.
\begin{proposition}\label{prop:cstpasoptim1}
  Let us assume \eqref{eq:assumptionCLM} and that:
  \begin{itemize}
  \item $C$ is large enough;
  \item the inradius\footnote{In other words, the radius of the largest ball inscribed in $\Omega$.} of $\Omega$ is large enough; 
  \item $T$ is large enough.
 \end{itemize} Then the constant solution $\bar{u}$ is not a global minimum for Problem~\eqref{prob:reduced}.
\end{proposition}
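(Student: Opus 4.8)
The plan is to exhibit a single admissible competitor that beats the constant candidate $\bar u$ once $C$, the inradius and $T$ are large. By Corollary~\ref{cor:0959} we may take $\bar u \equiv C/|\Omega|$. Since $C<|\Omega|G(\theta)$ and $G$ is increasing, the associated initial datum from \eqref{eq:psimple} is the spatial constant $p_0:=G^{-1}(C/|\Omega|)<\theta$. A spatially constant function solving $\dot p=f(p)$ automatically solves \eqref{eq:psimple} (its Laplacian and its normal derivative vanish), so by uniqueness the solution stays constant in space and is governed by $\dot p=f(p)$ with $p(0)=p_0<\theta$. As $f<0$ on $(0,\theta)$ by \eqref{assump:fg}, $p(t)$ is nonincreasing and remains $<\theta$, whence
$$ J_T(\bar u)=\tfrac12\,|\Omega|\,(1-p(T))^2\ \ge\ \tfrac12\,|\Omega|\,(1-\theta)^2\ =:\ m\ >\ 0 \qquad\text{for every }T>0. $$
It therefore suffices to build $u_0\in\mathcal V_{C,M}$ with $J_T(u_0)<m$ for some large $T$.

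Next I would concentrate the whole release on a ball. Let $\rho$ be the inradius of $\Omega$ and $B(x_0,\rho)\subset\Omega$ an inscribed ball, and set $u_0:=M\,\mathds{1}_{B(x_0,R)}$ for a radius $R$ to be fixed. Admissibility requires $R\le\rho$ and $M\,\omega_d R^d=\int_\Omega u_0\le C$, where $\omega_d=|B(0,1)|$; since the inradius and $C$ are assumed large, $R$ may be taken larger than any prescribed threshold. By \eqref{eq:psimple} the corresponding initial datum equals $G^{-1}(M)$ on $B(x_0,R)$ and $0$ elsewhere, and assumption \eqref{eq:assumptionCLM} gives $M>G(\theta_c)$, hence $h:=G^{-1}(M)\in(\theta_c,1)$. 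In other words, the competitor starts from a plateau of height $h>\theta_c$ supported on a large ball.

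The heart of the argument is to show that such an over-threshold plateau invades the whole bounded domain. First note that $F(\theta_c)=0$ together with $f>0$ on $(\theta,1)$ forces $F(1)=\int_{\theta_c}^1 f>0$, so the planar bistable travelling wave connecting $1$ to $0$ has positive speed $c>0$ and invasion is energetically favourable. Following the classical expanding-front constructions of Aronson–Weinberger and Fife–McLeod, I would build from the plateau a radial subsolution whose front $\{|x-x_0|=r(t)\}$, with $r(t)\to+\infty$, stays below the true solution $p$; the largeness of $R$ guarantees that the bubble exceeds the invasion threshold, so that the front expands rather than collapses. Because $\Omega$ is bounded, there is a time $t_1$ after which this subsolution exceeds $\theta$ on all of $\overline\Omega$, i.e. $\inf_\Omega p(t_1,\cdot)>\theta$. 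From that instant I compare $p$ with the spatially constant subsolution $q$ solving $\dot q=f(q)$, $q(t_1)=\inf_\Omega p(t_1,\cdot)>\theta$: as an exact space-independent solution of \eqref{eq:psimple} it is admissible in the comparison principle, and since $f>0$ on $(\theta,1)$ one has $q(t)\nearrow1$. Hence $p(t,\cdot)\ge q(t)\to1$ uniformly on $\Omega$, so that $J_T(u_0)=\tfrac12\int_\Omega(1-p(T))^2\,dx\to0$ as $T\to\infty$.

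Putting the pieces together, for $T$ large enough we obtain $J_T(u_0)<m\le J_T(\bar u)$, which proves that $\bar u$ is not a global minimiser. The three largeness hypotheses enter precisely here: $C$ large and the inradius large provide, respectively, the budget and the room to ignite an over-threshold bubble on a ball of super-threshold radius, while $T$ large leaves the invasion front enough time to sweep across the bounded domain. I expect the main obstacle to be the rigorous propagation step on a bounded domain under the Neumann condition, namely turning the localized bubble into the uniform estimate $\inf_\Omega p(t_1,\cdot)>\theta$. The delicate point is the behaviour of the expanding subsolution near $\partial\Omega$: one must check that the no-flux boundary does not impede the front (morally it can only help, being insulating), which for a general domain requires either a reflection or enlarged-ball comparison, or a convexity-type argument—in the spirit of Casten–Holland and Matano, nonconstant Neumann steady states are unstable on convex domains, ruling out the front stalling at an intermediate state. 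Once $p$ has been pushed above $\theta$ everywhere, the final convergence to $1$ is the elementary space-independent comparison described above.
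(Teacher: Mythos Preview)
Your strategy differs from the paper's and is considerably harder. The paper avoids any propagation analysis by invoking a \emph{stationary} radial subsolution: for $\alpha\in(\theta_c,G^{-1}(M)]$ there exists a compactly supported profile $w_\alpha=\max\{0,v_\alpha(|x-O_\alpha|)\}$, where $v_\alpha$ solves $-\tfrac{d-1}{r}v'-v''=f(v)$, $v(0)=\alpha$, $v'(0)=0$, supported in a ball $\overline{B(O_\alpha,R_\alpha)}\subset\Omega$. Choosing $u_0\in\mathcal V_{C,M}$ with $G^{-1}(u_0)\ge w_\alpha$ (possible once $C\ge C_\alpha:=\int_\Omega G(w_\alpha)$ and the inradius exceeds $R_\alpha$), the comparison principle against this \emph{time-independent} subsolution yields $p(t,\cdot)\ge w_\alpha$ for all $t\ge0$, hence $J_T(u_0)\le\tfrac12\int_\Omega(1-w_\alpha)^2\,dx<\tfrac12|\Omega|$. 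Since $\bar p(T)\to 0$ one has $J_T(\bar u)=\tfrac12|\Omega|(1-\bar p(T))^2\to\tfrac12|\Omega|$, and the conclusion follows for $T$ large. No front dynamics, no boundary interaction.

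Your route instead aims at full invasion $p\to1$, which is stronger than needed. The step you yourself flag as ``the main obstacle'' is a genuine gap: the Aronson--Weinberger/Fife--McLeod expanding subsolutions live on $\RR^d$ and stop being subsolutions on $\Omega$ the moment their support reaches $\partial\Omega$, since the radial profile does not satisfy the Neumann condition there. Repairing this requires either a reflection construction adapted to the geometry of $\Omega$ or an $\omega$-limit argument ruling out nonconstant Neumann equilibria, neither of which is straightforward for a general Lipschitz domain. The paper's stationary subsolution sidesteps the issue entirely because $w_\alpha$ is compactly supported in $\Omega$ and never touches the boundary. Note also that your lower bound $J_T(\bar u)\ge\tfrac12|\Omega|(1-\theta)^2$ would \emph{not} suffice to conclude with the paper's competitor, since $\tfrac12\int_\Omega(1-w_\alpha)^2$ is close to $\tfrac12|\Omega|$ when the ball is small relative to $\Omega$; this is why the paper exploits $\bar p(T)\to0$ rather than merely $\bar p(T)<\theta$.
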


A proof of this result is provided in Section~\ref{constant:nonglobal}.

\begin{remark}
  The conditions stated in Proposition \ref{prop:cstpasoptim1} are not sharp; the obtention of necessary and sufficient condition for constant solution to be a global minimizers seems to be intricate and we let it open.
  A related problem concerns the issue of finding sufficient and necessary conditions guaranteeing invasion in a bistable reaction-diffusion system that is, up to our knowledge still open, and we refer to \cite{NadinToledo} for partial answers in this direction.
\end{remark}

\begin{remark}
It is notable that, for the sets of parameters from \cite{data} below, the functions $f$ and $g$ satisfy \eqref{assump:fg}. Indeed, 
the function $f''$ vanishes once on $(0,1)$ and its root $z$ satisfies $\theta < z \simeq 0.466$, while $\theta_c \simeq 0.582$  (see Figure \ref{zeros_fonction_f}).

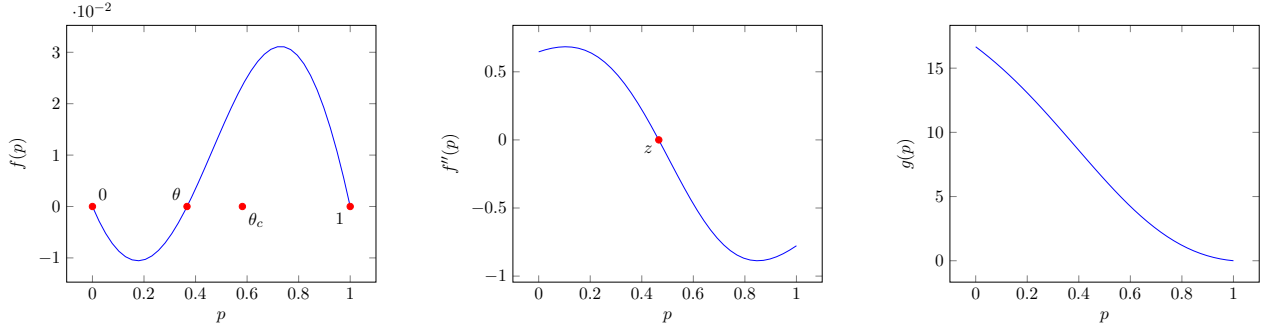
\begin{figure}[H] 
\begin{center}
\begin{tikzpicture}[thick,scale=0.6, every node/.style={scale=1.0}] \begin{axis}[xlabel=$p$,ylabel={$f(p)$},
xmin=-0.1,xmax=1.1,
legend style={at={(0.8,1.1)},
anchor=north west,
 legend columns=2}]
\addplot[color=blue]coordinates { 
(0.0,-0.0)
(0.02564102564102564,-0.0028352439305047365)
(0.05128205128205128,-0.0052357893728926)
(0.07692307692307693,-0.0071939703352266555)
(0.10256410256410256,-0.00870479334098737)
(0.1282051282051282,-0.009766349136427871)
(0.15384615384615385,-0.010380224712354674)
(0.1794871794871795,-0.010551903624802118)
(0.20512820512820512,-0.010291140626711095)
(0.23076923076923075,-0.009612294934917378)
(0.2564102564102564,-0.0085346052364522)
(0.28205128205128205,-0.007082388979023975)
(0.3076923076923077,-0.005285148780569625)
(0.3333333333333333,-0.003177570093457937)
(0.358974358974359,-0.0007993966817496234)
(0.3846153846153846,0.0018048259476426164)
(0.41025641025641024,0.004586144334255181)
(0.4358974358974359,0.0074917276268393505)
(0.4615384615384615,0.010465528192008012)
(0.48717948717948717,0.01344905897960561)
(0.5128205128205128,0.016382267966775003)
(0.5384615384615384,0.0192044842765304)
(0.5641025641025641,0.021855405980391273)
(0.5897435897435898,0.02427609655964444)
(0.6153846153846154,0.02640995581206602)
(0.641025641025641,0.028203631792776816)
(0.6666666666666666,0.029607843137254904)
(0.6923076923076923,0.030578085625584048)
(0.717948717948718,0.031075202752519057)
(0.7435897435897436,0.031065806899845855)
(0.7692307692307692,0.030522544939722864)
(0.7948717948717948,0.029424209203001924)
(0.8205128205128205,0.027755701249520388)
(0.8461538461538461,0.02550786139645981)
(0.8717948717948718,0.022677181235125653)
(0.8974358974358974,0.019265419265419276)
(0.923076923076923,0.015279141294892516)
(0.9487179487179487,0.010729207485723643)
(0.9743589743589743,0.0056302270679267526)
(1.0,0.0)
 };
 \filldraw[red] (0,0) circle[radius=2pt]  node[anchor=south west] {\color{black}$0$};
 \filldraw[red] (0.3668451,0) circle[radius=2pt]  node[anchor=south east] {\color{black}$\theta$};
 \filldraw[red] (1,0) circle[radius=2pt]  node[anchor=north east] {\color{black}$1$};
 \filldraw[red] ( 0.581719999,0) circle[radius=2pt]  node[anchor=north west] {\color{black}$\theta_c$}; 


\end{axis} 
\end{tikzpicture}\hfill
\begin{tikzpicture}[thick,scale=0.6, every node/.style={scale=1.0}] \begin{axis}[xlabel=$p$,ylabel={$f''(p)$},
xmin=-0.1,xmax=1.1,
legend style={at={(0.8,1.1)},
anchor=north west,
 legend columns=2}]
\addplot[color=blue]coordinates { 
(0.0,0.6463010204081633)
(0.02564102564102564,0.6614639442737119)
(0.05128205128205128,0.6731749368197163)
(0.07692307692307693,0.6808223661135449)
(0.10256410256410256,0.6837787682075737)
(0.1282051282051282,0.681415932993428)
(0.15384615384615385,0.673123195013805)
(0.1794871794871795,0.6583287447219854)
(0.20512820512820512,0.6365235302739232)
(0.23076923076923075,0.607287040227548)
(0.2564102564102564,0.5703139608661207)
(0.28205128205128205,0.5254404132808134)
(0.3076923076923077,0.4726682266377906)
(0.3333333333333333,0.41218553144665143)
(0.358974358974359,0.3443818969248599)
(0.3846153846153846,0.2698563218339291)
(0.41025641025641024,0.1894166399013586)
(0.4358974358974359,0.10406932409443599)
(0.4615384615384615,0.014999252563364296)
(0.48717948717948717,-0.07646030584002146)
(0.5128205128205128,-0.16886647694232976)
(0.5384615384615384,-0.26071259531863544)
(0.5641025641025641,-0.35047867276258315)
(0.5897435897435898,-0.43668369169951604)
(0.6153846153846154,-0.5179366749262163)
(0.641025641025641,-0.5929835319057432)
(0.6666666666666666,-0.660746997761042)
(0.6923076923076923,-0.7203575297601194)
(0.717948717948718,-0.7711737448199398)
(0.7435897435897436,-0.812791789084395)
(0.7692307692307692,-0.8450438393396462)
(0.7948717948717948,-0.8679866645567184)
(0.8205128205128205,-0.8818817596370052)
(0.8461538461538461,-0.8871689615798446)
(0.8717948717948718,-0.8844356559660406)
(0.8974358974358974,-0.8743836882961584)
(0.923076923076923,-0.8577959389832039)
(0.9487179487179487,-0.8355042436254928)
(0.9743589743589743,-0.80835998754461)
(1.0,-0.7772083200000002)
 };
 \filldraw[red] (0.466,0) circle[radius=2pt] node[anchor=north east] {\color{black}$z$};


\end{axis} 
\end{tikzpicture} \hfill
\begin{tikzpicture}[thick,scale=0.6, every node/.style={scale=1.0}] \begin{axis}[xlabel=$p$,ylabel={$g(p)$},
xmin=-0.1,xmax=1.1,
legend style={at={(0.8,1.1)},
anchor=north west,
 legend columns=2}]
\addplot[color=blue]coordinates { 
(0.0,16.666666666666668)
(0.02564102564102564,16.272169397801605)
(0.05128205128205128,15.857873634729712)
(0.07692307692307693,15.4238368387508)
(0.10256410256410256,14.970276931999422)
(0.1282051282051282,14.49758991472006)
(0.15384615384615385,14.00636653024102)
(0.1794871794871795,13.49740732141475)
(0.20512820512820512,12.971735354492344)
(0.23076923076923075,12.430605841177892)
(0.2564102564102564,11.875511875511876)
(0.28205128205128205,11.308185527521026)
(0.3076923076923077,10.730593607305936)
(0.3333333333333333,10.144927536231886)
(0.358974358974359,9.553586937641134)
(0.3846153846153846,8.959156785243742)
(0.41025641025641024,8.364378215362782)
(0.4358974358974359,7.772113406672033)
(0.4615384615384615,7.1853052404105915)
(0.48717948717948717,6.606932754095393)
(0.5128205128205128,6.039963669391463)
(0.5384615384615384,5.487305487305487)
(0.5641025641025641,4.9517567813580925)
(0.5897435897435898,4.435960367239342)
(0.6153846153846154,3.9423599782490477)
(0.641025641025641,3.473161930536762)
(0.6666666666666666,3.0303030303030307)
(0.6923076923076923,2.6154256738724313)
(0.717948717948718,2.229860743874493)
(0.7435897435897436,1.8746185369256252)
(0.7692307692307692,1.5503875968992253)
(0.7948717948717948,1.2575409975358995)
(0.8205128205128205,0.9961493386907754)
(0.8461538461538461,0.7659995058067707)
(0.8717948717948718,0.5666180994010037)
(0.8974358974358974,0.39729837107667915)
(0.923076923076923,0.2571294997662462)
(0.9487179487179487,0.14502709716815526)
(0.9743589743589743,0.05976393246675636)
(1.0,0.0)
 };


\end{axis} 
\end{tikzpicture}

\end{center}
\caption{
(from left to right) Graphs of the function $f$, its second order derivative $f''$ and the function $g$ by using the data from \cite{data} (see Table \ref{tab:value}).
\label{zeros_fonction_f}}
\end{figure}


\end{remark}

\section{Proofs}\label{sec:proofs}

This section will be devoted to prove Theorem \ref{TH} and \ref{prop1.1}.

\subsection{Model reduction}\label{sec:modelreduc}

In this section, we will give a proof of Theorem \ref{TH}, allowing us to reduce the system \eqref{eq:ni}--\eqref{eq:nu} to a scalar reaction-diffusion equation for the proportion as the parameter $\eps$ goes to $0$.  It is inspired by \cite{APSV2018} in which the authors \blue{use} a model composed by two differential equations.

\subsubsection{Uniform a priori estimates}

We first establish some uniform bounds with respect to $\eps>0$.

\begin{lemma}\label{lem:Linf}
  Assume the assumptions of Theorem \ref{TH} hold. Let $u^\eps$ be given in $\mathcal{V}_{C,M}$, $\eps\in (0,1)$ and $(p^\eps,n^\eps)$ be the unique solution of \eqref{eq:n}--\eqref{eq:peps}.
  Then,
  $$
  n^\eps \text{ is uniformly bounded in } L^\infty([0,T]\times \Omega),
  \text{ and } 0 \leq p^{\eps} \leq 1 \text{ on } [0,T]\times \Omega.
  $$
\end{lemma}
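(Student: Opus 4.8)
The plan is to transfer the problem to the original population densities, where nonnegativity and the structure of the reaction terms are transparent, and then read off the bounds on $n^\eps$ and $p^\eps$. Recall that the change of variables $n_{in}^\eps = p^\eps N^\eps$, $n_{un}^\eps = (1-p^\eps)N^\eps$ with $N^\eps := n_{in}^\eps + n_{un}^\eps = K(1-\eps n^\eps)$ turns \eqref{eq:n}--\eqref{eq:peps} back into the two-compartment system \eqref{eq:ni}--\eqref{eq:nu}, whose well-posedness has been recalled above. I would work throughout with $(n_{in}^\eps,n_{un}^\eps)$ and only translate the conclusions into bounds on $n^\eps$ and $p^\eps$ at the very end.

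First, nonnegativity. Both reaction terms are quasi-positive: the right-hand side of \eqref{eq:ni} reduces to the control $u^\eps\geq 0$ on $\{n_{in}^\eps=0\}$, while the right-hand side of \eqref{eq:nu} vanishes on $\{n_{un}^\eps=0\}$. Since the initial data satisfy $n_{in}^{\text{init},\eps}\geq 0$ and $n_{un}^{\text{init},\eps}>0$, the parabolic maximum principle gives $n_{in}^\eps\geq 0$, and the strong maximum principle gives $n_{un}^\eps>0$ on $(0,T]\times\Omega$. In particular $N^\eps>0$, so $p^\eps=n_{in}^\eps/N^\eps$ is well defined and lies in $[0,1)$, which already yields $0\leq p^\eps\leq 1$.

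Next come the two-sided bounds on $N^\eps$, equivalent to the bound on $n^\eps$. Summing \eqref{eq:ni}--\eqref{eq:nu} and using $n_{in}^\eps=p^\eps N^\eps$, $n_{un}^\eps=(1-p^\eps)N^\eps$, one checks that $N^\eps$ solves the scalar equation
\[
(\pa_t - D\Delta)N^\eps = \frac{F_{un}}{\eps}\Big(1-\frac{N^\eps}{K}\Big)N^\eps\, Q(p^\eps) - d_{un}N^\eps\big[(\delta-1)p^\eps+1\big] + u^\eps,
\]
where I write $Q(p):=s_h p^2-(s_f+s_h)p+1=(1-p)(1-s_hp)+(1-s_f)p$. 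The second expression shows that $Q(p)\geq q_0:=\min_{[0,1]}Q>0$ for $p\in[0,1]$, since the two summands are nonnegative on $[0,1]$ and cannot vanish simultaneously there; note this needs neither \eqref{cond1} nor \eqref{cond2}. The key point is that the constant-in-space functions $\bar{N}=K(1+a\eps)$ and $\underline{N}=K(1-b\eps)$ are a super- and a sub-solution of this scalar equation uniformly over all admissible coefficients, i.e. for every value $p^\eps(t,x)\in[0,1]$ and $u^\eps(t,x)\in[0,M]$. Indeed, evaluating the reaction at $\bar{N}$ gives $-F_{un}a\bar{N}Q(p^\eps)-d_{un}\bar{N}[(\delta-1)p^\eps+1]+u^\eps\leq M-F_{un}aKq_0\leq 0$ once $a\geq\max\{C/K,\ M/(F_{un}Kq_0)\}$; at $\underline{N}$ it is $\geq\underline{N}(F_{un}bq_0-d_{un}\delta)\geq 0$ provided $\underline{N}\geq 0$ and $b\geq\max\{C/K,\ d_{un}\delta/(F_{un}q_0)\}$. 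The well-prepared assumption \eqref{eq:wellprepared} ensures $\underline{N}(0)\leq N^\eps(0)\leq\bar{N}(0)$. The scalar comparison principle, applied with $p^\eps$ and $u^\eps$ frozen as given bounded coefficients, then yields $N^\eps\leq K(1+a\eps)$, hence $n^\eps\geq -a$, and, whenever $b\eps\leq 1$, $N^\eps\geq K(1-b\eps)$, hence $n^\eps\leq b$. For the remaining range $b\eps>1$ the upper bound is automatic, since $N^\eps>0$ forces $n^\eps=(1-N^\eps/K)/\eps<1/\eps<b$. Altogether $-a\leq n^\eps\leq b$ with $a,b$ independent of $\eps$, which is the claimed uniform $L^\infty$ bound.

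The main obstacle I anticipate is the rigorous treatment of the singular $1/\eps$ reaction together with the coupling between $N^\eps$ and $p^\eps$: one must justify that $\bar{N}$ and $\underline{N}$ are genuine super- and sub-solutions \emph{simultaneously for every frozen value} $p^\eps\in[0,1]$ (so that the one-component comparison applies to the true, solution-dependent coefficient $p^\eps$), and verify that the constants $a,b$ depend only on the model parameters $K,F_{un},d_{un},\delta,s_f,s_h,M$ and the constant $C$ of \eqref{eq:wellprepared}, and not on $\eps$. This is exactly where the strict positivity $q_0>0$ of $Q$ on $[0,1]$ and the a priori bound $p^\eps\in[0,1]$ from the first step are essential.
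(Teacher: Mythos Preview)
Your proposal is correct and follows essentially the same strategy as the paper: both prove $0\le p^\eps\le 1$ via nonnegativity of $n_{in}^\eps$ and strict positivity of $n_{un}^\eps$, and both bound $n^\eps$ by constructing constant super- and sub-solutions and invoking the scalar comparison principle with $p^\eps,u^\eps$ treated as frozen bounded coefficients. The only difference is cosmetic: the paper works directly with equation \eqref{eq:n} for $n^\eps$ and takes as barriers $\tilde K=\max\{\max_{p}h(p,0),\|n^{\text{init},\eps}\|_\infty\}$ and $\min\{-\|n^{\text{init},\eps}\|_\infty,\min_{p}h(p,M)\}$, whereas you pass to $N^\eps=K(1-\eps n^\eps)$ and use $K(1\pm c\,\eps)$; under the linear change of variables these are the same barriers. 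A minor bonus of your formulation is that it covers the full range $\eps\in(0,1)$ stated in the lemma (handling $b\eps>1$ via $N^\eps>0$), while the paper's write-up only treats $\eps\le\eps_0$, but this is inessential for the intended application $\eps\to 0$.
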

\begin{proof}
  By nonnegativity of $n_{in}^{\text{init}}$ and $n_{un}^{\text{init}}$, it is standard to deduce the nonnegativity of $n_{in}^\eps$ and $n_{un}^\eps$ (Indeed $0$ is a subsolution for \eqref{eq:ni} and for \eqref{eq:nu}, \blue{ see e.g. \cite{conway1977comparison}).
  Moreover, since $\mathbb{R}_+$ is invariant for the equation of $n_{un}^\eps$ and $n_{un}^{\text{init}}$ is non identically equal to zero, we deduce that $n_{un}^\eps>0$  on $\Omega\times (0,T]$ 
  (see e.g. \cite[th. 2]{weinberger1975invariant}).
Therefore, $p^\eps$ is well-defined on $[0,T]\times \Omega$ and satisfies by definition $0\leq p^\eps \leq 1$ on $[0,T]\times \Omega$.
}

Consider the function $h$ defined in \eqref{eq:h}. We remark that the denominator is positive. 
  Let $\tilde{K} := \max\{\max_{p\in[0,1]} h(p,0), \|n^{\text{init},\eps}\|_\infty\}$.  Let $\eps_0$ be such that 
  $\tilde{K}\leq \frac{1}{\eps_0}$ ($\Leftrightarrow 1-\eps_0\tilde{K}\geq0$), then, \blue{thanks to this choice, we have for $0<\eps\leq \eps_0$
  \begin{align*}
    & -\frac{1-\eps \tilde{K}}{\eps} \left(F_{un} \tilde{K} (s_h (p^\eps)^2 - (s_f+s_h) p^\eps + 1) - d_{un} ((\delta-1)p^\eps +1 ) \right) - \frac{u^\eps}{\eps K}  \\
    & \leq -\frac{1-\eps \tilde{K}}{\eps}  F_{un} (s_h (p^\eps)^2 - (s_f+s_h) p^\eps + 1) (\tilde{K}-h(p,0)) \\
    & \leq 0 = \partial_t \tilde{K} - D \Delta \tilde{K}.
  \end{align*}
  Hence,}
  we have that $\tilde{K}$ is a supersolution for \eqref{eq:n} for any $0<\eps\leq \eps_0$.
  Then $n^\eps\leq \tilde{K}$ for any $0<\eps\leq \eps_0$.

  By the same token, we have that the negative constant 
  $\min\{-\|n^{\text{init},\eps}\|_\infty, \min_{p\in[0,1]} h(p,M)\}$ 
  is a subsolution for \eqref{eq:n}.
  Thus $n^\eps$ is uniformly bounded from below. We deduce the uniform bound of $n^\eps$ in $L^\infty([0,T]\times\Omega)$.
\end{proof}
\blue{
\begin{lemma}\label{lem:nrj}
  Under above assumptions, for $\eps>0$ small enough, we have the uniform estimate
    \begin{equation}\label{boundpEp}
  \int_0^T \int_\Omega |\nabla p^\eps|^2 \,dx \leq \overline{C}
  \end{equation}
  and
    \begin{equation}\label{boundDn}
    \eps D \int_0^T \int_\Omega |\nabla n^\eps|^2\,dxdt \leq C_0,
  \end{equation}
  for some nonnegative constants $\overline{C}$ and $C_0$.
\end{lemma}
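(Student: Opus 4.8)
The plan is to prove the two estimates separately, in each case testing the relevant equation against a well-chosen multiplier and integrating over $(0,T)\times\Omega$, relying throughout on the uniform $L^\infty$ bounds from Lemma~\ref{lem:Linf} (namely $0\le p^\eps\le 1$ and $\|n^\eps\|_{L^\infty}\le C$ uniformly in $\eps$). First I would fix $\eps$ small enough that $1-\eps n^\eps\ge \tfrac12$ on $[0,T]\times\Omega$, which is possible precisely because of the uniform bound on $n^\eps$; this renders all the fractions below harmless. I would establish \eqref{boundDn} first, because the weighted gradient bound on $n^\eps$ that it provides is exactly what is needed to absorb the cross term appearing in the estimate for $p^\eps$.

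For \eqref{boundDn}, the main obstacle is the singular factor $1/\eps$ in front of the reaction term of \eqref{eq:n}. Writing that right-hand side as $-\tfrac1\eps\Phi^\eps$ with
$$\Phi^\eps:=(1-\eps n^\eps)\big(F_{un}n^\eps(s_h(p^\eps)^2-(s_f+s_h)p^\eps+1)-d_{un}((\delta-1)p^\eps+1)\big)+\tfrac{u^\eps}{K},$$
I observe that $\Phi^\eps$ is uniformly bounded in $L^\infty((0,T)\times\Omega)$, again by Lemma~\ref{lem:Linf} together with $0\le u^\eps\le M$. The key idea is therefore \emph{not} to test with $n^\eps$ (which would produce the full $D\int|\nabla n^\eps|^2$, too strong) but with $\eps n^\eps$: the singular prefactor then cancels exactly, and after integration by parts using the Neumann condition one gets
$$\frac\eps2\frac{d}{dt}\int_\Omega (n^\eps)^2\,dx+\eps D\int_\Omega|\nabla n^\eps|^2\,dx=-\int_\Omega n^\eps\Phi^\eps\,dx.$$
Integrating in time over $(0,T)$ and discarding the nonnegative term $\tfrac\eps2\int_\Omega n^\eps(T)^2$, the right-hand side is bounded by $CT|\Omega|$, while the initial contribution $\tfrac\eps2\int_\Omega n^\eps(0)^2$ is controlled by the well-prepared assumption \eqref{eq:wellprepared} (which yields $n^\eps(0)=-K_0^\eps/K$ with $\|K_0^\eps\|_\infty\le C$, hence $\tfrac\eps2\int_\Omega n^\eps(0)^2\le C'$ for $\eps\le 1$). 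This gives \eqref{boundDn} with a constant $C_0$ independent of $\eps$.

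For \eqref{boundpEp}, I would test \eqref{eq:peps} against $p^\eps$ and integrate over $\Omega$, producing $\tfrac12\tfrac{d}{dt}\int(p^\eps)^2+D\int|\nabla p^\eps|^2$ on the left together with the cross term $\int_\Omega\frac{2\eps D}{1-\eps n^\eps}(\nabla p^\eps\cdot\nabla n^\eps)p^\eps$. The reaction terms on the right are uniformly bounded in $L^\infty$ thanks to $0\le p^\eps\le1$, $\|n^\eps\|_\infty\le C$ and $0\le u^\eps\le M$, hence contribute at most $CT|\Omega|$ after integration in time. The cross term is the delicate point: since $0\le p^\eps\le1$ and $1-\eps n^\eps\ge\tfrac12$, its integrand is bounded by $C\eps|\nabla p^\eps|\,|\nabla n^\eps|$, and a Young inequality splits it as $\tfrac D2|\nabla p^\eps|^2+C'\eps^2|\nabla n^\eps|^2$. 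The first part is absorbed into the diffusion term on the left, while
$$\eps^2\int_0^T\!\!\int_\Omega|\nabla n^\eps|^2=\eps\cdot\Big(\eps\int_0^T\!\!\int_\Omega|\nabla n^\eps|^2\Big)\le \eps\,\frac{C_0}{D}\le\frac{C_0}{D}\qquad(\eps\le 1)$$
by the already-established \eqref{boundDn}. Integrating in time over $(0,T)$ and dropping the nonnegative final-time term $\tfrac12\int_\Omega p^\eps(T)^2$ then yields \eqref{boundpEp} with a constant $\overline{C}$ independent of $\eps$. The whole argument hinges on the singular term of the $n^\eps$-equation, and the reason it is tamed is the choice of the multiplier $\eps n^\eps$, which converts the $1/\eps$ singularity into the uniformly bounded quantity $\Phi^\eps$.
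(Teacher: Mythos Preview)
Your proposal is correct and follows essentially the same approach as the paper: both test \eqref{eq:n} against $\eps n^\eps$ to cancel the $1/\eps$ singularity and obtain \eqref{boundDn} first, then test \eqref{eq:peps} against $p^\eps$ and use the already-established bound \eqref{boundDn} to absorb the cross term. The only cosmetic difference is that you apply Young's inequality pointwise to the cross term, whereas the paper applies Cauchy--Schwarz in $L^2(\Omega)$ followed by $2ab\le a^2+b^2$; the two are equivalent here.
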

}
\begin{proof}
  On the one hand, multiplying equation \eqref{eq:n} by $\eps n^\eps$ and integrating on $\Omega$, we get
  \begin{align*}
    & \eps \frac{d}{dt} \int_\Omega |n^\eps|^2 \,dx + \eps D \int_\Omega |\nabla n^\eps|^2\,dx  \\
    & = - \int_\Omega (1-\eps n^\eps)n^\eps( F_{un} n^\eps (s_h (p^\eps)^2 - (s_f+s_h) p^\eps +1) - d_{un} ((\delta -1)p^\eps+1))\,dx  - \frac{1}{K} \int_\Omega u^\eps n^\eps\,dx.
  \end{align*}
  Since from Lemma \ref{lem:Linf}, we know that $n^\eps$ and $p^\eps$ are uniformly bounded in $L^\infty ([0,T]\times \Omega)$, we deduce \eqref{boundDn} after an integration in time.

  On the other hand, we fix $\eps_0>0$ small enough such that, for all $\eps\leqslant\eps_0$, we have $|n^\eps| \leq C_1 < \frac 1\eps$ on $[0,T]\times \Omega$ for some constant $C_1>0$ (which is always possible thanks to Lemma \ref{lem:Linf}).
  Then, we multiply by $p^\eps$ the equation satisfied by $p^\eps$ \eqref{eq:peps} and integrate over $\Omega$, we deduce
  \begin{align*}
    \frac 12 \frac{d}{dt} \int_\Omega (p^\eps)^2 \,dx + D \int_\Omega |\nabla p^\eps|^2 \,dx + 2\eps D \int_\Omega \frac{p^\eps}{1-\eps n^\eps} \nabla p^\eps \cdot \nabla n^\eps \,dx \\
    \leq C_3 + \frac{1}{K(1-\eps_0 C_1)} \int_\Omega u^\eps(t,x)\,dx
  \end{align*}
   for some nonnegative constant $C_3$.
  Then, using a Cauchy-Schwarz inequality, we get
  \begin{align*}
    \frac 12 \frac{d}{dt} \int_\Omega (p^\eps)^2 \,dx + D \int_\Omega |\nabla p^\eps|^2 \,dx \leq \ 
    & C_2 + \frac{2\eps D}{1 - \eps C_1} \left(\int_\Omega |\nabla p^\eps|^2 \,dx\right)^{1/2} \left( \int_\Omega |\nabla n^\eps|^2 \,dx \right)^{1/2}  \nonumber \\
    & + \frac{1}{K(1-\eps C_1)} \int_\Omega u^\eps(t,x)\,dx.
  \end{align*}
From \eqref{boundDn} and the well-known inequality $2ab\leq a^2+b^2$, we deduce after an integration in time
  \begin{align*}
   \frac 12 \int_\Omega (p^\eps)^2 \,dx + D \left(1-\frac{\eps }{1 - \eps C_0}\right)  \int_0^T\int_\Omega |\nabla p^\eps|^2 \,dxdt 
    \leq C_2 T + \frac{DC_0^{1/2}}{1 - \eps C_1} + \frac{MT|\Omega|}{K(1-\eps C_1)},
  \end{align*}
  where we recall that $u^\eps\in \mathcal{V}_{C,M}$ and $p\in[0,1]$.
  Taking $\eps$ small enough, we get the desired estimate.
\end{proof}

\subsubsection{Compactness result and proof of Theorem \ref{TH}}

We first recall the following compactness result (see \cite{Simon}).
\begin{lemma}[Aubin-Lions]\label{Aubin-Lions}
Let $T > 0, q \in (1, \infty)$, $(\psi_n)_n$ a bounded sequence in $L^q (0, T ; H)$, where $H$ is a Banach space. 
If $\psi_n$ is bounded in $L^q (0, T ; V)$ and $V$ compactly embeds in $H$, and if $(\partial_t \psi_n)_n$ is bounded in $L^q (0, T ; V')$ uniformly with respect to $n$, 
then $(\psi_n)_n$ is relatively compact in $L^q (0, T ; H)$.
\end{lemma}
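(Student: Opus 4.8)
The plan is to reduce the statement to the Riesz–Fréchet–Kolmogorov compactness criterion in $L^q(0,T;H)$ in the form due to \cite{Simon}: a bounded family $F\subset L^q(0,T;H)$ is relatively compact provided that (a) for every subinterval $0\le t_1<t_2\le T$ the time-averages $\int_{t_1}^{t_2}\psi(t)\,dt$ form a relatively compact subset of $H$, and (b) the time-translates satisfy $\|\psi(\cdot+h)-\psi\|_{L^q(0,T-h;H)}\to 0$ as $h\to 0$, uniformly over $\psi\in F$. The two hypotheses of the lemma feed exactly these two conditions, within the natural continuous chain $V\hookrightarrow\hookrightarrow H\hookrightarrow V'$ (first embedding compact, last continuous): the compact embedding $V\hookrightarrow\hookrightarrow H$ will give (a), while the bound on $\partial_t\psi_n$ in $L^q(0,T;V')$ will give (b).

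First I would treat (a). By Hölder's inequality $\|\int_{t_1}^{t_2}\psi_n(t)\,dt\|_V\le (t_2-t_1)^{1/q'}\|\psi_n\|_{L^q(0,T;V)}$, which is bounded uniformly in $n$; since $V$ embeds compactly into $H$, the set of these averages is relatively compact in $H$. Next, for (b), the first step is to control translates in the weaker norm of $V'$. Since $\psi_n\in W^{1,q}(0,T;V')$ is absolutely continuous with values in $V'$, writing $\psi_n(t+h)-\psi_n(t)=\int_t^{t+h}\partial_s\psi_n(s)\,ds$ and applying Hölder in $s$ followed by Fubini in $t$ yields $\|\psi_n(\cdot+h)-\psi_n\|_{L^q(0,T-h;V')}\le h\,\|\partial_t\psi_n\|_{L^q(0,T;V')}\le Ch$, uniformly in $n$ (the exponents collapse because $q/q'+1=q$). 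This controls only the $V'$-norm, which is too weak, so the decisive step is the upgrade to the $H$-norm via Ehrling's interpolation inequality: for every $\eta>0$ there is $C_\eta>0$ with $\|v\|_H\le\eta\|v\|_V+C_\eta\|v\|_{V'}$ for all $v\in V$. Applying this pointwise in time to $v=\psi_n(t+h)-\psi_n(t)$ and taking $L^q$-norms, the $V$-term is bounded by $2\eta\|\psi_n\|_{L^q(0,T;V)}\le 2C\eta$ uniformly in $n$, while the $V'$-term tends to $0$ by the previous estimate; hence $\limsup_{h\to 0}\sup_n\|\psi_n(\cdot+h)-\psi_n\|_{L^q(0,T-h;H)}\le 2C\eta$, and letting $\eta\to 0$ establishes (b). With (a) and (b) in hand, the criterion of \cite{Simon} gives relative compactness of $(\psi_n)_n$ in $L^q(0,T;H)$.

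The hard part will be the Ehrling inequality, which is the mechanism converting the weak temporal regularity (a bound in $V'$) into genuine compactness in $H$, and it is where the compactness of $V\hookrightarrow\hookrightarrow H$ enters a second, essential time. I would prove it by contradiction: if it failed, there would exist $\eta_0>0$ and $v_k\in V$ with $\|v_k\|_H=1$ but $\|v_k\|_H>\eta_0\|v_k\|_V+k\|v_k\|_{V'}$, forcing $(v_k)$ bounded in $V$ and $\|v_k\|_{V'}\to 0$; by the compact embedding one extracts a subsequence converging in $H$ to some $v$ with $\|v\|_H=1$, while continuity of $H\hookrightarrow V'$ forces $v=0$ in $V'$ and hence $v=0$, a contradiction. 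Everything else—the Hölder and Fubini manipulations, the boundedness bookkeeping, and the verification of the Gelfand-triple structure in the application $V=H^1(\Omega)$, $H=L^2(\Omega)$, $V'=H^{-1}(\Omega)$—is routine.
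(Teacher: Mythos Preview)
The paper does not prove this lemma at all; it simply recalls it as a known compactness result and cites \cite{Simon}. Your proposal therefore goes well beyond what the paper does. That said, your argument is the standard proof: you correctly invoke Simon's $L^q(0,T;H)$ compactness criterion, verify condition (a) via the compact embedding $V\hookrightarrow\hookrightarrow H$ applied to time-averages, and verify condition (b) by first bounding the $V'$-norm of time-translates through the $W^{1,q}(0,T;V')$ regularity and then upgrading to the $H$-norm via Ehrling's inequality. The exponent bookkeeping in the translate estimate (yielding the clean factor $h$) and the contradiction proof of Ehrling are both correct. One minor point: the lemma as stated in the paper does not explicitly assume the continuous embedding $H\hookrightarrow V'$, which you use in the Ehrling step; this is implicit in the Gelfand-triple notation $V,H,V'$ but is worth noting as a hypothesis you rely on.
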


\noindent {\bf Proof of Theorem \ref{TH}}
We split the proof of the Theorem into several steps.

\textit{Step 1. Compactness}.
We use Lemma \ref{Aubin-Lions} with $q=2$, $H=L^2(\Omega)$, $V=H^1(\Omega)\cap L^\infty(\Omega)$.
Then, the sequence $(p^\eps)$ is clearly bounded in $L^2(0,T;V)$ from Lemma \ref{lem:Linf} and \ref{lem:nrj}. The compact embedding of $V$ in $H$ is well-known from the Rellich-Kondrachov Theorem.
We are left to verify the bound on the time derivative: let $\phi\in V$, we denote $\langle\cdot,\cdot\rangle := \langle\cdot,\cdot\rangle_{V',V}$ the duality bracket. From equation \eqref{eq:peps}, we get
\begin{align*}
\int_0^T |\langle \pa_t p^\eps(t),\phi\rangle|^2\,dt
& = \int_0^T \left|\langle D \Delta p^\eps(t) - \frac{2\eps D}{1-\eps n^\eps(t)} \nabla p^\eps(t) \cdot \nabla n^\eps(t) - \psi^\eps(t) ,\phi\rangle  \right|^2 \,dt,
\end{align*}
where $\psi^\eps$ is the function defining the right hand side in equation \eqref{eq:peps}, which is uniformly bounded in $L^1((0,T)\times \Omega)\cap L^\infty((0,T)\times \Omega)$ as a direct consequence of Lemma \ref{lem:Linf}. Then,
\begin{align*}
\int_0^T |\langle \pa_t p^\eps,\phi\rangle|^2\,dt \leq \ 
& C_0 \|\nabla \phi\|_{L^2(\Omega)}^2 \int_0^T \|\nabla p^\eps\|_{L^2(\Omega)}^2 \,dt 
+ C_1 \eps \|\phi\|_{L^\infty(\Omega)}^2 \int_0^T \|\nabla p^\eps\|_{L^2(\Omega)}^2 \|\nabla n^\eps\|_{L^2(\Omega)}^2\,dt \\
& +  C_2 \|\phi\|^2_{L^2(\Omega)} \|\psi^\eps \|_{L^\infty((0,T)\times \Omega)}^2.
\end{align*}
Hence, we get the required bound from Lemma \ref{lem:Linf} and \ref{lem:nrj} and estimate \eqref{boundDn}.
We may apply Lemma \ref{Aubin-Lions} and deduce the relative strong compactness of $(p_\eps)$ in $L^2(0,T;L^2(\Omega))$.

Moreover, from the estimates in Lemma \ref{lem:Linf}, we deduce the relative weak-star compactness of the sequence $(n_\eps)$ in $L^\infty((0,T)\times\Omega)$.
Therefore, there exists $p^0\in L^2(0,T;H^1(\Omega))$ and $n^0\in L^\infty((0,T)\times\Omega)$ such that, up to extraction of subsequences, we have $p_\eps \to \bar{p}$ strongly in $L^2((0,T)\times\Omega)$ and a.e., $\nabla p_\eps \rightharpoonup \nabla\bar{p}$ weakly in $L^2((0,T)\times\Omega)$, and $n_\eps \rightharpoonup \bar{n}$ in $L^\infty((0,T)\times\Omega)$-weak$\star$.
\\

\textit{Step 2. Passing to the limit.}
We now pass to the limit in the weak formulation of equations \eqref{eq:n} and \eqref{eq:peps}.
From the weak formulation of \eqref{eq:n}, we deduce that for any test function $\phi\in C^\infty_c((0,T)\times\Omega)$, we have
\begin{align*}
  & \eps \int_0^T\int_\Omega (-n^\eps \pa_t \phi - D n^\eps \Delta \phi)\,dxdt \\
  & = -\int_0^T \int_\Omega \left( F_{un} n^\eps (s_h (p^\eps)^2 - (s_f+s_h) p^\eps +1) - d_{un} ((\delta -1)p^\eps+1)) - \frac{u^\eps}{K}\right)\phi \,dxdt  \\
  & \quad + \eps \int_0^T \int_\Omega n^\eps( F_{un} n^\eps (s_h (p^\eps)^2 - (s_f+s_h) p^\eps +1) - d_{un} ((\delta -1)p^\eps+1)) \phi \,dxdt.
\end{align*}
From the $L^\infty$-bound of Lemma \ref{lem:Linf}, we deduce that the term of the left hand side and the last term of the right hand side converge to $0$ as $\eps\to 0$.
For the first term of the right hand side, we may pass into the limit thanks to the weak convergence of $n^\eps$, the strong convergence of $p^\eps$, and the weak convergence of $u^\eps$. We obtain, for any $\phi\in C_c^\infty((0,T)\times\Omega)$,
$$
0 = -\int_0^T \int_\Omega \left( F_{un} n^0 (s_h (p^0)^2 - (s_f+s_h) p^0 +1) - d_{un} ((\delta -1) p^0+1)) - \frac{u^0}{K}\right)\phi \,dxdt.
$$
As a consequence \eqref{eq:h} is verified almost everywhere.

We are left to pass into the limit in the weak formulation of \eqref{eq:peps}.
Let $\phi\in C^\infty_c([0,T]\times\overline{\Omega})$, we have
\begin{align}
  & \int_0^T\int_\Omega (- p^\eps\pa_t \phi + D \nabla p^\eps\cdot \nabla \phi + \frac{2\eps D \phi}{1-\eps n^\eps} \nabla p^\eps\cdot\nabla n^\eps)\,dxdt  \nonumber \\
  & = \int_0^T \int_\Omega p^\eps(1-p^\eps)(F_{un} n^\eps (s_h p^\eps - s_f) + (1-\delta) d_{un})\phi\,dxdt + \int_0^T \int_\Omega\frac{u^\eps(1-p^\eps)}{K(1-\eps n^\eps)}\phi\,dxdt.
    \label{eqfin}
\end{align}
From the above convergence it is straightforward to pass into the limit into the first two terms of the left hand side. For the third term, we use estimate \eqref{boundDn}, and a Cauchy-Schwarz inequality to get
$$
\int_0^T\int_\Omega \frac{2\eps D \phi}{1-\eps n^\eps} \nabla p^\eps\cdot\nabla n^\eps\,dxdt \leq \frac{2 \sqrt{\eps} D \|\phi\|_{L^\infty}}{1-\eps \|n^\eps\|_{L^\infty}}\|\nabla p^\eps\|_{L^2} \sqrt{\blue{\overline{C}}} \to 0,
$$
as $\eps\to 0$, thanks to Lemma \ref{lem:Linf} and \ref{lem:nrj}.

We may pass into the limit for the first term of the right hand side of \eqref{eqfin} since $(p^\eps)$ converges strongly and a.e., and $(n^\eps)$ converges weakly.
Then, for the last term of the right hand side of \eqref{eqfin}, we verify
\begin{align}
& \left|\int_0^T \int_\Omega \left(\frac{u^\eps(1-p^\eps)}{K(1-\eps n^\eps)}
  - \frac{u^0 (1-p^0)}{K}\right)\phi \,dxdt\right|  \nonumber \\
  & = \left|\int_0^T \int_\Omega \left(\frac{(u^\eps-u^0)(1-p^0)+ u^\eps(p^0-p^\eps) + \eps u^0 (1-p^0) n^\eps}{K(1-\eps n^\eps)}\right) \phi \,dxdt\right| \nonumber \\
  & \leq \left|\int_0^T \int_\Omega \left(\frac{(u^\eps-u^0)(1-p^0)}{K(1-\eps n^\eps)}\right) \phi \,dxdt\right| +
    \frac{\|u^0\|_{L^\infty} \|\phi\|_{L^2}}{K(1-\eps\|n^\eps\|_{L^\infty})} \|p^0 - p^\eps\|_{L^2} + \eps \frac{\|u^\eps\|_{L^\infty} \|n^\eps\|_{L^\infty}\|\phi\|_{L^1}}{K(1-\eps\|n^\eps\|_{L^\infty})}.
    \label{eq1}
\end{align}
From the strong $L^2$ convergence of $(p^\eps)$ and the $L^\infty$ bounds in Lemma \ref{lem:Linf}, we deduce that the last two terms go to $0$ as $\eps\to 0$.
For the first term, we write
$$
\frac{(u^\eps-u^0)(1-p^0)}{K(1-\eps n^\eps)} = \frac{(u^\eps-u^0)(1-p^0)}{K} + \eps\frac{(u^\eps-u^0)(1-p^0)n^\eps}{K(1-\eps n^\eps)}.
$$
It is then straightforward to conclude the convergence towards $0$ of the first term of the right hand side of \eqref{eq1}.

Finally, passing into the limit $\eps\to 0$ into \eqref{eqfin}, we obtain
\begin{align*}
\int_0^T\int_\Omega (- p^0\pa_t \phi + D \nabla p^0\cdot \nabla \phi)\,dxdt =
&\int_0^T \int_\Omega p^0(1-p^0)(F_{un} n^0 (s_h p^0 - s_f) + (1-\delta) d_{un})\phi\,dxdt \\
&+ \int_0^T \int_\Omega\frac{u^0(1-p^0)}{K}\phi\,dxdt.
\end{align*}
We conclude by using the fact that $(n^0,p^0,u^0)$ verifies the relation \eqref{eq:h}. \\


\subsubsection{Proof of Corollary~\eqref{cor:CVJTeps}}\label{proof:cor:CVJTeps}

First, observe that $p^\eps(T,\cdot)$ converges weakly-star to $p^0(T,\cdot)$ in $L^2(\Omega)$. Indeed, the proof is standard. Consider the variational formulation \eqref{eqfin} on $p^\eps$ where test functions $\phi$ are now chosen in $C^\infty([0,T]\times C^\infty_c(\overline{\Omega}))$ instead of $C^\infty_c([0,T]\times\overline{\Omega})$. The variational formulation \eqref{eqfin} is then modified by the addition of
$$
\int_\Omega (p^\eps(T,\cdot)\phi(T,\cdot)-p^\eps(0,\cdot)\phi(0,\cdot))\, dx
$$  
in the left-hand side term. Since $p^\eps(T,\cdot)$ is bounded in $L^2(\Omega)$ it converges weakly to some limit in $L^2(\Omega)$ up to a subsequence. Passing to the limit in the variational formulation and using Theorem~\ref{TH} allows us to identify the closure point of $p^\eps(T,\cdot)$ as $p^0(T,\cdot)$. Finally, by uniqueness of the closure point, we infer that the whole sequence $p^\eps(T,\cdot)$ converges to $p^0(T,\cdot)$. Since the $L^2(\Omega)$-norm is lower semicontinuous for the weak-topology, we get that
 $$
  \liminf_{\eps\to 0} \int_\Omega \big(p^\eps(T,x)\big)^2\,dx \leq \int_\Omega \big(p^0(T,x)\big)^2\,dx.
  $$

  Let us multiply the equation \eqref{eq:peps} by $p^\eps$ and then, integrate it on $(0,T)\times\Omega$, we obtain
  \begin{align}
    \frac 12 \int_\Omega \big(p^\eps(T)\big)^2\,dx =\ 
    & \frac 12 \int_\Omega (p^{\text{init},\eps})^2\,dx
      - D \int_0^T \int_\Omega |\nabla p^\eps|^2\,dx
      + \int_0^T \int_\Omega \frac{u^\eps p^\eps(1-p^\eps)}{K(1-\eps n^\eps)}\,dx  \label{eq:limsupp} \\
    & + \int_0^T \int_\Omega (p^\eps)^2 (1-p^\eps)(F_u n^\eps (s_h p^\eps - s_f) + (1-\delta) d_u)\,dx  \nonumber \\
    & - 2\eps D \int_0^T \int_\Omega \frac{p^\eps}{1-\eps n^\eps} \nabla p^\eps\cdot \nabla n^\eps \,dx.  \nonumber
  \end{align}
  By assumption on the initial data, we have the convergence of the first term of the right hand side.
  For the second term of the right hand side, the weak convergence in $L^2((0,T),H^1(\Omega))$ guarantee that this term is upper semi-continuous, then
  $$
  \limsup_{\eps\to 0}\left( - D \int_0^T \int_\Omega |\nabla p^\eps|^2\,dx\right)
  \leq - D \int_0^T \int_\Omega |\nabla p^0|^2\,dx.
  $$
  Due to the strong convergence in $L^2((0,T)\times\Omega)$ of the sequence $(p^\eps)_\eps$ and using also the uniform bound of the sequence $(n^\eps)_\eps$ (see Lemma \ref{lem:Linf}), we deduce the convergence of the third term of the right hand side of \eqref{eq:limsupp}.
  Using also the strong convergence of $(p^\eps)_\eps$ and the weak convergence of $(n^\eps)_\eps$, we get the convergence of the fourth term in the right hand side of \eqref{eq:limsupp}.
  Finally, from a Cauchy-Schwarz inequality we have
  $$
  2\eps D \int_0^T \int_\Omega \frac{p^\eps}{1-\eps n^\eps} \nabla p^\eps\cdot \nabla n^\eps \,dx \leq C \eps^{\frac 12} \left(\int_0^T\int_\Omega |\nabla p^\eps|^2\,dxdt\right)^{1/2}  \left(\eps \int_0^T\int_\Omega |\nabla n^\eps|^2\,dxdt\right)^{1/2}.
  $$
  Thanks to the estimates in Lemma \ref{lem:nrj}, we get that this latter term goes to $0$ as $\eps\to 0$.
  Finally, we have proved that 
  $$
  \limsup_{\eps\to 0} \int_\Omega \big(p^\eps(T,x)\big)^2\,dx \leq \int_\Omega \big(p^0(T,x)\big)^2\,dx.
  $$

  It follows that $\Vert p^\eps(T,\cdot)\Vert_{L^2(\Omega)}$ converges to $\Vert p^0(T,\cdot)\Vert_{L^2(\Omega)}$ as $\eps\searrow 0$, and since $p^\eps(T,\cdot)$ converges to $p^0(T,\cdot)$ weakly in $L^2(\Omega)$, it follows that this convergence is in fact strong, whence the claim.
  
  It remains to investigate the convergence of $J_T^\eps(u^\eps)$ as $\eps\searrow 0$.
According to Theorem~\ref{TH} and its proof, one has $n_{in}^\eps(T,\cdot)+n_{un}^\eps(T,\cdot)= K-\eps Kn^\eps(T,\cdot)$ and therefore, $(n_{in}^\eps(T,\cdot)+n_{un}^\eps(T,\cdot))_{\eps>0}$ converges to $K$ in $L^\infty(\Omega)$.
Since $(p^\eps(T))_{\eps>0}$ converges to $p^0(T,\cdot)$ in $L^2(\Omega)$ as $\eps\searrow 0$, it follows that $(n_{in}^\eps(T,\cdot),n_{un}^\eps(T,\cdot)))_{\eps>0}$ converges to $(Kp^0(T,\cdot),K(1-p^0(T,\cdot)))$ in $L^2(\Omega)$. Since the {\it Wolbachia}-infected equilibrium $(n_{in,W}^*,0)$ converges to $(K,0)$ as $\varepsilon\searrow 0$, according to \eqref{eqn1Wstar}, by passing to the limit in \eqref{eq:J}, it follows that $J^\eps_T(u^\eps)$ converges, as $\eps \searrow 0$ to $J^0_T(u^0)$ given by 
$$
J^0_T(u^0) = \frac{K^2}{2} (1-p^0(T))^2+\frac{K^2}{2} (1-p^0(T))^2=K^2 (1-p^0(T))^2,
$$
where $p^0$ denotes the solution of \eqref{eq:p0}.


\subsection{Analysis of the optimal control problem (\ref{prob:reduced})}
\label{sec:OCPanalysis}

\subsubsection{Existence of an optimal control}
As a preliminary remark, note that existence of an optimal control has been shown in \cite[Theorem~1.1]{mosquiCemracs} in a more general setting. To make this article self-contained, we recall the argument hereafter. The analysis to follow is valid under the assumption~\eqref{assump:fg} on $f$ and $g$. It is not restricted to the particular choice of functions $f$ and $g$ given by \eqref{eq:fg}.
\begin{lemma}
Let $T>0$, $C>0$ and $M>0$. Problem~\eqref{prob:reduced} admits a solution $u^*_0$.
\end{lemma}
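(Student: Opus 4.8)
The plan is to use the direct method of the calculus of variations. I would take a minimizing sequence $(u_0^n)_n$ in $\mathcal{V}_{C,M}$, that is, a sequence with $J_T(u_0^n)\to \inf_{u_0\in\mathcal{V}_{C,M}}J_T(u_0)$, and extract a convergent subsequence whose limit is admissible and achieves the infimum. First I would observe that $\mathcal{V}_{C,M}$ is nonempty (e.g. $u_0=0$ belongs to it), bounded in $L^\infty(\Omega)$, convex, and closed for the weak-star topology of $L^\infty(\Omega)$; indeed the pointwise constraint $0\le u_0\le M$ and the integral constraint $\int_\Omega u_0\,dx\le C$ both pass to the weak-star limit. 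By the Banach--Alaoglu theorem, up to a subsequence still denoted $(u_0^n)_n$, we have $u_0^n\rightharpoonup u_0^*$ weakly-star in $L^\infty(\Omega)$ for some $u_0^*\in\mathcal{V}_{C,M}$.

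Next I would transfer this convergence to the states. Denote by $p_n$ the solution of \eqref{eq:psimple} with initial datum $p_n(0^+,\cdot)=G^{-1}(u_0^n)$, and by $p^*$ the solution associated with $u_0^*$. The key continuity step is to show that $p_n(T,\cdot)\to p^*(T,\cdot)$ in $L^2(\Omega)$ (or at least weakly, with lower semicontinuity of the cost). Since the $u_0^n$ are uniformly bounded in $[0,M]$ and $G^{-1}$ maps $[0,M]$ into a compact subset $[0,G^{-1}(M)]\subset[0,1)$, the initial data $p_n(0^+,\cdot)$ are uniformly bounded in $L^\infty(\Omega)$ with values in $[0,1)$. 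Parabolic regularity for \eqref{eq:psimple} (the reaction term $f$ being $C^2$, hence Lipschitz on the invariant region $[0,1]$) then yields uniform bounds on $(p_n)_n$ in, say, $L^2(0,T;H^1(\Omega))$ together with a bound on $(\partial_t p_n)_n$ in $L^2(0,T;H^1(\Omega)')$. An Aubin--Lions argument (Lemma~\ref{Aubin-Lions}) gives strong compactness of $(p_n)_n$ in $L^2((0,T)\times\Omega)$, and one passes to the limit in the weak formulation to identify the limit as the solution of \eqref{eq:psimple} driven by $u_0^*$, using the continuity of $G^{-1}$ to pass to the limit in the initial datum.

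The step requiring the most care, and which I expect to be the main obstacle, is the passage to the limit in the initial condition $p_n(0^+,\cdot)=G^{-1}(u_0^n)$. Weak-star convergence of $u_0^n$ does not by itself give convergence of $G^{-1}(u_0^n)$, because $G^{-1}$ is nonlinear; one cannot simply commute the nonlinearity with the weak limit. The way around this is to recall that the map $u_0\mapsto p(T,\cdot)$ need not be weakly continuous if built naively, but that here the relevant quantity is the cost, which we only need to be weakly-star lower semicontinuous. A clean route is to exploit the smoothing effect of the heat semigroup: even though the initial data converge only weakly, the solution at the positive time $T$ depends continuously on the initial datum in a weak topology, because $e^{T\Delta}$ maps bounded sequences in $L^2(\Omega)$ (or in the space of measures) into relatively compact sets in $L^2(\Omega)$. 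Concretely, I would compare $p_n$ with the solution of the linear problem having the same initial datum, absorb the nonlinear reaction term by a Gronwall estimate on the invariant region $[0,1]$, and use the compactness of the linear solution map at time $T$ to obtain $p_n(T,\cdot)\to p^*(T,\cdot)$ strongly in $L^2(\Omega)$.

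Finally, granting $p_n(T,\cdot)\to p^*(T,\cdot)$ in $L^2(\Omega)$, the cost functional $J_T(u_0)=\tfrac12\int_\Omega(1-p(T,x))^2\,dx$ is continuous along the sequence (or at worst lower semicontinuous by Fatou), so that
$$
J_T(u_0^*)\le \liminf_{n\to\infty} J_T(u_0^n)=\inf_{u_0\in\mathcal{V}_{C,M}} J_T(u_0).
$$
Since $u_0^*\in\mathcal{V}_{C,M}$, this forces equality, and $u_0^*$ is a minimizer, which proves the lemma.
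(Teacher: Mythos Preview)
Your overall strategy—direct method, weak-star compactness of $\mathcal{V}_{C,M}$, uniform parabolic energy bounds, and Aubin--Lions—matches the paper's. The genuine gap is exactly at the identification step you flagged, and your proposed smoothing fix does not close it.

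Parabolic smoothing (or Aubin--Lions) indeed yields strong compactness of $(p_n(T,\cdot))_n$ in $L^2(\Omega)$, but the limit is $p_\phi(T,\cdot)$, where $\phi$ is the weak limit of the initial data $G^{-1}(u_0^n)$, \emph{not} $p^*(T,\cdot)$ with $p^*$ the solution driven by the weak-star limit $u_0^*$. Since $G^{-1}$ is strictly concave ($g$ decreasing under \eqref{assump:fg}), one has $\phi\neq G^{-1}(u_0^*)$ in general: if $u_0^n$ oscillates between $0$ and $M$ with weak-star limit $M/2$, then $G^{-1}(u_0^n)$ has weak limit $\tfrac12 G^{-1}(M)<G^{-1}(M/2)$. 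So your claimed convergence $p_n(T,\cdot)\to p^*(T,\cdot)$ fails, and the final chain $J_T(u_0^*)\le\liminf J_T(u_0^n)$ is not established.

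The paper's way out is to abandon $u_0^*$ as the candidate minimizer and instead take $\tilde u_0:=G(p^\infty(0^+,\cdot))=G(\phi)$, where $p^\infty$ is the limit of the states obtained by compactness. Then $p^\infty=p_{\tilde u_0}$ by construction, and $J_T(\tilde u_0)\le\liminf J_T(u_0^n)$ follows from weak convergence of $p_n(T,\cdot)$ and lower semicontinuity of the $L^2$ norm. The remaining point is $\tilde u_0\in\mathcal{V}_{C,M}$: the pointwise bound $0\le\tilde u_0\le M$ comes from monotonicity of $G$, and for the integral constraint the missing ingredient in your argument is that $G$ is \emph{convex} (since $G'=1/g$ is increasing), so that by weak lower semicontinuity of convex integral functionals,
\[
\int_\Omega \tilde u_0\,dx=\int_\Omega G(\phi)\,dx\le\liminf_{n\to\infty}\int_\Omega G\bigl(G^{-1}(u_0^n)\bigr)\,dx=\liminf_{n\to\infty}\int_\Omega u_0^n\,dx\le C.
\]
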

\begin{proof}
In what follows, we will denote by $p_{u_0}$ the solution to Problem~\eqref{eq:psimple} associated to the control choice $u_0$.
Let $(u^n_0)_{n\in \NN}\in (\mathcal{V}_{C,M})^\mathbb{N} $ be a minimizing sequence for Problem \eqref{prob:reduced}.
Notice that, since $u_0^n$ belongs to $\mathcal{V}_{C,M}$ and the range of $G^{-1}$ is included in $[0,1[$, we infer from the maximum principle that $0 \leq p_{u_0^n}(t,\cdot) <1$ for a.e. $t\in [0,T]$ so that $(J_T (u_0^n))_{n\in \NN}$ is bounded and $\inf_{u_0 \in \mathcal{V}_{C,M}}J_T(u_0)$ is finite.

Since the class $\mathcal{V}_{C,M}$ is closed for the $L^\infty$ weak-star topology,  there exists $u_0^\infty\in \mathcal{V}_{C,M}$ such that, up to a subsequence, $u_0^n$ converges weakly-star to $u_0^\infty$ in $L^\infty$. Here and in the sequel, we will denote similarly with a slight abuse of notation a given sequence and any subsequence.

Multiplying the main equation of \eqref{eq:psimple} by $p_{u_0^n}$ and integrating by parts, we infer from the above estimates the existence of a positive constant $C$ such that
\[
\frac{1}{2}\int_0^T \int_\Omega \partial_t (p_{u_0^n}(t,x)^2) dx dt + D\int_0^T \int_\Omega |\nabla p_{u_0^n}(t,x)|^2 dx dt \leq C
\]
for every $n\in \mathbb{N}$, which also reads
\[
\frac{1}{2} \int_\Omega \left[(p_{u_0^n}(t,x))^2)\right]_{t=0}^{t=T}\, dx + D\int_0^T \int_\Omega |\nabla p_{u_0^n}(t,x)|^2 \, dx dt \leq C
\]
for every $n\in \mathbb{N}$.

By using the pointwise bounds on $p_{u_0^n}$, one gets that $p_{u_0^n}$ is uniformly bounded in $L^2(0,T;H^1(\Omega))$. Furthermore, according to \eqref{eq:psimple}, the sequence $\partial_t p_{u_0^n}$ is uniformly bounded in $L^2(0,T;W^{-1,1}(\Omega))$.
The Aubin-Lions theorem (see \cite{Simon} and Lemma~\ref{Aubin-Lions}) yields that $p_{u_0^n}$ converges (up to a subsequence) to $p^\infty \in L^2(0,T;H^1(\Omega))$, strongly in $L^2(0, T; L^2(\Omega))$ and weakly in $L^2(0, T; H^1(\Omega))$. 
Furthermore, using that the sequence $\partial_t p_{u_0^n}$ is uniformly bounded in $L^2(0,T;W^{-1,1}(\Omega))$ also yields that $\partial_t p^\infty$ belongs to $L^2(0,T;W^{-1,1}(\Omega))$.
Furthermore, reproducing the standard variational argument used in the proof of Corollary~\ref{proof:cor:CVJTeps} to show the weak convergence of $p^\eps(T,\cdot)$ to $p^0(T,\cdot)$ in $L^2(\Omega)$ as $\eps\searrow 0$, one shows that for all $t\in [0,T]$, $p_{u_0^n}(t,\cdot)$ also converges weakly, up to a subsequence, to $p^\infty(t,\cdot)$ in $L^2(\Omega)$.

Passing to the limit in \eqref{eq:psimple} yields that $p^\infty$ is a weak solution to
\begin{equation*}
 \left\{
 \begin{aligned}
\displaystyle \partial_t p^\infty(t,x) - D\Delta p^\infty(t,x) &= f(p^\infty(t,x)), &&\quad t\in (0,T), \quad x\in \Omega, \\
\partial_\nu p^\infty (t,x) &= 0, && \quad t\in (0,T), \quad x\in \partial\Omega.  \\
 \end{aligned}
\right.
\end{equation*}

It is standard that any solution to this bistable reaction-diffusion equation is continuous in time.

It remains to show that $u^\infty_0= G(p^\infty(0^+, .))$. Note first that $G$ is convex \blue{since $g$ is decreasing on $(0,1)$ under assumption \eqref{assump:fg}}. 
According to the convergence results above, since $p_{u_0^n}(0,\cdot)$ converges weakly  (up to a subsequence) to $p^\infty(0,\cdot)$ in $L^2(\Omega)$ and since $u_0^n := G(p_{u_0^n}(0^+, .))$, we get that $G(p^\infty(0,\cdot))=u_0^\infty$ and hence, $p^\infty=p_{u_0^\infty}$ by passing to the limit as $n\to +\infty$ in the variational formulation on $p_{u_0^n}$.

Finally, let us show that $u^\infty_0$ belongs to $\mathcal{V}_{C,M}$.
Since the derivative of $G$ is $1/g$ which is positive, $G$ is increasing and therefore, one has $0\leq u^\infty_0 \leq  M$ a.e. in $\Omega$. Moreover, since $\int_\Omega u_0^n \leq C$ rewrites $\langle u_0^n,1\rangle_{L^\infty,L^1}\leq C$, we immediately get that the integral condition is satisfied by $u_0^\infty$.

Therefore, $u^\infty_0$ solves Problem \eqref{prob:reduced}.
\end{proof}
\subsubsection{First and second order optimality conditions}

We now state first and second order optimality conditions. \red{The objective is twofold:} first, we will analyze the optimality of constant solutions, and second, we will use them to derive adapted numerical algorithms. 

\begin{definition}
Let $u_0\in  \mathcal{V}_{C,M} $. A function $h$ in $L^{\infty}(\Omega)$ is said to be an \textbf{admissible perturbation} of $u_0$ in $\mathcal{V}_{C,M}$ if, for every sequence of positive real numbers $(\varepsilon_n)_{n\in\NN}$ decreasing to 0, there exists a sequence of functions $h^n$ converging to $h$ for the weak-star topology of \red{$L^{\infty}(\Omega)$} as $n \rightarrow +\infty$, and such that $u+\varepsilon_n h^n \in \mathcal{V}_{C,M}$ for every $n \in \mathbb{N}$.
\end{definition}

\begin{proposition}\label{prop:adjoint}
Let $u_0 \in \mathcal{V}_{C,M}$ and $h$ be an admissible perturbation. The functional $J_T$ is two times differentiable in the sense of Fr\'echet at $u_0$ and one has
$$
dJ_T(u_0)\cdot h=\int_{\Omega} q(0^+,x)(G^{-1})' (u_0(x))h(x) \, dx,
$$
where $q$ denotes the adjoint state, solving the backward p.d.e. 
\begin{equation}\label{eq:q}
\left\lbrace
\begin{array}{lll}
- \partial_t q (t,x) - D\Delta q (t,x) = f'(p(t,x))q(t,x),                      
 \quad &(t,x) \in (0,T) \times \Omega, 
 \\
\partial_\nu q(t,x)= 0,                                  
\quad &(t,x) \in (0,T) \times \partial \Omega,\\
q (T,x)=p(T,x) - 1
\quad &x \in \Omega
\end{array}\right.
\end{equation}
and $p$ denotes the solution to \eqref{eq:psimple} associated to the control choice $u_0$.

Furthermore, the second order derivative of $J_T$ at $u_0$ reads
\begin{eqnarray*}
d^2J_T(u_0)(h,h)&=&-\int_{\Omega}  \int_0^T \dot{p}(t,x)^2 f''(p(t,x))q(t,x) \, dt dx+\int_{\Omega}  \dot{p}(T,x)^2 \, dx\\
&& +\int_\Omega q(0^+,x)(G^{-1})''(u_0(x))h(x)^2\, dx
\end{eqnarray*}
for every admissible perturbation $h$, where $\dot{p}$ denotes the solution to the linear system
\begin{equation}\label{pdot}
\left\lbrace
\begin{array}{lll}
\partial_t \dot{p} (t,x) - D \Delta \dot{p}(t,x) = \dot{p}(t,x) f'(p(t,x)) & (t,x)\in (0,T)\times \Omega, \\
\partial_\nu \dot{p}(t,x)= 0 & (t,x)\in (0,T)\times \partial \Omega,                                      
\\
\dot{p}(0^+,x)= (G^{-1})'(u_0)h(x) & x\in \Omega.
\end{array}\right.
\end{equation}
\end{proposition}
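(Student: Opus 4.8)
The plan is to compute the first and second Fréchet derivatives of $J_T$ by the classical adjoint-state method. First I would establish differentiability of the control-to-state map $u_0 \mapsto p_{u_0}$. The chain here is $u_0 \mapsto p(0^+,\cdot) = G^{-1}(u_0) \mapsto p(t,\cdot)$, so I would differentiate each link. Since $G^{-1}$ is $C^2$ (because $g$ is $C^1$ and positive on $[0,1)$ under \eqref{assump:fg}), the map $u_0 \mapsto p(0^+,\cdot)$ is smooth pointwise. For the parabolic flow, standard results on bistable reaction-diffusion equations (continuous dependence, linearization of $f \in C^2$) give that $u_0 \mapsto p$ is twice Fréchet differentiable into $L^2(0,T;H^1(\Omega))$ with $C(\overline{[0,T]};L^2)$ regularity. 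The first variation $\dot p$ in direction $h$ solves the linearized system \eqref{pdot}, obtained by formally differentiating \eqref{eq:psimple}; the only nonstandard point is the initial datum, which by the chain rule is $(G^{-1})'(u_0)h$.

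With $\dot p$ in hand, I would compute $dJ_T(u_0)\cdot h = \int_\Omega (p(T,x)-1)\dot p(T,x)\,dx$ directly from the definition of $J_T$. The goal is to eliminate $\dot p$ in favor of $u_0$ and $h$. This is precisely what the adjoint state $q$ accomplishes. I would introduce $q$ solving the backward system \eqref{eq:q}, whose terminal condition $q(T,\cdot)=p(T,\cdot)-1$ is chosen exactly so that $\int_\Omega q(T)\dot p(T) = \int_\Omega(p(T)-1)\dot p(T)$. Then I multiply the $\dot p$-equation by $q$, integrate over $(0,T)\times\Omega$, and integrate by parts in both $t$ and $x$. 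The boundary terms in $x$ vanish by the Neumann conditions on both $p$ and $q$; the terms involving $f'(p)\dot p\,q$ cancel between the two equations (this is the defining feature of the adjoint). What survives is the identity
$$
\int_\Omega (p(T)-1)\dot p(T)\,dx = \int_\Omega q(0^+,x)\,\dot p(0^+,x)\,dx = \int_\Omega q(0^+,x)(G^{-1})'(u_0(x))h(x)\,dx,
$$
which is the claimed formula for $dJ_T(u_0)\cdot h$.

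For the second derivative I would differentiate the first-order formula once more in the direction $h$. Writing $dJ_T(u_0)\cdot h$ as a function of $u_0$, its differential picks up three contributions: from the variation of $q(0^+,\cdot)$, from the variation of $(G^{-1})'(u_0)$, and the implicit dependence through the nonlinearity. The cleanest route is to differentiate the pairing $\int_\Omega(p(T)-1)\dot p(T)$ directly, giving $\int_\Omega \dot p(T)^2 + \int_\Omega (p(T)-1)\ddot p(T)$, where $\ddot p$ is the second variation solving a linearized system with source $f''(p)\dot p^2$ and initial datum $(G^{-1})''(u_0)h^2$. I then re-apply the adjoint identity to the $\ddot p$-term: multiplying the $\ddot p$-equation by $q$ and integrating by parts converts $\int_\Omega(p(T)-1)\ddot p(T)$ into the adjoint-weighted source term $-\int_\Omega\int_0^T \dot p^2 f''(p)q\,dt\,dx$ plus the initial-datum contribution $\int_\Omega q(0^+)(G^{-1})''(u_0)h^2\,dx$. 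Collecting the three pieces yields exactly the stated expression for $d^2J_T(u_0)(h,h)$.

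The main obstacle is not any single formula but the rigorous justification of differentiability and of the integrations by parts at the regularity level available here: $p\in L^2(0,T;H^1)$ with $\partial_t p\in L^2(0,T;W^{-1,1})$ is borderline, so one must argue that the adjoint $q$ has enough regularity (it does, by parabolic smoothing for the linear backward equation with bounded coefficient $f'(p)$) and that the traces $\dot p(T,\cdot)$, $q(0^+,\cdot)$ make sense in $L^2(\Omega)$ and the weak-form manipulations are licit. The role of assumption \eqref{assump:fg}, in particular $f\in C^2$ and the positivity of $g$ on $[0,1)$, is essential precisely to guarantee $f''(p)$ and $(G^{-1})''(u_0)$ are well-defined and bounded along the trajectory, so that all three terms in the second-order formula are finite for every $h\in L^\infty(\Omega)$.
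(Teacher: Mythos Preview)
Your treatment of the first derivative is essentially identical to the paper's: linearize to obtain $\dot p$, introduce the adjoint $q$ with terminal datum $p(T)-1$, and use the duality identity obtained by pairing the $\dot p$-equation with $q$ and integrating by parts. Nothing to add there.

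For the second derivative, however, you take a genuinely different route. The paper differentiates the \emph{adjoint} state, introducing $\dot q$ as the Gateaux derivative of $\varepsilon\mapsto q_{u_0+\varepsilon h}$, deriving the linear backward equation it satisfies (with source $\dot p\,f''(p)\,q$ and terminal datum $\dot p(T)$), and then computing $\int_\Omega \dot q(0^+)\,\dot p(0^+)$ by a second duality argument between $\dot p$ and $\dot q$. You instead differentiate the \emph{state} once more, introducing the second variation $\ddot p$ (which solves the linearized equation with source $f''(p)\dot p^2$ and initial datum $(G^{-1})''(u_0)h^2$), and reuse the same adjoint $q$ to convert $\int_\Omega q(T)\ddot p(T)$ into a bulk term plus an initial contribution. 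Both routes are standard and correct; yours has the advantage of requiring only the pair $(\dot p,q)$ already computed for the first derivative, together with $\ddot p$, whereas the paper's route introduces a third auxiliary function $\dot q$. On the other hand, the paper's $\dot q$-computation makes the structure $d^2 J_T(u_0)(h,h)=\int_\Omega \dot q(0^+)(G^{-1})'(u_0)h+\int_\Omega q(0^+)(G^{-1})''(u_0)h^2$ transparent as a direct differentiation of the first-order formula, which is conceptually clean. Either way, the integration-by-parts bookkeeping is of the same nature and difficulty.
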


\begin{proof}

As a preliminary remark, we claim that for any element $u$ of the set $\mathcal{V}_{C,M}$ and any admissible perturbation $h$, the mapping $ u\in \mathcal{V}_{C,M}\mapsto p\in L^2(0,T;H^1(\Omega))$, where $p_{u}$ denotes the unique weak solution of \eqref{eq:psimple}, is  differentiable in the sense of G\^ateaux at $u$ in direction $ h $. Indeed, proving such a property is standard in calculus of variations and rests upon the implicit function theorem.

Let $u\in \mathcal{V}_{C,M}$.
Let $h$ denote an admissible perturbation. 
Observing that $p_{u+\varepsilon h}$ solves the system
\begin{equation*}
\left\lbrace
\begin{array}{lll}
\partial_t p_{u+\varepsilon h}(t,x) - D\Delta p_{u+\varepsilon h}(t,x) = f(p_{u+\varepsilon h}(t,x)),  
 \quad \quad &(t,x) \in (0,T) \times \Omega, 
 \\
\partial_\nu p_{u+\varepsilon h}(t,x)= 0,                                                             
 \quad \quad &(t,x) \in (0,T) \times \partial \Omega,  
\\
p_{u+\varepsilon h} (0^+,x)=G^{-1} (u+\varepsilon h(x)),    
\quad &x\in \Omega.
\end{array}\right.
\end{equation*}
Let $\dot{p}$ denote the derivative of $\varepsilon \mapsto p_{}(u+\varepsilon h)$ at $\varepsilon=0$. Standard computations yield that $\dot{p}$ solves the linearized reaction-diffusion system
\begin{equation}\label{dot0}
\left\lbrace
\begin{array}{lll}
\partial_t \dot{p} (t,x) - D \Delta \dot{p}(t,x) = \dot{p}(t,x) f'(p_{u}(t,x)), 
 \quad \quad &(t,x) \in (0,T) \times \Omega, 
 \\
\partial_\nu \dot{p}(t,x)= 0,                                                                      
 \quad \quad &(t,x) \in (0,T) \times \partial \Omega,  
\\
\dot{p} (0^+,x)=(G^{-1})' (u(x)) h(x),
\quad &x\in \Omega.
\end{array}\right.
\end{equation}
Furthermore, according to the chain rule, one has 
\begin{equation*}
dJ_T(u)\cdot h =\lim_{\varepsilon \rightarrow 0} \frac{J_T(u+\varepsilon h)-J_T(u)}{\varepsilon}=
 \int_{\Omega}\dot{p} (T,x) \left(
p_{u} (T,x) - 1 \right) \, dx.
\end{equation*}

Let us multiply the main equation of \eqref{dot0}  by $q_{u}$, and integrate then two times by parts on $(0,T)\times \Omega$. One thus gets 
\begin{multline}\label{intdoti2}
\int_0^T \int_{\Omega}    \partial_t \dot{p} (t,x) q_{u}(t,x) ~dx dt
=\int_0^T \int_{\Omega}      D  \dot{p}(t,x) \Delta q_{u}(t,x) ~dx dt\\
+\int_0^T \int_{\Omega}     \dot{p}(t,x) f'(p_u(t,x)) q_{u}(t,x) ~dx dt.
\end{multline}
Similarly, let us multiply the main equation of \eqref{eq:q} by $\dot{p}$, and integrate then by parts on $(0,T)\times \Omega$. We obtain 
\begin{multline}\label{intq}
- \int_0^T \int_{\Omega}\dot{p}  (t,x)   \partial_t q_{u} (t,x)~ dx dt
=\int_0^T \int_{\Omega}D \dot{p} (t,x)   \Delta q_{u} (t,x)  ~ dx dt\\
+ \int_0^T \int_{\Omega}  \dot{p}  (t,x) f'(p_u(t,x))q_{u}(t,x) ~ dx dt.
\end{multline}
By comparing \eqref{intdoti2} and \eqref{intq}, we infer that
\begin{equation*}
 \int_0^T \int_{\Omega}\left(\dot{p}  (t,x)   \partial_t q_{u} (t,x)+ \partial_t \dot{p} (t,x) q_{u}(t,x) \right) \, dx dt=0
\end{equation*}
leading to the following duality identity:
\begin{equation*}
\int_{\Omega}  \left(  \dot{p} (T,x)  q_{u} (T,x)   -  \dot{p}  (0,x)  q_{u} (0,x)  \right) dx=0.
\end{equation*}
By using \eqref{dot0} and \eqref{eq:q}, we rewrite the expression above as
\begin{equation*}
\int_{\Omega}   \dot{p} (T,x)(  p_u(T,x) - 1 ) =\int_{\Omega}  (G^{-1})' (u(x)) h(x)  q_{u} (0,x)  dx.
\end{equation*}
Thus the desired expression of the derivative follows.

Let us now compute $d^2J_T(u_0)$.
Since $J_T$ is two times differentiable, one has
\begin{align*}
&d^2J_T(u_0)(h,h)\\&= 
\lim_{\varepsilon \rightarrow 0}\frac{dJ_T(u_0+\varepsilon h)\cdot h-dJ_T(u_0)\cdot h}{\varepsilon} \\
&=\lim_{\varepsilon \rightarrow 0}\frac{\int_{\Omega} q_{u_0+\varepsilon h}(0^+,x) (G^{-1})'(u_0+\varepsilon k)(x) h(x) \, dx-\int_{\Omega} q_{u_0}(0^+,x) (G^{-1})'(u_0(x))h(x)  ~dx}{\varepsilon} \\
&=\int_{\Omega} \dot{q}(0^+,x) (G^{-1})'(u_0(x))h(x)  \, dx+\int_\Omega q_{u_0}(0^+,x)h(x)^2(G^{-1})''(u_0(x))\, dx,
\end{align*}
where $\dot{q}$ is given by
$$
\dot{q}(t,x)=\lim_{\varepsilon \rightarrow 0}\frac{q_{u_0+\varepsilon h}(t,x)- q_{u_0}(t,x) }{\varepsilon}.
$$
A standard reasoning enables us to prove that $\dot{q}$ solves the linear p.d.e.  
\begin{equation}\label{qdot}
\left\lbrace
\begin{array}{lll}
- \partial_t \dot{q}(t,x) - D\Delta \dot{q} (t,x) =\dot{p}(t,x) f''(p_u(t,x))q(t,x)   + f'(p_u(t,x))\dot{q}(t,x) ,& (t,x)\in (0,T)\times \Omega, \\
\partial_\nu \dot{q} (t,x)= 0, & (t,x)\in (0,T)\times \partial \Omega, \\
\dot{q} (T,x)=\dot{p}(T,x), & x\in \Omega,
\end{array}\right.
\end{equation}
with $\dot{p}$, the solution of the linear p.d.e. \eqref{pdot}.
One has
\begin{align*}
&\int_{\Omega} \dot{q}(0^+,x)(G^{-1})'(u_0(x)) h(x)\, dx  \\
&=\int_{\Omega} \dot{q}(0^+,x) \dot{p}(0^+,x)  ~dx   
\\
&=\int_{\Omega} \dot{q}(0^+,x) \dot{p}(0^+,x)  ~dx  
-\int_{\Omega} \dot{q}(T,x) \dot{p}(T,x)  ~dx
+\int_{\Omega} \dot{q}(T,x) \dot{p}(T,x)  ~dx
 \\
&=\int_{\Omega}  \int_0^T \partial_t \dot{p} (t,x) \dot{q} (t,x) dt dx
+\int_{\Omega}  \int_0^T \partial_t \dot{q} (t,x) \dot{p} (t,x) dt dx
+\int_{\Omega}  \dot{p}(T,x)^2  ~ dx.
\end{align*}
By using the main equation in Systems~\eqref{qdot} and \eqref{pdot}, one gets
\begin{align*}
\int_{\Omega} \dot{q}(0^+,x)(G^{-1})'(u_0(x)) h(x)\, dx  &=\int_{\Omega}  \int_0^T \left[ D \Delta \dot{p}(t,x) + \dot{p}(t,x) f'(p_u(t,x))   \right] \dot{q} (t,x)\, dt dx
\\
&+\int_{\Omega}  \int_0^T  \left[  - D\Delta \dot{q} (t,x) -\dot{p}(t,x) f''(p_u(t,x))q_u(t,x)  \right.\\
&\left. - f'(p_u(t,x))\dot{q}(t,x)   \right] \dot{p} (t,x) \, dt dx +\int_{\Omega}  \dot{p}(T,x)^2  ~ dx.
\end{align*}
The Green formula finally yields
$$
\int_{\Omega} \dot{q}(0^+,x)(G^{-1})'(u_0(x)) h(x)\, dx  =\int_{\Omega}  \int_0^T   -\dot{p}(t,x)^2 f''(p_u(t,x))q_u(t,x)  dt dx
+\int_{\Omega}  \dot{p}(T,x)^2  ~ dx,
$$
whence the expected expression for the second order derivative.
\end{proof}

Let us now derive first and second order optimality conditions for this problem.

\begin{proposition}[\red{Necessary first and second orders optimality conditions}]\label{theo:1orderopt}
For all $u_0\in\mathcal{V}_{C,M}$ consider $\psi[u_0]$ denote the function defined on $\Omega$ by
$$
\psi[u_0](\cdot)=q(0^+,\cdot)(G^{-1})' (u_0(\cdot)),
$$
where $q$ solves the adjoint system \eqref{eq:q} associated to the control choice $u_0$.

Let $u_0^*$ be a solution to Problem~\eqref{prob:reduced}. 
Then, there exists \red{$\lambda\in [0,+\infty)$} such that 
\begin{equation}\label{eq:cond opt}
\begin{array}{ll}
\text{on }\{u_0^*=M\}, & \psi[u_0^*]\leq -\lambda,\\
\text{on }\{u_0^*=0\}, & \psi[u_0^*]\geq -\lambda,\\ 
\text{on }\{0<u_0<M\}, & \psi[u_0^*]= -\lambda,
\end{array}
\end{equation}
(called \red{necessary first order optimality condition}) or equivalently, the function $\Lambda$ defined by
$$
\Lambda : x\in \Omega \mapsto 
\min \{ u_0^*(x), \max \{ u_0^*(x)-M, \psi[u_0^*](x)+\lambda \}\}
$$
vanishes identically in $\Omega$. Moreover, one has $\lambda \left(\int_\Omega u_0^*(x)\, dx-C\right)=0$ (slackness condition).

Moreover, the second order optimality conditions for this problem read:
$d^2J_T(u_0^*)(h,h)\geq 0$ for every admissible perturbation $h$ such that $dJ_T(u_0^*)\cdot h=0$.
\end{proposition}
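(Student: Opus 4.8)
The plan is to derive both the first- and second-order conditions from Proposition~\ref{prop:adjoint} together with the convexity of $\mathcal{V}_{C,M}$. The starting point for the first-order part is that, for any $v\in\mathcal{V}_{C,M}$, the direction $h=v-u_0^*$ is an admissible perturbation: since $u_0^*+\varepsilon(v-u_0^*)=(1-\varepsilon)u_0^*+\varepsilon v\in\mathcal{V}_{C,M}$ for $\varepsilon\in[0,1]$ by convexity, the constant sequence $h^n=h$ works. As $u_0^*$ is optimal and $J_T$ is Fr\'echet-differentiable, this yields the variational inequality $\int_\Omega \psi[u_0^*](v-u_0^*)\,dx\ge 0$ for all $v\in\mathcal{V}_{C,M}$; equivalently, $u_0^*$ minimizes the \emph{linear} functional $v\mapsto\int_\Omega \psi[u_0^*]\,v\,dx$ over $\mathcal{V}_{C,M}$. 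It then remains to read off the multiplier of this linear program, whose only coupling constraint is $\int_\Omega u_0\le C$.

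To produce $\lambda$, I would test the variational inequality against carefully chosen perturbations, all admissible once one uses the freedom in the recovery sequence $h^n$ to avoid saturating the box near $\{u_0^*=0\}$ and $\{u_0^*=M\}$. First, mass-preserving swaps $h=\mathds{1}_{B}-\mathds{1}_{A}$ with $A\subseteq\{u_0^*>0\}$, $B\subseteq\{u_0^*<M\}$ and $|A|=|B|$ give $\int_B\psi[u_0^*]\ge\int_A\psi[u_0^*]$; letting $A,B$ concentrate at Lebesgue points yields the separation $\operatorname*{ess\,sup}_{\{u_0^*>0\}}\psi[u_0^*]\le\operatorname*{ess\,inf}_{\{u_0^*<M\}}\psi[u_0^*]$. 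Second, the pure decrease $h=-\mathds{1}_{A}$ on $A\subseteq\{u_0^*>0\}$ is always admissible (it lowers the integral) and forces $\psi[u_0^*]\le0$ a.e. on $\{u_0^*>0\}$; when $\int_\Omega u_0^*<C$ the pure increase $h=\mathds{1}_{B}$ on $B\subseteq\{u_0^*<M\}$ is admissible too and forces $\psi[u_0^*]\ge0$ a.e. on $\{u_0^*<M\}$. Choosing any $-\lambda$ in the interval $[\operatorname*{ess\,sup}_{\{u_0^*>0\}}\psi[u_0^*],\ \min\{0,\operatorname*{ess\,inf}_{\{u_0^*<M\}}\psi[u_0^*]\}]$, which is nonempty by the two displayed inequalities, produces $\lambda\ge0$ satisfying the three pointwise relations \eqref{eq:cond opt}. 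Complementary slackness then comes for free: whenever the constraint is inactive one checks that $0$ lies in this interval, so the choice $\lambda=0$ is available, while in the active case $\int_\Omega u_0^*-C=0$ makes the slackness identity hold for any $\lambda$.

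The equivalence with $\Lambda\equiv 0$ is a pointwise verification on the three sets $\{u_0^*=0\}$, $\{u_0^*=M\}$, $\{0<u_0^*<M\}$: on each of them the inner $\max$ and outer $\min$ collapse (using $u_0^*-M<0$ on the first and interior sets and $u_0^*>0$ on the last two), turning $\Lambda(x)=0$ into exactly $\psi[u_0^*]\ge-\lambda$, $\psi[u_0^*]\le-\lambda$ and $\psi[u_0^*]=-\lambda$, respectively.

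For the second-order condition, I would fix an admissible $h$ with $dJ_T(u_0^*)\cdot h=0$, pick $\varepsilon_n\downarrow0$ and $h^n\rightharpoonup h$ weak-$\star$ with $u_0^*+\varepsilon_n h^n\in\mathcal{V}_{C,M}$, and combine optimality with the second-order Taylor expansion provided by Proposition~\ref{prop:adjoint}:
$$0\le J_T(u_0^*+\varepsilon_n h^n)-J_T(u_0^*)=\varepsilon_n\int_\Omega\psi[u_0^*]h^n\,dx+\frac{\varepsilon_n^2}{2}\,d^2J_T(u_0^*)(h^n,h^n)+o(\varepsilon_n^2),$$
where the variational inequality already gives $\int_\Omega\psi[u_0^*]h^n\,dx\ge0$. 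This is where essentially all the difficulty lies, and the obstacle is twofold. On the one hand, the cross term $\int_\Omega\psi[u_0^*]h^n$ only tends to $0$, whereas one needs it to be $o(\varepsilon_n)$ to isolate the quadratic term; this is arranged by selecting the recovery sequence inside the critical cone (perturbing only where $\psi[u_0^*]=-\lambda$) and keeping $\int_\Omega h^n\le0$ when the constraint is active, so that $\int_\Omega\psi[u_0^*]h^n=-\lambda\int_\Omega h^n$ is controlled through complementary slackness. On the other hand, one must pass to the limit $d^2J_T(u_0^*)(h^n,h^n)\to d^2J_T(u_0^*)(h,h)$: the two terms carried by $\dot p$ (the solution of \eqref{pdot}) are weakly continuous because $h\mapsto\dot p$ is compact by parabolic smoothing and Aubin--Lions (Lemma~\ref{Aubin-Lions}), so $\dot p^n\to\dot p$ strongly in $L^2$; the genuinely delicate term is the quadratic $\int_\Omega q(0^+,\cdot)(G^{-1})''(u_0^*)(h^n)^2\,dx$, which is not weak-$\star$ continuous and calls for a lower-semicontinuity argument exploiting the sign of $q(0^+,\cdot)(G^{-1})''(u_0^*)$. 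Controlling this last term is the main obstacle of the proof.
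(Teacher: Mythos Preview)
Your first-order argument is correct and considerably more explicit than the paper's. The paper simply introduces the Lagrangian $\mathcal{L}(u,\lambda)=J_T(u)+\lambda(\int_\Omega u-C)$, writes the Euler inequality $\int_\Omega(\psi[u_0^*]+\lambda)h\,dx\ge 0$ for admissible $h$, and then invokes ``standard analysis in optimal control theory'' (citing \cite{MR1111666}) to read off the three pointwise relations and the slackness condition. You instead build $\lambda$ by hand from the variational inequality, using mass-preserving swaps to obtain the separation $\operatorname*{ess\,sup}_{\{u_0^*>0\}}\psi\le \operatorname*{ess\,inf}_{\{u_0^*<M\}}\psi$ and one-sided perturbations to pin down the sign. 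This buys a self-contained argument that does not appeal to abstract KKT machinery; the price is that the recovery sequences on $\{u_0^*\ge\delta\}$ and $\{u_0^*\le M-\delta\}$ with $\delta\to 0$ require a bit more bookkeeping than you indicate. Your verification of the equivalence with $\Lambda\equiv 0$ matches the paper's.

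On the second-order condition you go far beyond the paper, which disposes of it in one line (``standard, see \cite{JBHU}'') without any discussion of recovery sequences or weak limits. The technical obstacle you isolate---passing to the limit in $\int_\Omega q(0^+,\cdot)(G^{-1})''(u_0^*)(h^n)^2\,dx$ under weak-$\star$ convergence---is genuine, and your suggestion to exploit the sign of the coefficient is the right instinct: from $(G^{-1})''=g'(G^{-1})\,g(G^{-1})<0$ and $q(0^+,\cdot)<0$ (by the comparison argument used later in Corollary~\ref{cor:0959}) the coefficient is nonnegative, so the map $h\mapsto\int_\Omega q(0^+,\cdot)(G^{-1})''(u_0^*)\,h^2$ is convex and weakly lower semicontinuous. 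Combined with the strong $L^2$ convergence of $\dot p^n$ that you already argued, this gives $d^2J_T(u_0^*)(h,h)\le\liminf_n d^2J_T(u_0^*)(h^n,h^n)$, which is the inequality needed once the first-order cross term has been neutralised. So your ``main obstacle'' is in fact resolvable along the line you sketch; the paper simply does not enter into any of this.
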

\begin{proof}
Let us introduce the Lagrangian functional associated to Problem~\eqref{prob:reduced}, given by
$$
\mathcal{L}:(u,\lambda)\in \mathcal{V}_{C,M}\times \RR_+\mapsto J_T(u)+\lambda \left(\int_{\Omega} u - C\right).
$$
According to Proposition~\ref{prop:adjoint}, and denoting by $d_{un}$ the differential operator with respect to the variable $u$, the Euler inequation associated to Problem~\eqref{prob:reduced} reads: $d_{un}\mathcal{L}(u,\lambda)\cdot h\geq 0$ for all admissible perturbation $h$ of $u_0^*$ in $\{u_0\in L^\infty(0,T), \ 0\leq u_0\leq M\text{ a.e. in }\Omega\}$. This can be rewritten
$$
\int_{\Omega} \left( \psi[u_0^*](x)+\lambda \right) h(x) \, dx\geq 0 
$$
for all functions $h$ as above. The analysis of such optimality condition is standard in optimal control theory (see for example \cite{MR1111666}) and yields:
$$
\text{on }\{u_0^*=M\}, \quad \psi[u_0^*]\leq -\lambda,\quad \text{ on }\{u_0^*=0\}, \quad \psi[u_0^*]\geq -\lambda,\quad 
\text{on }\{0<u_0<M\}, \quad \psi[u_0^*]= -\lambda.
$$
Moreover, one has $\lambda \left(\int_\Omega u_0^*(x)\, dx-C\right)=0$ (slackness condition). It remains to show that such conditions also rewrite $\Lambda(\cdot)=0$ in $\Omega$. It is straightforward that if the optimality conditions above are satisfied, then $\Lambda(\cdot)=0$ in $\Omega$. Let us examine the converse sense, assuming that $\Lambda(\cdot)=0$ in $\Omega$. Then, for a.e. $x\in \{u_0^*=0\}$, one has 
$$
\max \{ u_0^*(x)-M, \psi[u_0^*](x)+\lambda \}=\max \{-M, \psi[u_0^*](x)+\lambda \} \geq 0
$$
and thus,  $\psi[u_0^*](x)\geq -\lambda$. The analysis is exactly similar on the set $\{u_0^*=M\}$.
Finally, if $x$ denotes a Lebesgue point of the $\{0<u_0^*<M\}$, one has necessarily 
$$
\max \{ u_0^*(x)-M, \psi[u_0^*](x)+\lambda \}=0
$$
and therefore, $\psi[u_0^*](x)=-\lambda$. This concludes the first part of this proposition.
The second part is standard (see e.g. \cite{JBHU}).
\end{proof}
We infer from this result that either the pointwise or the integral constraint is saturated by every minimizer $u_0^*$.
\begin{corollary}\label{cor:0959}
Let $u_0^*$ be a solution to Problem~\eqref{prob:reduced}. Then, one has necessarily
$$
\int_\Omega u_0^*(x)\, dx=\min \{C,M|\Omega|\}.
$$
\end{corollary}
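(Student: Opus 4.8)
The inequality $\int_\Omega u_0^*\leq \min\{C,M|\Omega|\}$ holds for free: the integral constraint gives $\int_\Omega u_0^*\leq C$, while the pointwise bound $u_0^*\leq M$ gives $\int_\Omega u_0^*\leq M|\Omega|$. The whole content of the statement is therefore the reverse inequality, which I would obtain by combining the first order optimality conditions of Proposition~\ref{theo:1orderopt} with a sign analysis of the adjoint state.

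The crucial preliminary step is to establish that $q(0^+,\cdot)<0$ on $\Omega$, where $q$ is the solution of the adjoint system \eqref{eq:q} associated with $u_0^*$. Its terminal datum is $q(T,\cdot)=p(T,\cdot)-1$, which is strictly negative since the solution of \eqref{eq:psimple} satisfies $p(T,\cdot)<1$. Reversing time through $\tau=T-t$ turns \eqref{eq:q} into a forward uniformly parabolic equation with Neumann boundary conditions and bounded zeroth order coefficient $f'(p)$ (bounded because $f\in C^2$ and $0\leq p\leq 1$), with strictly negative initial datum. A strong maximum principle then yields $q(t,\cdot)<0$ for every $t<T$, and in particular $q(0^+,\cdot)<0$. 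Since $(G^{-1})'=g\circ G^{-1}$ with $g$ positive on $[0,1)$ (equivalently $G$ strictly increasing, as noted after \eqref{eq:psimple}) and $G^{-1}(u_0^*)\in[0,1)$ under \eqref{assump:fg}, the factor $(G^{-1})'(u_0^*)$ is positive, so the switching function satisfies $\psi[u_0^*]=q(0^+,\cdot)(G^{-1})'(u_0^*)<0$ on $\Omega$.

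With this strict sign in hand, I would distinguish two cases according to the Lagrange multiplier $\lambda\geq 0$ furnished by Proposition~\ref{theo:1orderopt}. If $\lambda=0$, the first order conditions require $\psi[u_0^*]\geq 0$ on $\{u_0^*=0\}$ and $\psi[u_0^*]=0$ on $\{0<u_0^*<M\}$; since $\psi[u_0^*]<0$ everywhere, both sets must have zero measure, forcing $u_0^*=M$ a.e. in $\Omega$, hence $\int_\Omega u_0^*=M|\Omega|$, and admissibility $\int_\Omega u_0^*\leq C$ then gives $M|\Omega|\leq C$, i.e.\ $\min\{C,M|\Omega|\}=M|\Omega|$. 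If $\lambda>0$, the slackness condition $\lambda\big(\int_\Omega u_0^*-C\big)=0$ forces $\int_\Omega u_0^*=C$, and then $C=\int_\Omega u_0^*\leq M|\Omega|$ gives $\min\{C,M|\Omega|\}=C$. In either case $\int_\Omega u_0^*=\min\{C,M|\Omega|\}$, which is precisely the dichotomy ``pointwise or integral constraint saturated'' announced before the statement.

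The only genuinely delicate point is the strict negativity $q(0^+,\cdot)<0$: one must ensure enough regularity of $q$ (parabolic smoothing, together with $f'(p)\in L^\infty$) to invoke the strong maximum principle, and one must know that $p(T,\cdot)<1$ strictly, which is guaranteed by the comparison argument recalled after \eqref{eq:psimple}. Everything else is direct bookkeeping of the optimality conditions; it is exactly the strict sign of $\psi[u_0^*]$ that upgrades the soft information $\psi[u_0^*]\leq 0$ into the rigid alternative on $\lambda$ pinning down the value of the integral.
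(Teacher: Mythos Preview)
Your proof is correct and rests on the same key ingredient as the paper's: the strict negativity of the switching function $\psi[u_0^*]$, obtained from the negativity of the adjoint state $q$ (the paper phrases this as ``a simple comparison argument yields that $q$ is negative'', which is your strong maximum principle step).

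The organization differs slightly. The paper splits according to the parameter regime: when $M\geq C/|\Omega|$ it argues by contradiction (if $\int_\Omega u_0^*<C$ then $\lambda=0$ by slackness, hence $\psi<0$ forces $u_0^*\equiv M$, contradicting $M|\Omega|\geq C>\int_\Omega u_0^*$), while the case $M<C/|\Omega|$ is deferred to the proof of Theorem~\ref{prop1.1}(i), where a direct comparison-principle argument shows that the constant $M$ is the \emph{unique} minimizer, bypassing optimality conditions entirely. You instead split according to the value of $\lambda$ and treat both alternatives uniformly through the first order conditions. Your route is a bit more streamlined and self-contained; the paper's has the virtue that the case $M<C/|\Omega|$ is handled by an elementary monotonicity argument that does not require knowing $\psi<0$ at all.
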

\begin{proof}
Let us first assume that $M\geq C/|\Omega|$. Let us argue by contradiction, assuming that $\int_\Omega u_0^* < C$. Let $p$ (resp. $q$) denote the solution to the direct problem \eqref{eq:psimple} (resp. the adjoint problem \eqref{eq:q}) associated to the control choice $u_0^*$. According to Theorem~\ref{theo:1orderopt} and its proof, the slackness condition implies that $\lambda=0$. Recall that one has $p(t,x)\in (0,1)$ for a.e. $(t,x)\in (0,T)\times \Omega$, as highlighted in Section~\ref{sec:OCP}, and therefore $q(T,\cdot)\in (-1,0)$ a.e. in $\Omega$. A simple comparison argument yields that $q$ is negative in $(0,T)\times \Omega$ \blue{(see e.g. \cite{conway1977comparison})}. Since $G$ is bijective and increasing, so is $G^{-1}$ and we infer that $\psi$ is negative in $\Omega$. By using Theorem~\ref{theo:1orderopt}, we get that necessarily, $u_0^*(\cdot)=M$, which is in contradiction with the assumption above on $M$ and $C$.

The case where $M<C/|\Omega|$ is solved hereafter, in the proof of Theorem~\ref{prop1.1}.
\end{proof}
\subsubsection{Optimality of constant solutions}\label{sec:optCstsol}

This section is devoted to the proof the our main results, that is Therem  \ref{prop1.1}. Let us first show $(i)$.
The proof rests upon a simple comparison argument: one shows more precisely that $\bar u_M$ solves Problem~\eqref{prob:reduced} as soon as it belongs to $\mathcal{V}_{C,M}$ which is equivalent to the condition above on the parameters.

Let $u\in \mathcal{V}_{C,M}$. 
Let $p$ and $p_M$ denote the solutions to System~\eqref{eq:psimple} corresponding respectively to the control choices $u$ and $\bar u_M$. 

Since $u$ belongs to $\mathcal{V}_{C,M}$ and $G^{-1}$ is increasing, one has $G^{-1}(u(x) )\leq G^{-1}(M)$ for a.e. $x \in \Omega$, meaning that $p(0^+,\cdot) \leq p_M(0^+,\cdot)
$ on $\Omega$. According to the parabolic comparison principle, we infer that $p(t,\cdot) \leq p_M(t,\cdot)$ on $\Omega$, for all $t \in [0,T)$, so that one gets in particular that $p^*(T,\cdot) \leq p_M(T,\cdot)$ in $\Omega$, and therefore, $J_T(\bar u_M) \leq J_T(u)$. Uniqueness follows from the monotonicity of $G$ and the comparison principle, since $0\leqslant u\leqslant M$ a.e. in $\Omega$.

Let us now prove $(ii)$. Set $c=C/|\Omega|$. According to the optimality conditions \eqref{eq:cond opt}, since $c<M$, the function $\bar u$ identically equal to the constant $c$ satisfies the first order optimality conditions if, and only if, there exists $\lambda\in \RR_+$ such that
$\psi(\cdot)=-\lambda$ in $\Omega$. Since $(G^{-1})' (\bar u(\cdot))$ is constant in $\Omega$, this is equivalent to say that ${q}(0^+,\cdot)$ is constant in $\Omega$.

First, observe that, by uniqueness of the solutions to the reaction-diffusion system \eqref{eq:psimple}, the associated solution $\bar{p}$ is constant in space. Moreover, writing $\bar{p}(t,\cdot)=\bar{p}(t)$ with a slight abuse of notation, one easily sees that $\bar{p}$ solves the ODE
\begin{equation}
\left\lbrace
\begin{array}{lll}
\bar{p}'(t) = f(\bar{p}(t)), & t\in[0,T],\\
\bar{p}(0^+)=G^{-1}(c). & 
\end{array}\right.
\label{p_bar}
\end{equation}
Standard uniqueness arguments coming from the Cauchy-Lipschitz theorem show that if $\bar{p}(0^+) \notin \{0,\theta,1\}$ (the set of roots of $f$), then $f(\bar{p}(\cdot))$ does not vanish on $[0,T]$ and has hence a constant sign.

Note that, since $c \neq 0$, one cannot have $\bar{p}(0^+)=0$. Similarly, noting that $G$ is an increasing bijection from $[0,1)$ into $[0,+\infty)$, we infer that one cannot have $\bar{p}(0^+)=1$.
Let $\bar{p}(0^+) \in (0,\theta)\cup (\theta,1)$. Then, $f(\bar{p}(0^+)) \neq 0$, and using that $\bar{p}$ has a constant sign, which allows us to write
$$
\bar{p}'(t)=f(\bar{p}(t))  \Rightarrow \forall t\in [0,T], \quad \int_{\bar{p}(0)}^{\bar{p}(t)} \frac{1}{f(u)} du = t
$$
and therefore,
$$
\bar{p}(t)=F^{-1}(t+F(G^{-1}(\bar u))),
$$
for all $t\in [0,T]$, where $F$ denotes an antiderivative of $1/f$. 
Indeed, since $f$ has a constant sign, $F$ is monotone and continuous, whence the existence of $F^{-1}$.

Proceeding similarly for the solution $\bar{q}$ to System~\eqref{eq:q} associated to $p=\bar{p}$ drives us to look for constant solutions with respect to the space variable.
Let $\bar{q}$ denote such a solution (whenever it exists). Hence, it solves
\begin{equation*}
\left\lbrace
\begin{array}{lll}
\bar{q}' (t) = -f'(\bar{p}(t)) \bar{q}(t), & t\in[0,T], \\
\bar{q} (T)=\bar{p}(T) - 1 &
\end{array}\right.
\end{equation*}
and therefore,
$$
\bar{q}(t)=(\bar{p}(T) - 1) \exp \left(  \int_t^T f'(\bar{p}(s)) ds \right)   .
$$
By uniqueness of the solution to \eqref{eq:q}, it follows that $\bar{q}$ solves \eqref{eq:q}. 

Now, if $\bar{p}(0^+) = \theta$, meaning that $\bar u = G(\theta)$, then $\bar{p}(\cdot) = \theta$ and one has $\bar{q}(t)=(\theta-1) e^{(T-t) f'(\theta)}$ for all $t\in [0,T]$.
 
All in all, we get that $\bar{q}(0^+,\cdot)$ is constant on $\Omega$ and the switching function $\psi$, which is constant, reads
$$
\psi(\cdot)=(G^{-1})'(c)(\bar{p}(T) - 1) \exp \left(  \int_0^T f'(\bar{p}(s)) ds \right)\leq 0,
$$
by using that $\bar{p}(t)\in (0,1)$ for all $t\in [0,T]$ and that $G$ is bijective and increasing.
We infer that the first order optimality conditions are satisfied by $\bar u$.

To investigate the second order optimality conditions, it is convenient to introduce the Hilbert basis $\{w_n\}_{n\in \NN^*}$ of $L^2(\Omega)$ made of the Neumann-Laplacian eigenfunctions defined by:
$$
w_1(\cdot)=\frac{1}{|\Omega|},\quad \text{and for $n\geq 2$, $w_n$ solves the p.d.e. } 
\left\lbrace
\begin{array}{lll}
-D \Delta w_n  = \lambda_n w_n,\quad \quad &\text{in } \Omega,\\
\partial_n w_n = 0,\quad \quad &\text{on }(0,T)\times \partial \Omega,\\
\int_\Omega w_n(x)\, dx=0, &\\
\|w_n\|_{L^2(\Omega)}=1, 
\end{array}\right.
$$ 
where $(\lambda_n)_{n\geq 2}$ denotes the sequence of associated positive eigenvalues.

In this setting, let us expand every admissible perturbation $h$ as
$$
h=\sum_{n=1}^{+\infty}\alpha_n w_n\quad \text{with }\alpha_n=\langle h,w_n\rangle_{L^2(\Omega)}\text{ for all }n\in \NN^*.
$$
Using that the solution $\bar p$ to \eqref{eq:psimple} does not depend on the space variable, it is standard to expand $\dot{p}$ as
\begin{equation*}
\dot{p}(t,x)=\sum_{n=1}^{\infty} \alpha_n v_n(t) w_n(x)\mbox{ for each }t\in(0,T),\quad x\in\Omega,
\end{equation*}
where $v_n$ solves the o.d.e. $v'_n(t) = \left( -\lambda_n + f'(\bar{p}(t)) \right) v_n(t)$ and $v_n(0)=(G^{-1})'(c)$ so that
$$
v_n(t)=(G^{-1})'(c) \exp \left( -\lambda_n t + \int_0^t f'(\bar{p}(s)) ds \right).
$$
According to Proposition~\ref{prop:adjoint}, one thus computes 
\begin{eqnarray*}
d^2J_T(\bar{u})(h,h) &=&\int_{\Omega}  \dot{p}(T,x)^2\, dx 
- \int_{\Omega}  \int_0^T   \dot{p}(t,x)^2 f''(\bar{p}(t))\bar{q}(t) \, dt dx \\
&& +\int_\Omega \bar q(0)(G^{-1})''(c)h(x)^2\, dx
\\
&=&\int_{\Omega} \left(\sum_{n=1}^{+\infty} \alpha_n v_n(T) w_n(x)\right)^2\, dx 
- \int_{\Omega}  \int_0^T   \left(\sum_{n=1}^{+\infty} \alpha_n v_n(t) w_n(x)\right)^2 f''(\bar{p}(t))\bar{q}(t) \, dt dx\\
&& +\int_\Omega \bar q(0)(G^{-1})''(c)\left(\sum_{n=1}^{+\infty}\alpha_nw_n(x)\right)^2\, dx.
\end{eqnarray*}
Using that  $\{w_n\}_{n\in \NN^*}$ is orthonormal in $L^2(\Omega)$, we finally get the following diagonalized expression of the second order derivative
$$
d^2J_T(\bar{u})(h,h)=\sum_{n=1}^{+\infty} \delta_n(T)\alpha_n^2\quad \text{with }\delta_n(T)= v_n(T)^2 - \int_0^T  f''(\bar{p}(t)) \bar{q}(t)   v_n(t)^2 dt +\bar q(0)(G^{-1})''(c).
$$
The signature of $d^2J_T(\bar{u})(h,h)$ seen as an infinite quadratic form with respect to $h$ is then directly given by the sign of the coefficients $\delta_n$. Notice that for all $n\in \NN^*$, one has
\begin{eqnarray*}
\delta_n(T) &=& v_n(T)^2+(\bar p(T)-1)e^{\int_0^Tf'(\bar p)}\left[(G^{-1})''(c)-\int_0^T f''(\bar p(t))e^{-\int_0^t f'(\bar p)}v_n(t)^2\, dt\right].
\end{eqnarray*}
Let us first assume that $C\leq |\Omega|G(\theta)$, meaning that $(G^{-1})(c)\leq \theta$. In that case, since $\bar p$ solves \eqref{p_bar}, and that the three roots of $f$ are 0, $\theta$ and 1, one infers that $\bar p$ is a decreasing function and that $f(\bar p)$ remains negative all along $(0,T)$. Furthermore, on $(0,\theta)$, the function $f''$ is positive. Finally, one computes
$(G^{-1})''(c)=(G^{-1})'(c)g'(G^{-1}(c))$ which is negative since so is $g'$ on $(0,1)$. Combining all these facts, we infer that
$$
(G^{-1})''(c)-\int_0^T f''(\bar p(t))e^{-\int_0^t f'(\bar p)}v_n(t)^2\, dt<0
$$
and since $\bar p(T)<1$, \blue{it} follows that 
$$
\delta_n(T)>(\bar p(T)-1)(G^{-1})''(c)e^{\int_0^Tf'(\bar p)} >0
$$ 
for every $n\in \NN^*$. \red{Therefore, by setting
$$
K_T=(\bar p(T)-1)(G^{-1})''(c)e^{\int_0^Tf'(\bar p)}>0,
$$
we get that for every admissible perturbation $h$, one has
$$
d^2J_T(\bar{u})(h,h) \geq K_T\sum_{n=1}^{+\infty}\alpha_n^2=K_T\Vert h \Vert_{L^2(\Omega)}^2.
$$
}
Expanding $J_T$ at the second order at $\bar u$, it is then standard that this condition implies that $\bar u$ is a local minimizer for the functional $J_T$.

\subsubsection{Constant solutions are not always global minimizers}\label{constant:nonglobal}

We recall the following well-known result (see \cite{OuyangShi}).
\begin{lemma}
  Let $\alpha\in (\theta_c,1)$. There exists a unique solution, denoted $v_\alpha$, of the Cauchy problem
  $$
  - \frac{d-1}{r} v'(r)  - v''(r) = f(v(r)), \text{ on } (0, +\infty), \quad v(0) = \alpha, \quad v'(0)= 0.
  $$
  Moreover, $\blue{ r\in(0,+\infty)} \mapsto v_\alpha(r)$ is decreasing, and there exists $R_\alpha >0$ such that $v_\alpha(R_\alpha) = 0$.
\end{lemma}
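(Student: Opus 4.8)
The plan is to read the equation as a singular radial Cauchy problem and to follow the trajectory in the phase plane $(v,v')$ by means of the natural energy functional.

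\textbf{Step 1 (Local existence and uniqueness).} I would first put the equation in divergence form $(r^{d-1}v')'=-r^{d-1}f(v)$ and integrate twice against the initial data $v(0)=\alpha$, $v'(0)=0$, which yields the fixed-point formulation
\[
v(r)=\alpha-\int_0^r t^{1-d}\int_0^t s^{d-1}f(v(s))\,ds\,dt .
\]
Since $f$ is $C^2$, hence locally Lipschitz, and the kernel $t^{1-d}\int_0^t s^{d-1}\,ds=t/d$ is integrable near the origin, a contraction-mapping argument on a small interval $[0,r_0]$ produces a unique $C^2$ solution near $0$; passing to the limit $r\to 0^+$ in the equation gives $v''(0)=-f(\alpha)/d$. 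Away from the origin the coefficient $(d-1)/r$ is smooth and $f$ is Lipschitz, so standard ODE theory extends the solution uniquely as long as it remains bounded, giving global uniqueness on the maximal interval of existence.

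\textbf{Step 2 (Energy and global bound).} Multiplying the equation by $v'$ gives the Lyapunov identity $\frac{d}{dr}E(r)=-\frac{d-1}{r}\,v'(r)^2\le 0$, where $E(r):=\tfrac12 v'(r)^2+F(v(r))$. Thus $E$ is non-increasing with $E(0)=F(\alpha)>0$, the strict positivity being exactly the translation of the hypothesis $\alpha>\theta_c$ via \eqref{eq:defthetac}. This yields the a priori bound $v'(r)^2\le 2\bigl(F(\alpha)-F(v(r))\bigr)$, which prevents blow-up and guarantees existence on the whole range relevant to us (up to the first zero of $v$). For the monotonicity, set $w(r):=r^{d-1}v'(r)=-\int_0^r s^{d-1}f(v(s))\,ds$, whose sign equals that of $v'$. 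Since $f(\alpha)>0$ we have $v''(0)<0$, so $v'<0$ for small $r$; and as long as $v>\theta$ the integrand $s^{d-1}f(v(s))$ stays positive, forcing $w<0$ and hence $v'<0$. A short linearisation remark — the equation linearised at the equilibrium $\theta$ is oscillatory because $f'(\theta)>0$ — rules out a monotone asymptote to $\theta$, so $v$ strictly decreases and crosses $\theta$ at some finite radius.

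\textbf{Step 3 (Reaching zero — the main obstacle).} The delicate point, which I expect to be the heart of the proof, is to show that $v$ keeps decreasing all the way down to $0$, i.e.\ that $w$ does not return to $0$ (equivalently, $v$ has no interior local minimum at a positive value) before $v$ reaches $0$. Note that the energy estimate alone is \emph{not} sufficient: at a hypothetical first critical point $r_1$ one necessarily has $v(r_1)=\beta\le\theta$ (since $v'$ rising to $0$ forces $v''(r_1)\ge 0$, i.e.\ $f(\beta)\le 0$), and the strict decay of $E$ only gives $F(\beta)<F(\alpha)$, which is automatically satisfied because $F(\beta)<0<F(\alpha)$. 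Hence the argument must be genuinely global, balancing the positive impulse $\int s^{d-1}f(v)\,ds$ accumulated while $v\in(\theta,\alpha)$ against the negative one accumulated while $v\in(0,\theta)$; this is precisely where the threshold $\theta_c$ (the energy level $F=0$) separates initial data whose trajectory reaches $0$ from those developing a positive turning point. I would carry this out through the shooting/continuity-in-$\alpha$ analysis for ground states of bistable nonlinearities, invoking \cite{OuyangShi}. Once $v'<0$ is established up to the first zero $R_\alpha$, strict monotonicity on $(0,R_\alpha)$ and $v_\alpha(R_\alpha)=0$ follow at once, and uniqueness has already been secured in Step~1.
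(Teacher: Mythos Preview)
The paper does not actually prove this lemma: it is quoted verbatim as a ``well-known result'' with a bare citation to \cite{OuyangShi}, and no argument is given. So there is no proof in the paper to compare against.

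Your sketch is consistent with how this classical statement is established. Steps~1 and~2 are correct and standard: the integral reformulation $(r^{d-1}v')'=-r^{d-1}f(v)$ gives local well-posedness at the singular point, and the Lyapunov functional $E(r)=\tfrac12 v'^2+F(v)$ is non-increasing with $E(0)=F(\alpha)>0$ precisely because $\alpha>\theta_c$. You are also right that in Step~3 the energy inequality alone does not exclude an interior turning point in the range $v\in(0,\theta)$, and that this is where the real work lies. Note however that in dimension $d=1$ the friction term vanishes, $E$ is conserved, and then $F(v(r_1))=F(\alpha)$ at any critical point $r_1$ immediately rules out a turning point for $v<\alpha$ (since $F$ attains its maximum on $[0,\alpha]$ uniquely at $\alpha$); the explicit formula $R_\alpha=\int_0^\alpha dv/\sqrt{2(F(\alpha)-F(v))}$ that the paper records in the subsequent remark is exactly this computation. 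For $d\ge 2$ the damped case does require the global shooting analysis you allude to, and deferring to \cite{OuyangShi} at that point is precisely what the paper itself does.

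One minor point: the lemma, as stated, asserts monotonicity on the whole half-line $(0,+\infty)$, whereas your Step~3 only argues up to the first zero $R_\alpha$. For the application in the paper (the compactly supported subsolution $w_\alpha=\max\{0,v_\alpha\}$) only the interval $(0,R_\alpha)$ matters, so this discrepancy is harmless.
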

In other words, this lemma states the existence of radially symmetric steady-states to the stationary equation associated to \eqref{eq:psimple}.
We then deduce the existence of stationary subsolutions for System~\eqref{eq:psimple} that are positive and compactly supported, provided the domain contains a large enough ball, in other words that the inradius of $\Omega$ be large enough.
\begin{corollary}\label{subsol1}
Let us assume that a ball of radius $R_\alpha$  is compactly included in $\Omega$ for some $\alpha\in (\theta_c,1)$, in other words that there exists $O_\alpha\in \Omega$ such that $\overline{B(O_\alpha,R_\alpha)}\subset \Omega$. Then, $w_\alpha := \max \{0,v_\alpha(\|x-O_\alpha\|)\}$ is a subsolution of \eqref{eq:psimple} if, and only if $G(w_\alpha) \leq u_0$.
\end{corollary}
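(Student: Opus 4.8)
The plan is to verify directly the three conditions defining a (time-independent) subsolution of the parabolic problem \eqref{eq:psimple}: the interior differential inequality $-\Delta w_\alpha \leq f(w_\alpha)$, the boundary inequality $\partial_\nu w_\alpha \leq 0$ on $\partial\Omega$, and the initial comparison $w_\alpha \leq p(0^+,\cdot) = G^{-1}(u_0)$, where $p$ is the solution of \eqref{eq:psimple}. First I would observe that, writing $r = \|x - O_\alpha\|$, the radial Laplacian in $\RR^d$ reads $\Delta v(r) = v''(r) + \frac{d-1}{r} v'(r)$, so the Cauchy problem of the preceding Lemma states exactly that $-\Delta v_\alpha = f(v_\alpha)$ in the classical (radial) sense on $(0,R_\alpha)$. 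Since $w_\alpha$ is independent of time, $\partial_t w_\alpha = 0$, and it suffices to treat $w_\alpha$ as a stationary candidate.

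For the interior inequality I would split $\Omega$ into $\Omega_+ := B(O_\alpha, R_\alpha)$, where $w_\alpha = v_\alpha(r) > 0$, and its complement, where $w_\alpha = 0$. On $\Omega_+$ one has $-\Delta w_\alpha = f(w_\alpha)$ by the ODE, while on the complement $-\Delta w_\alpha = 0 = f(0) = f(w_\alpha)$ since $f(0)=0$. The only delicate point is the interface $\Sigma := \{r = R_\alpha\}$, across which $w_\alpha$ is continuous but has a downward kink. To handle it, I would test $\Delta w_\alpha$ against an arbitrary $\varphi \in C_c^\infty(\Omega)$ with $\varphi \geq 0$ and integrate by parts on $\Omega_+$, using that $v_\alpha = 0$ on $\Sigma$; Green's identity together with $\Delta v_\alpha = -f(v_\alpha)$ produces
$$
\langle \Delta w_\alpha + f(w_\alpha), \varphi\rangle = -v_\alpha'(R_\alpha) \int_\Sigma \varphi\, d\sigma.
$$
Since $v_\alpha$ is decreasing and, by uniqueness in the Cauchy--Lipschitz theorem, $v_\alpha'(R_\alpha) \neq 0$ (otherwise $v_\alpha \equiv 0$, contradicting $v_\alpha(0) = \alpha > 0$), we have $v_\alpha'(R_\alpha) < 0$, so the right-hand side is nonnegative. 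Hence $-\Delta w_\alpha \leq f(w_\alpha)$ in the distributional sense, which is exactly the required interior subsolution inequality: the singular part of $-\Delta w_\alpha$ carried by $\Sigma$ has the favorable sign.

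The boundary condition is immediate: since $\overline{B(O_\alpha,R_\alpha)} \subset \Omega$ is compactly included, $w_\alpha$ vanishes identically in a neighborhood of $\partial\Omega$, so $\partial_\nu w_\alpha = 0 \leq 0$. Consequently $w_\alpha$ meets the interior and boundary requirements unconditionally, and the subsolution property for \eqref{eq:psimple} reduces to the initial comparison $w_\alpha \leq G^{-1}(u_0)$ on $\Omega$. Because $w_\alpha$ takes values in $[0,\alpha] \subset [0,1)$ and $G$ is continuous and strictly increasing on $[0,1)$ (hence so is $G^{-1}$), this comparison is equivalent to $G(w_\alpha) \leq u_0$. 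This yields both implications: if $w_\alpha$ is a subsolution then in particular $w_\alpha \leq G^{-1}(u_0)$, i.e. $G(w_\alpha)\leq u_0$; conversely, if $G(w_\alpha)\leq u_0$ then all three defining conditions hold and $w_\alpha$ is a subsolution. The main obstacle is the rigorous treatment of the interface term, namely checking that the kink of $w_\alpha$ on $\Sigma$ contributes a nonpositive singular measure to $-\Delta w_\alpha$; everything else is a direct verification.
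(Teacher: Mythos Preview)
Your proof is correct. The paper does not provide an explicit proof of this corollary---it is presented as a direct consequence of the preceding lemma---so your argument supplies the details that the paper leaves implicit. The verification you give (equality with $f(w_\alpha)$ away from the interface $\Sigma$, the distributional computation showing that the kink at $\Sigma$ contributes $-v_\alpha'(R_\alpha)\,\delta_\Sigma \geq 0$ to $\Delta w_\alpha + f(w_\alpha)$, the trivial Neumann condition by compact inclusion, and the equivalence of the initial comparison with $G(w_\alpha)\leq u_0$ via monotonicity of $G$) is exactly the standard argument for such ``truncated bump'' subsolutions, and there is nothing to correct.
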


Using that $w_\alpha$ is a subsolution, we deduce the following comparison result.
\begin{corollary}\label{subsol2}
  For any $\alpha \in (\theta_c,1)$ such that $\Omega$ contains strictly a ball of radius $R_\alpha$, that is there exists $O_\alpha\in \Omega$ such that $\overline{B(O_\alpha,R_\alpha)}\subset \Omega$, and $G(w_\alpha)\leq u_0$, the solution of \eqref{eq:psimple} verifies $p(t,\cdot)\geq w_\alpha$ on $\Omega$ for any $t\geq 0$.
\end{corollary}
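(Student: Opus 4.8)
The plan is to obtain the inequality from a parabolic comparison between the stationary subsolution $w_\alpha$ and the genuine solution $p$ of \eqref{eq:psimple}. The one delicate point is that $w_\alpha$ is merely Lipschitz, with a gradient jump across the sphere $\|x-O_\alpha\|=R_\alpha$, so a naive pointwise maximum principle is not directly available; I would instead run an energy (Stampacchia-type) argument that only needs $H^1$ regularity.

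First I would reformulate the hypotheses as an ordering of the data. Since $G$ is a strictly increasing bijection from $[0,1)$ onto $[0,+\infty)$, so is $G^{-1}$, and the standing assumption $G(w_\alpha)\leq u_0$ is equivalent to $w_\alpha\leq G^{-1}(u_0)=p(0^+,\cdot)$ on $\Omega$. Moreover, since $\overline{B(O_\alpha,R_\alpha)}\subset\Omega$, the function $w_\alpha$ vanishes in a neighborhood of $\partial\Omega$, so its homogeneous Neumann condition is trivially compatible with that of $p$. By Corollary~\ref{subsol1}, $w_\alpha$ is a stationary subsolution, which I would record in weak form: for every nonnegative $\phi\in H^1(\Omega)$,
$$
\int_\Omega \nabla w_\alpha\cdot\nabla\phi\, dx \leq \int_\Omega f(w_\alpha)\,\phi\, dx,
$$
the inequality (rather than equality) encoding precisely the favorable sign of the singular part of $-\Delta w_\alpha$ at the interface, where the radial derivative jumps upward from $v_\alpha'(R_\alpha)\leq 0$ to $0$.

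Then I would subtract this inequality from the weak formulation of \eqref{eq:psimple} for $p$, testing both against $z_+:=(w_\alpha-p)_+$, which is an admissible nonnegative test function in $L^2(0,T;H^1(\Omega))$ (the pairing $\langle\partial_t p,z_+\rangle$ being meaningful since $\partial_t p\in L^2(0,T;H^1(\Omega)')$, as $f(p)$ is bounded). Writing $z=w_\alpha-p$ and using $\partial_t w_\alpha=0$ together with $\int_\Omega\nabla z\cdot\nabla z_+\, dx=\int_\Omega|\nabla z_+|^2\, dx$, one reaches
$$
\frac12\frac{d}{dt}\int_\Omega z_+^2\, dx + \int_\Omega |\nabla z_+|^2\, dx \leq \int_\Omega\big(f(w_\alpha)-f(p)\big)\,z_+\, dx.
$$
Since the maximum principle recalled in Section~\ref{sec:OCP} gives $0\leq p<1$, and $0\leq w_\alpha<1$ by construction, $f$ is Lipschitz on the relevant range, say with constant $L$, whence the right-hand side is bounded by $L\int_\Omega z_+^2\, dx$. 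Dropping the nonnegative gradient term yields $\frac{d}{dt}\int_\Omega z_+^2\, dx\leq 2L\int_\Omega z_+^2\, dx$; as $\int_\Omega z_+(0^+,\cdot)^2\, dx=\int_\Omega(w_\alpha-p(0^+,\cdot))_+^2\, dx=0$ by the ordering of initial data, Gronwall's lemma forces $\int_\Omega z_+^2\, dx\equiv 0$. Hence $w_\alpha\leq p(t,\cdot)$ on $\Omega$ for every admissible $t$, and since $w_\alpha$ is time-independent this is exactly the claimed inequality $p(t,\cdot)\geq w_\alpha$.

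The main obstacle is, as announced, the lack of smoothness of $w_\alpha$ across $\|x-O_\alpha\|=R_\alpha$. The energy formulation circumvents it by staying within $H^1$ and exploiting that the gradient jump contributes with the correct sign to the weak subsolution inequality; alternatively, one may simply invoke a comparison theorem for weak sub- and supersolutions, such as the one in \cite{conway1977comparison} already used elsewhere in the paper. The remaining verifications (admissibility of $z_+$ as a test function, Lipschitz bound on $f$ over $[0,1]$) are routine.
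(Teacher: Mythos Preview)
Your argument is correct. The paper does not give a separate proof of this corollary: it merely states that the comparison follows from $w_\alpha$ being a subsolution (Corollary~\ref{subsol1}), implicitly appealing to the parabolic comparison principle already invoked elsewhere (e.g.\ \cite{conway1977comparison}). Your Stampacchia-type energy argument is a genuine elaboration of that implicit step: you translate the hypothesis $G(w_\alpha)\leq u_0$ into the initial ordering $w_\alpha\leq p(0^+,\cdot)$, write the subsolution property in weak form so as to absorb the gradient jump across $\{\|x-O_\alpha\|=R_\alpha\}$ with the correct sign, test against $(w_\alpha-p)_+$, and close with Gronwall. This buys self-containment and a clean treatment of the only nontrivial point---the Lipschitz-only regularity of $w_\alpha$---whereas the paper's route is shorter but relies on the reader accepting that the cited comparison theorem applies to weak subsolutions of this type. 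Both approaches are valid; yours is the more explicit of the two, and you correctly note at the end that the black-box invocation of \cite{conway1977comparison} is an acceptable shortcut.
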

Let us introduce
$$
C_\alpha := \int_\Omega G(w_\alpha(x))\,dx.
$$

Notice that the family of subsolutions $(w_\alpha)_\alpha$ have already been used to provide a sufficient condition on the release function to initiate propagation of infected mosquitoes \cite{Zubelli}.
\begin{remark}
  It is worth mentioning that in the one dimensional case, the expressions for $R_\alpha$ and $C_\alpha$ are completely explicit:
  $$
  R_\alpha = \int_0^\alpha \frac{dw}{\sqrt{2(F(\alpha)-F(w))}}, \qquad
  C_\alpha = \int_0^\alpha \frac{2 G(w)\,dw}{\sqrt{2(F(\alpha)-F(w))}}.
  $$
\end{remark}

We are now in position to prove Proposition \ref{prop:cstpasoptim1} that we rewrite more precisely using the notations  above.
\begin{proposition}\label{prop:cstpasoptim}
Let us assume \eqref{eq:assumptionCLM}.
Assume moreover the existence of $\alpha\in (\theta_c,G^{-1}(M)]$ such that $\Omega$ contains strictly a ball of radius $R_\alpha$, and $C_\alpha \leq C$.
  Then the constant solution $\bar{u}:=\frac{C}{|\Omega|}$ is not a global minimizer of the optimization problem \ref{prob:reduced} whenever $T$ is large enough.
\end{proposition}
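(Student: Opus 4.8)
The plan is to exhibit a single admissible competitor $u_0$, \emph{independent of $T$}, whose cost tends to $0$ as $T\to+\infty$, while the cost of the constant candidate $\bar u$ stays bounded away from $0$; for $T$ large this gives $J_T(u_0)<J_T(\bar u)$ and disqualifies $\bar u$ as a global minimizer of~\eqref{prob:reduced}.

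First I would build the competitor from the compactly supported subsolution furnished by Corollary~\ref{subsol1}. Fix $\alpha\in(\theta_c,G^{-1}(M)]$ as in the hypotheses, so that $\overline{B(O_\alpha,R_\alpha)}\subset\Omega$ and $C_\alpha\leq C$, and set $u_0:=G(w_\alpha)$ with $w_\alpha=\max\{0,v_\alpha(\|\cdot-O_\alpha\|)\}$. Admissibility is immediate: since $0\leq w_\alpha\leq\alpha\leq G^{-1}(M)$ and $G$ is increasing, one gets $0\leq u_0\leq M$, while $\int_\Omega u_0=\int_\Omega G(w_\alpha)=C_\alpha\leq C$, so $u_0\in\mathcal{V}_{C,M}$. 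By construction $p(0^+,\cdot)=G^{-1}(u_0)=w_\alpha$, and Corollary~\ref{subsol2} yields $p(t,\cdot)\geq w_\alpha$ on $\Omega$ for every $t\geq0$.

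The heart of the argument is to show that $p(t,\cdot)\to 1$ uniformly on $\Omega$ as $t\to+\infty$. Since $w_\alpha$ is a stationary subsolution, the map $t\mapsto p(t,\cdot)$ is nondecreasing (apply the parabolic maximum principle to $\partial_t p$, which solves the linearized equation and is nonnegative at $t=0^+$) and bounded above by $1$; by compactness of the trajectory it converges monotonically to a steady state $p_\infty$ of $-D\Delta p_\infty=f(p_\infty)$ with homogeneous Neumann data, satisfying $w_\alpha\leq p_\infty\leq 1$. The decisive point is that $p_\infty\equiv 1$: this is precisely the invasion phenomenon for the bistable equation, which occurs because $\alpha>\theta_c$ (equivalently $F(\alpha)>0$, $\theta_c$ being the balanced value $F(\theta_c)=0$) and because $\Omega$ contains the ball $B(O_\alpha,R_\alpha)$ on which $w_\alpha$ exceeds the ignition threshold; see the sharp-threshold results in \cite{Zubelli,OuyangShi}. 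Consequently $J_T(u_0)=\tfrac12\int_\Omega(1-p(T,\cdot))^2\,dx\to0$ as $T\to+\infty$ by dominated convergence. I expect this step---ruling out that the monotone limit is a nonconstant steady state or a state staying below $1$---to be the main obstacle, and it is exactly where the assumptions $\alpha>\theta_c$ and the large-inradius hypothesis are genuinely used.

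It then remains to track the constant candidate. From \eqref{eq:assumptionCLM} one has $\bar u=C/|\Omega|<G(\theta)$, hence $\bar p(0^+)=G^{-1}(\bar u)<\theta$. The corresponding solution $\bar p$ is space-homogeneous and solves $\bar p'=f(\bar p)$; since $f<0$ on $(0,\theta)$, $\bar p$ decreases monotonically to $0$, so $J_T(\bar u)=\tfrac{|\Omega|}{2}(1-\bar p(T))^2\to\tfrac{|\Omega|}{2}>0$. Combining the two asymptotics, there exists $T_0>0$ such that $J_T(u_0)<\tfrac{|\Omega|}{4}<J_T(\bar u)$ for all $T\geq T_0$, which proves that $\bar u$ is not a global minimizer for $T$ large enough.
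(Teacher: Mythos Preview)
Your overall strategy---compare $J_T$ at the competitor $u_0=G(w_\alpha)$ with $J_T$ at $\bar u$---is exactly the paper's. The construction of $u_0$, its admissibility check, and the analysis of $\bar p$ are all correct and match the paper's argument.

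The difference is that you work much harder than necessary on the competitor side. You try to prove full invasion, i.e.\ $p(t,\cdot)\to 1$ uniformly and hence $J_T(u_0)\to 0$, and you correctly flag this as ``the main obstacle'': on a bounded domain with Neumann data, ruling out that the monotone limit is a nonconstant steady state is genuinely delicate (the references you cite treat the whole space or Dirichlet problems, not this situation), and the statement can fail on non-convex domains. The paper simply avoids this issue. From Corollary~\ref{subsol2} one has $p(t,\cdot)\geq w_\alpha$ for all $t\geq 0$, hence
\[
J_T(u_0)\;\leq\;\tfrac12\int_\Omega (1-w_\alpha(x))^2\,dx\;<\;\tfrac{|\Omega|}{2},
\]
the strict inequality holding because $w_\alpha>0$ on a set of positive measure. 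Since $J_T(\bar u)=\tfrac{|\Omega|}{2}(1-\bar p(T))^2\to \tfrac{|\Omega|}{2}$, this fixed upper bound is eventually beaten. So the crude lower bound $p\geq w_\alpha$ already suffices, no invasion argument is needed, and your ``main obstacle'' disappears entirely.
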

\begin{proof}
From assumption \eqref{eq:assumptionCLM}, we have $G^{-1}(\bar{u})<\theta$, hence we have already seen in Section~\ref{sec:optCstsol} that the solution, denoted $\bar{p}$, of \eqref{eq:psimple} with initial data $G^{-1}(\bar{u})$ is constant in space and decreasing with respect to time. More precisely, it solves the ODE
$$
\bar{p}' = f(\bar{p}), \qquad \bar{p}(0) = G^{-1}(\bar{u}) = G^{-1}\Big(\frac{C}{|\Omega|}\Big).
$$
Hence, when $t\to +\infty$, $\bar{p}(t)$ decays to $0$.

For any $\alpha\in (\theta_c,G^{-1}(M)]$ satisfying the assumptions above, the subsolution $w_\alpha$ defined in Corollary \ref{subsol1} is such that $G(w_\alpha)\in\calV_{C,M}$. From Corollary \ref{subsol2}, if we take $u_0\in \calV_{C,M}$ such that $G^{-1}(u_0)\geq w_\alpha$, then for all $t\geq 0$, the corresponding solution to \eqref{eq:psimple} verifies $p(t,\cdot)\geq w_\alpha$.
Hence $J_T(u_0) \leq \frac 12 \int_\Omega (1-w_\alpha(x))^2\,dx$.

  Moreover, since $\bar{p}(T)\to 0$ as $T\to + \infty$, we have that for $T$ large enough
  $$
  \int_\Omega (1-\bar{p}(T))^2 \,dx  = (1-\bar{p}(T))^2 |\Omega| > \int_\Omega (1-w_\alpha(x))^2\,dx.
  $$
  Hence, $\bar{u}$ is not a global minimum of $J_T$ at time $T$ since $J_T(u_0) < J_T(\bar{u}) = \frac 12 \int_\Omega (1-\bar{p}(T))^2\,dx$.
\end{proof}

\begin{remark}
  If $G^{-1}(M)>\theta_c$, if the inradius of $\Omega$ is large enough and \blue{if} $C$ is large enough, it is always possible to find $\alpha$ satisfying the assumptions of Proposition~\ref{prop:cstpasoptim}.
  For instance, it suffices to choose $\alpha=G^{-1}(M)$ and to take $C\geq C_{G^{-1}(M)}$ and the inradius of $\Omega$ large enough so that \eqref{eq:assumptionCLM} holds and a ball of radius $R_{G^{-1}(M)}$ be included in $\Omega$.
\end{remark}


\section{Numerical experiments}\label{sec:num}

In this section, we provide some numerical approximations of solutions for the optimal control problem \eqref{prob:reduced}. 

The parameter values are given in Tables \ref{tab:value} and \ref{tab:value2}. We will assume that $\Omega$ is an interval $(0,L)$, i.e. $d=1$.
From these tables, we deduce that $s_f = 0.1$, $\delta=\frac{4}{3}$, and thus $\theta = \frac{s_f+\delta -1}{\delta s_h}=\frac{13}{36}$. System \eqref{eq:psimple} will be discretized with an explicit Euler scheme in time and a standard finite difference approximation of the Laplacian. In all simulations, the number of steps in space and time will be fixed to $20$ and $200$ respectively (in order to satisfy the CFL condition). The solution of the optimal control problem will be obtained by testing and combining two approaches: 
\begin{itemize}
\item a Uzawa type algorithm, based on the gradient computation of Prop.~\ref{prop:adjoint}. It consists in alternating at each iteration a step of minimization of the Lagrangian associated with the problem with respect to the primal variable ($u_0$) and a step of maximization with respect to the Lagrange multiplier associated with the integral constraint. The minimization step is performed with a projected gradient type method, where $L^\infty$ constraints on $u_0$  are taken into account by means of a projection operator.  
\item the opensource optimization routine \textsc{GEKKO} (see \cite{beal2018gekko}) solving the optimization problem using the \textsc{IPOPT} (Interior Point OPTimizer) library, a software package for large-scale nonlinear problems by an interior-point filter line-search algorithm (see \cite{IPOPT}). This algorithm has been initialized with the previous control obtained by using the aforementioned Uzawa type algorithm.
\end{itemize}



\begin{table}[ht]
\centering
\begin{tabular}{|c|l|c|c|}
\hline
\textit{Parameter}&\textit{Name}&\textit{Value}\\\hline
 $F_{un}$& Normalized fecundity rate for uninfected mosquitoes &$1$ \\\hline
 $F_{in}$& Normalized fecundity rate for infected mosquitoes &$0.9$\\  \hline
     $d_{un}$&Death rate for uninfected mosquitoes&$0.27$ \\\hline
   $d_{in}$&Death rate for infected mosquitoes&$0.36$  \\ \hline
   $K$&Caring capacity& $0.06$ \\ \hline
    $s_h$&Cytoplasmic incompatibility&$0.9$  \\ \hline
  
\end{tabular}
\caption{Values of the parameters used in the simulations  (see \cite[sec. 2]{Zubelli}
)}
\label{tab:value}
\end{table}

\begin{table}[ht]
\centering
\begin{tabular}{|c|l|c|c|}
\hline
\textit{Parameter}&\textit{Name}&\textit{Value}\\\hline
 $T$&Time of experiment &$40$ \\\hline
 $D$& Diffusion coefficient &$1$\\  \hline
     $L=|\Omega|$&Size of the spatial domain&$30$ \\\hline
   \end{tabular}
\caption{Values of the parameters $T$, $D$ and $|\Omega|$ used in the simulations}
\label{tab:value2}
\end{table}

\blue{\begin{remark}\label{rk:assumpHfgver}
According to Remark~\ref{rmq:Hfg}, the assumption~\eqref{assump:fg} is satisfied for the particular choices of functions $f$ and $g$ given by \eqref{eq:fg} under \eqref{cond1} and \eqref{cond2} which hold true for the values of the parameters in Table~\ref{tab:value}. Furthermore, it is easy to check numerically that the assumption~\eqref{eq:defthetac} is satisfied for the values of the parameters taken from the case at hand (see e.g. \cite{Zubelli}). Indeed, for the parameter values, $f<0$ on $(0,\theta)$, which implies that $F<0$ on $(0,\theta)$, and moreover $F(1)>0$ (see Figure~\ref{zeros_fonction_f}).
\end{remark}
}

%

%
%
Let us distinguish between two cases:
\begin{center}
\textit{Case $C/|\Omega|>M$.}
\end{center}
In Figure \ref{fig:1}, the local minimizers of Problem~\eqref{prob:reduced}  for $C=1.2$ and $M=0.02$ (left) (resp. $M=0.03$ (right)) obtained by using the aformentioned Uzawa and Gekko algorithms are reported.
We observe the extinction (resp. the invasion) of the population.
One recovers the theoretical result stated in item (i) of Theorem~\ref{prop1.1}, in other words that the constant function equal to $M$ solves Problem~\eqref{prob:reduced} whenever $C\geq M|\Omega|$ (see Table \ref{tab:value3}).
In this situation, the space dependency has no impact on the time dynamics, i.e. the dynamics is the same as if there is no diffusion. Then, since it is a bistable dynamics, when $M < G(\theta)$ there is extinction of the population, whereas there is invasion when $M > G(\theta)$.

\begin{center}
\textit{Case $C/|\Omega|<M$.}
\end{center}
This situation is illustrated in Figure~\ref{fig:2} and \ref{fig:2bis} with Gekko algorithm and Figure \ref{fig:3} with Uzawa algorithm for $C\in\{0.5,0.8\}$ and $M\in\{0.04,0.4\}$. 
\blue{
We can see in Figure~\ref{fig:2} that when the number total of mosquitoes released is too low (when $C=0.5$), then the infected population decreased until the extinction of this population. On the contrary, if the number total of mosquitoes released is higher (when $C=0.8$), then we obtain an invasion of the infected mosquitoes.
}
The simulation with the Uzawa algorithm in Figure \ref{fig:3} recovers the fact that $C/L$ is a local minimizer for Problem~\eqref{prob:reduced}.
Indeed this algorithm seems to converges always to this constant solution. 
Nevertheless, it is not a global minimum since Gekko provides a better control as it is illustrated thanks to the values of $J_T(\bar{u})$ reported in Table \ref{tab:value3}. Moreover, we see on Figure \ref{fig:2} that invasion of the infected population seems to occurs whereas the infected population seems to go to extinction in Figure \ref{fig:3}.
This is also in concordance with the result stated in Proposition \ref{prop:cstpasoptim}.

\begin{table}[ht]
\centering
\begin{tabular}{|c|c|c|c|c|c|}
\hline
Case&\textit{Parameters}&\begin{tabular}{c}$J_T(\overline{u})$ with \\ Gekko\end{tabular} &\begin{tabular}{c}$J_T(\overline{u})$ with\\ Uzawa\end{tabular} &$J_T(M)$ &$J_T(C/L)$\\\hline\hline
 \multirow{2}{*}{$C/|\Omega|>M$}&$M=0.02$, $C=1.2$&14.7& 14.7&14.7&\\
&$M=0.03$, $C=1.2$&3.61e-2 &3.61e-2&3.61e-2&\\  \hline
 \multirow{4}{*}{$C/|\Omega|<M$}&$M=0.04$, $C=0.5$& 14.0&14.8&&14.8\\ 
&$M=0.04$, $C=0.8$&2.30 &12.7&&12.7\\  
&$M=0.08$, $C=0.5$&13.8&14.8&&14.8\\ 
&$M=0.08$, $C=0.8$&2.25&12.7&&12.7\\  \hline
   \end{tabular}
\caption{Values of local optima computed thanks to Gekko and Uzawa algorithms and theoretical local optima}
\label{tab:value3}
\end{table}

\begin{figure}[H] 
\begin{center}
\begin{tikzpicture}[thick,scale=0.65, every node/.style={scale=1.0}] \begin{axis}[xlabel=$x$,ylabel={$p$ for $M=0.02$},
xmin=0,xmax=30,
ymin=0,
ymin=0,ymax=1,
legend style={at={(0.8,1.1)},
anchor=north west,
 legend columns=2}]
\addplot[color=red]coordinates { 
(0.0,0.28050558887)
(1.4285714285714286,0.28050558887)
(2.857142857142857,0.28050558613)
(4.285714285714286,0.28050558872)
(5.714285714285714,0.28050558789)
(7.142857142857143,0.28050558838)
(8.571428571428571,0.28050558833)
(10.0,0.28050558799)
(11.428571428571429,0.28050558782)
(12.857142857142858,0.28050558795)
(14.285714285714286,0.28050558811)
(15.714285714285715,0.28050558811)
(17.142857142857142,0.28050558795)
(18.571428571428573,0.28050558782)
(20.0,0.28050558799)
(21.42857142857143,0.28050558833)
(22.857142857142858,0.28050558838)
(24.285714285714285,0.28050558789)
(25.714285714285715,0.28050558872)
(27.142857142857142,0.28050558613)
(28.571428571428573,0.28050558887)
(30.0,0.28050558887)
 };

\addplot[color=black!60!green]coordinates { 
(0.0,0.18940993834)
(1.4285714285714286,0.18940993834)
(2.857142857142857,0.18940993843)
(4.285714285714286,0.18940993861)
(5.714285714285714,0.18940993883)
(7.142857142857143,0.18940993905)
(8.571428571428571,0.18940993926)
(10.0,0.18940993942)
(11.428571428571429,0.18940993953)
(12.857142857142858,0.1894099396)
(14.285714285714286,0.18940993963)
(15.714285714285715,0.18940993963)
(17.142857142857142,0.1894099396)
(18.571428571428573,0.18940993953)
(20.0,0.18940993942)
(21.42857142857143,0.18940993926)
(22.857142857142858,0.18940993905)
(24.285714285714285,0.18940993883)
(25.714285714285715,0.18940993861)
(27.142857142857142,0.18940993843)
(28.571428571428573,0.18940993834)
(30.0,0.18940993834)
 };

\addplot[color=blue]coordinates { 
(0.0,0.091325658598)
(1.4285714285714286,0.091325658598)
(2.857142857142857,0.091325658682)
(4.285714285714286,0.09132565884)
(5.714285714285714,0.09132565905)
(7.142857142857143,0.091325659286)
(8.571428571428571,0.091325659524)
(10.0,0.09132565974)
(11.428571428571429,0.091325659916)
(12.857142857142858,0.091325660039)
(14.285714285714286,0.091325660102)
(15.714285714285715,0.091325660102)
(17.142857142857142,0.091325660039)
(18.571428571428573,0.091325659916)
(20.0,0.09132565974)
(21.42857142857143,0.091325659524)
(22.857142857142858,0.091325659286)
(24.285714285714285,0.09132565905)
(25.714285714285715,0.09132565884)
(27.142857142857142,0.091325658682)
(28.571428571428573,0.091325658598)
(30.0,0.091325658598)
 };

\addplot[color=brown]coordinates { 
(0.0,0.03344261122)
(1.4285714285714286,0.03344261122)
(2.857142857142857,0.03344261125)
(4.285714285714286,0.033442611307)
(5.714285714285714,0.033442611385)
(7.142857142857143,0.033442611475)
(8.571428571428571,0.033442611568)
(10.0,0.033442611656)
(11.428571428571429,0.033442611729)
(12.857142857142858,0.033442611782)
(14.285714285714286,0.033442611809)
(15.714285714285715,0.033442611809)
(17.142857142857142,0.033442611782)
(18.571428571428573,0.033442611729)
(20.0,0.033442611656)
(21.42857142857143,0.033442611568)
(22.857142857142858,0.033442611475)
(24.285714285714285,0.033442611385)
(25.714285714285715,0.033442611307)
(27.142857142857142,0.03344261125)
(28.571428571428573,0.03344261122)
(30.0,0.03344261122)
 };

\addplot[color=orange]coordinates { 
(0.0,0.010739647519)
(1.4285714285714286,0.010739647519)
(2.857142857142857,0.010739647526)
(4.285714285714286,0.010739647539)
(5.714285714285714,0.010739647557)
(7.142857142857143,0.010739647578)
(8.571428571428571,0.0107396476)
(10.0,0.010739647621)
(11.428571428571429,0.010739647639)
(12.857142857142858,0.010739647652)
(14.285714285714286,0.010739647659)
(15.714285714285715,0.010739647659)
(17.142857142857142,0.010739647652)
(18.571428571428573,0.010739647639)
(20.0,0.010739647621)
(21.42857142857143,0.0107396476)
(22.857142857142858,0.010739647578)
(24.285714285714285,0.010739647557)
(25.714285714285715,0.010739647539)
(27.142857142857142,0.010739647526)
(28.571428571428573,0.010739647519)
(30.0,0.010739647519)
 };

\addplot[color=black,dashed]coordinates { 
(0.0,0.28050547550804134)
(30.0,0.28050547550804134)
 };


\legend{$t=0$,$t=10$,$t=20$,$t=30$,$t=40$,$G^{-1}(M)$}
\end{axis} 
\end{tikzpicture}
\begin{tikzpicture}[thick,scale=0.65, every node/.style={scale=1.0}] \begin{axis}[xlabel=$x$,ylabel={$p$ for $M=0.03$},
xmin=0,xmax=30,
ymin=0,
ymin=0,ymax=1,
legend style={at={(0.8,1.1)},
anchor=north west,
 legend columns=2}]
\addplot[color=red]coordinates { 
(0.0,0.38223668027)
(1.4285714285714286,0.38223668027)
(2.857142857142857,0.38223668027)
(4.285714285714286,0.38223668027)
(5.714285714285714,0.38223668027)
(7.142857142857143,0.38223668027)
(8.571428571428571,0.38223668027)
(10.0,0.38223668027)
(11.428571428571429,0.38223668027)
(12.857142857142858,0.38223668027)
(14.285714285714286,0.38223668027)
(15.714285714285715,0.38223668027)
(17.142857142857142,0.38223668027)
(18.571428571428573,0.38223668027)
(20.0,0.38223668027)
(21.42857142857143,0.38223668027)
(22.857142857142858,0.38223668027)
(24.285714285714285,0.38223668027)
(25.714285714285715,0.38223668027)
(27.142857142857142,0.38223668027)
(28.571428571428573,0.38223668027)
(30.0,0.38223668027)
 };

\addplot[color=black!60!green]coordinates { 
(0.0,0.40953575179)
(1.4285714285714286,0.40953575179)
(2.857142857142857,0.40953575179)
(4.285714285714286,0.40953575179)
(5.714285714285714,0.40953575179)
(7.142857142857143,0.40953575179)
(8.571428571428571,0.40953575179)
(10.0,0.40953575179)
(11.428571428571429,0.40953575179)
(12.857142857142858,0.40953575179)
(14.285714285714286,0.40953575179)
(15.714285714285715,0.40953575179)
(17.142857142857142,0.40953575179)
(18.571428571428573,0.40953575179)
(20.0,0.40953575179)
(21.42857142857143,0.40953575179)
(22.857142857142858,0.40953575179)
(24.285714285714285,0.40953575179)
(25.714285714285715,0.40953575179)
(27.142857142857142,0.40953575179)
(28.571428571428573,0.40953575179)
(30.0,0.40953575179)
 };

\addplot[color=blue]coordinates { 
(0.0,0.49202511428)
(1.4285714285714286,0.49202511428)
(2.857142857142857,0.49202511428)
(4.285714285714286,0.49202511428)
(5.714285714285714,0.49202511428)
(7.142857142857143,0.49202511428)
(8.571428571428571,0.49202511428)
(10.0,0.49202511428)
(11.428571428571429,0.49202511427)
(12.857142857142858,0.49202511427)
(14.285714285714286,0.49202511427)
(15.714285714285715,0.49202511427)
(17.142857142857142,0.49202511427)
(18.571428571428573,0.49202511427)
(20.0,0.49202511428)
(21.42857142857143,0.49202511428)
(22.857142857142858,0.49202511428)
(24.285714285714285,0.49202511428)
(25.714285714285715,0.49202511428)
(27.142857142857142,0.49202511428)
(28.571428571428573,0.49202511428)
(30.0,0.49202511428)
 };

\addplot[color=brown]coordinates { 
(0.0,0.72501560418)
(1.4285714285714286,0.72501560418)
(2.857142857142857,0.72501560418)
(4.285714285714286,0.72501560417)
(5.714285714285714,0.72501560417)
(7.142857142857143,0.72501560417)
(8.571428571428571,0.72501560417)
(10.0,0.72501560416)
(11.428571428571429,0.72501560416)
(12.857142857142858,0.72501560416)
(14.285714285714286,0.72501560416)
(15.714285714285715,0.72501560416)
(17.142857142857142,0.72501560416)
(18.571428571428573,0.72501560416)
(20.0,0.72501560416)
(21.42857142857143,0.72501560417)
(22.857142857142858,0.72501560417)
(24.285714285714285,0.72501560417)
(25.714285714285715,0.72501560417)
(27.142857142857142,0.72501560418)
(28.571428571428573,0.72501560418)
(30.0,0.72501560418)
 };

\addplot[color=orange]coordinates { 
(0.0,0.95095286548)
(1.4285714285714286,0.95095286548)
(2.857142857142857,0.95095286548)
(4.285714285714286,0.95095286548)
(5.714285714285714,0.95095286547)
(7.142857142857143,0.95095286547)
(8.571428571428571,0.95095286546)
(10.0,0.95095286546)
(11.428571428571429,0.95095286545)
(12.857142857142858,0.95095286545)
(14.285714285714286,0.95095286545)
(15.714285714285715,0.95095286545)
(17.142857142857142,0.95095286545)
(18.571428571428573,0.95095286545)
(20.0,0.95095286546)
(21.42857142857143,0.95095286546)
(22.857142857142858,0.95095286547)
(24.285714285714285,0.95095286547)
(25.714285714285715,0.95095286548)
(27.142857142857142,0.95095286548)
(28.571428571428573,0.95095286548)
(30.0,0.95095286548)
 };

\addplot[color=black,dashed]coordinates { 
(0.0,0.3822365898168491)
(30.0,0.3822365898168491)
 };


\legend{$t=0$,$t=10$,$t=20$,$t=30$,$t=40$,$G^{-1}(M)$}
\end{axis} 
\end{tikzpicture}
\end{center}
\caption{
Case $C/|\Omega|>M$ : Optimal solution $p$ to Problem~\eqref{prob:reduced} at time $t\in\{0,10,20,30,40\}$ for $C=1.2$ and $M\in\{0.02,0.03\}$ thanks to Gekko and Uzawa algorithms
\label{fig:1}}
\end{figure}
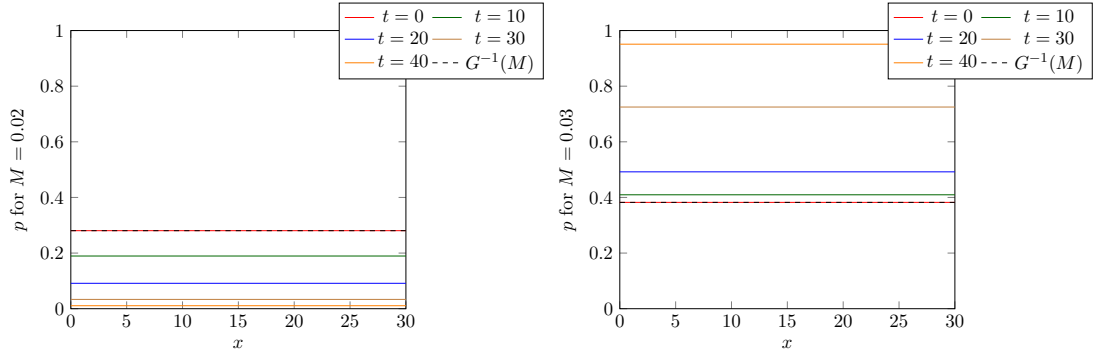

\begin{figure}[H] 
\begin{center}
\begin{tikzpicture}[thick,scale=0.65, every node/.style={scale=1.0}] \begin{axis}[xlabel=$x$,ylabel={$p$ for $M=0.04$ and $C=0.5$},
xmin=0,xmax=30,
ymin=0,
ymin=0,ymax=1,
legend style={at={(0.8,1.1)},
anchor=north west,
 legend columns=2}]
\addplot[color=red]coordinates { 
(0.0,0.0)
(1.4285714285714286,0.0)
(2.857142857142857,0.0)
(4.285714285714286,0.0)
(5.714285714285714,0.0)
(7.142857142857143,0.0)
(8.571428571428571,0.26128699681)
(10.0,0.43806603582)
(11.428571428571429,0.46300768059)
(12.857142857142858,0.4630076806)
(14.285714285714286,0.4630076806)
(15.714285714285715,0.4630076806)
(17.142857142857142,0.4630076806)
(18.571428571428573,0.46300768059)
(20.0,0.43806603582)
(21.42857142857143,0.26128699681)
(22.857142857142858,0.0)
(24.285714285714285,0.0)
(25.714285714285715,0.0)
(27.142857142857142,0.0)
(28.571428571428573,0.0)
(30.0,0.0)
 };

\addplot[color=black!60!green]coordinates { 
(0.0,0.019277135141)
(1.4285714285714286,0.019277135141)
(2.857142857142857,0.028041078272)
(4.285714285714286,0.047177975701)
(5.714285714285714,0.079530886148)
(7.142857142857143,0.12820965128)
(8.571428571428571,0.19462549177)
(10.0,0.27537536324)
(11.428571428571429,0.35944946778)
(12.857142857142858,0.42960777117)
(14.285714285714286,0.46931804935)
(15.714285714285715,0.46931804935)
(17.142857142857142,0.42960777117)
(18.571428571428573,0.35944946778)
(20.0,0.27537536324)
(21.42857142857143,0.19462549177)
(22.857142857142858,0.12820965128)
(24.285714285714285,0.079530886148)
(25.714285714285715,0.047177975701)
(27.142857142857142,0.028041078272)
(28.571428571428573,0.019277135141)
(30.0,0.019277135141)
 };

\addplot[color=blue]coordinates { 
(0.0,0.029018525411)
(1.4285714285714286,0.029018525411)
(2.857142857142857,0.035645105914)
(4.285714285714286,0.049717114787)
(5.714285714285714,0.072715004234)
(7.142857142857143,0.10632089706)
(8.571428571428571,0.15139634744)
(10.0,0.20622559135)
(11.428571428571429,0.26452049086)
(12.857142857142858,0.31499913551)
(14.285714285714286,0.34463175514)
(15.714285714285715,0.34463175514)
(17.142857142857142,0.31499913551)
(18.571428571428573,0.26452049086)
(20.0,0.20622559135)
(21.42857142857143,0.15139634744)
(22.857142857142858,0.10632089706)
(24.285714285714285,0.072715004234)
(25.714285714285715,0.049717114787)
(27.142857142857142,0.035645105914)
(28.571428571428573,0.029018525411)
(30.0,0.029018525411)
 };

\addplot[color=brown]coordinates { 
(0.0,0.024730584358)
(1.4285714285714286,0.024730584358)
(2.857142857142857,0.028634027394)
(4.285714285714286,0.03666572238)
(5.714285714285714,0.049157104348)
(7.142857142857143,0.066287326384)
(8.571428571428571,0.087669544022)
(10.0,0.11184687056)
(11.428571428571429,0.13593188386)
(12.857142857142858,0.15579717626)
(14.285714285714286,0.16713592788)
(15.714285714285715,0.16713592788)
(17.142857142857142,0.15579717626)
(18.571428571428573,0.13593188386)
(20.0,0.11184687056)
(21.42857142857143,0.087669544022)
(22.857142857142858,0.066287326384)
(24.285714285714285,0.049157104348)
(25.714285714285715,0.03666572238)
(27.142857142857142,0.028634027394)
(28.571428571428573,0.024730584358)
(30.0,0.024730584358)
 };

\addplot[color=orange]coordinates { 
(0.0,0.014984781981)
(1.4285714285714286,0.014984781981)
(2.857142857142857,0.016449146356)
(4.285714285714286,0.019342197211)
(5.714285714285714,0.023565001137)
(7.142857142857143,0.028907744319)
(8.571428571428571,0.034998789787)
(10.0,0.041276902999)
(11.428571428571429,0.047017557428)
(12.857142857142858,0.051433906285)
(14.285714285714286,0.05384245497)
(15.714285714285715,0.05384245497)
(17.142857142857142,0.051433906285)
(18.571428571428573,0.047017557428)
(20.0,0.041276902999)
(21.42857142857143,0.034998789787)
(22.857142857142858,0.028907744319)
(24.285714285714285,0.023565001137)
(25.714285714285715,0.019342197211)
(27.142857142857142,0.016449146356)
(28.571428571428573,0.014984781981)
(30.0,0.014984781981)
 };

\addplot[color=black,dashed]coordinates { 
(0.0,0.46300760876425756)
(30.0,0.46300760876425756)
 };

\addplot[color=black,dotted,very thick]coordinates { 
(0.0,0.24117424241095473)
(30.0,0.24117424241095473)
 };

\legend{$t=0$,$t=10$,$t=20$,$t=30$,$t=40$,$G^{-1}(M)$,$G^{-1}(C/L)$}
\end{axis} 
\end{tikzpicture}
\begin{tikzpicture}[thick,scale=0.65, every node/.style={scale=1.0}] \begin{axis}[xlabel=$x$,ylabel={$p$ for $M=0.04$ and $C=0.8$},
xmin=0,xmax=30,
ymin=0,
ymin=0,ymax=1,
legend style={at={(0.8,1.1)},
anchor=north west,
 legend columns=2}]
\addplot[color=red]coordinates { 
(0.0,0.31903432558)
(1.4285714285714286,0.31903432558)
(2.857142857142857,0.32444921351)
(4.285714285714286,0.33432582588)
(5.714285714285714,0.3474273746)
(7.142857142857143,0.36298752576)
(8.571428571428571,0.38098686269)
(10.0,0.40180303222)
(11.428571428571429,0.42548778873)
(12.857142857142858,0.45116774742)
(14.285714285714286,0.46300768056)
(15.714285714285715,0.46300768059)
(17.142857142857142,0.4630076806)
(18.571428571428573,0.4630076806)
(20.0,0.4630076806)
(21.42857142857143,0.46300768058)
(22.857142857142858,0.43165753334)
(24.285714285714285,0.30079427848)
(25.714285714285715,0.0)
(27.142857142857142,0.0)
(28.571428571428573,0.0)
(30.0,0.0)
 };

\addplot[color=black!60!green]coordinates { 
(0.0,0.30055979187)
(1.4285714285714286,0.30055979187)
(2.857142857142857,0.30962808162)
(4.285714285714286,0.32768759983)
(5.714285714285714,0.3543996318)
(7.142857142857143,0.38881928958)
(8.571428571428571,0.42899678361)
(10.0,0.47174823442)
(11.428571428571429,0.51282485388)
(12.857142857142858,0.54744700726)
(14.285714285714286,0.57081158019)
(15.714285714285715,0.57820750868)
(17.142857142857142,0.56494814248)
(18.571428571428573,0.52702675967)
(20.0,0.46350140413)
(21.42857142857143,0.38000009674)
(22.857142857142858,0.28940805602)
(24.285714285714285,0.20652436656)
(25.714285714285715,0.14146612925)
(27.142857142857142,0.09815434507)
(28.571428571428573,0.076818436451)
(30.0,0.076818436451)
 };

\addplot[color=blue]coordinates { 
(0.0,0.30632271178)
(1.4285714285714286,0.30632271178)
(2.857142857142857,0.32533793744)
(4.285714285714286,0.36327801112)
(5.714285714285714,0.41888556952)
(7.142857142857143,0.48804673762)
(8.571428571428571,0.56288639965)
(10.0,0.63330057579)
(11.428571428571429,0.69025551382)
(12.857142857142858,0.72777939913)
(14.285714285714286,0.74227856251)
(15.714285714285715,0.7307120112)
(17.142857142857142,0.68968268173)
(18.571428571428573,0.61702316358)
(20.0,0.51654508234)
(21.42857142857143,0.40242247376)
(22.857142857142858,0.29476818198)
(24.285714285714285,0.20811541524)
(25.714285714285715,0.1468060848)
(27.142857142857142,0.10890012295)
(28.571428571428573,0.091025170949)
(30.0,0.091025170949)
 };

\addplot[color=brown]coordinates { 
(0.0,0.42626598394)
(1.4285714285714286,0.42626598394)
(2.857142857142857,0.46355023047)
(4.285714285714286,0.53215239762)
(5.714285714285714,0.6190654375)
(7.142857142857143,0.70708301942)
(8.571428571428571,0.78222265715)
(10.0,0.83815577959)
(11.428571428571429,0.87442048592)
(12.857142857142858,0.89254669689)
(14.285714285714286,0.89320674935)
(15.714285714285715,0.87467282258)
(17.142857142857142,0.83222904556)
(18.571428571428573,0.75906788799)
(20.0,0.65073773775)
(21.42857142857143,0.51455569289)
(22.857142857142858,0.37491716103)
(24.285714285714285,0.25848550817)
(25.714285714285715,0.17648576937)
(27.142857142857142,0.12689302666)
(28.571428571428573,0.10399043108)
(30.0,0.10399043108)
 };

\addplot[color=orange]coordinates { 
(0.0,0.80714192856)
(1.4285714285714286,0.80714192856)
(2.857142857142857,0.82383460197)
(4.285714285714286,0.85176532905)
(5.714285714285714,0.8833528504)
(7.142857142857143,0.91244152687)
(8.571428571428571,0.93570755609)
(10.0,0.95212549411)
(11.428571428571429,0.96176184074)
(12.857142857142858,0.96469341893)
(14.285714285714286,0.96019576315)
(15.714285714285715,0.94605278494)
(17.142857142857142,0.91785568603)
(18.571428571428573,0.86847178323)
(20.0,0.78874785922)
(21.42857142857143,0.67220632409)
(22.857142857142858,0.52618288689)
(24.285714285714285,0.37892272376)
(25.714285714285715,0.26123709618)
(27.142857142857142,0.18533305359)
(28.571428571428573,0.14934561316)
(30.0,0.14934561316)
 };

\addplot[color=black,dashed]coordinates { 
(0.0,0.46300760876425756)
(30.0,0.46300760876425756)
 };

\addplot[color=black,dotted,very thick]coordinates { 
(0.0,0.3508840630147442)
(30.0,0.3508840630147442)
 };

\legend{$t=0$,$t=10$,$t=20$,$t=30$,$t=40$,$G^{-1}(M)$,$G^{-1}(C/L)$}
\end{axis} 
\end{tikzpicture}
\begin{tikzpicture}[thick,scale=0.65, every node/.style={scale=1.0}] \begin{axis}[xlabel=$x$,ylabel={$p$ for $M=0.08$ and $C=0.5$},
xmin=0,xmax=30,
ymin=0,
ymin=0,ymax=1,
legend style={at={(0.8,1.1)},
anchor=north west,
 legend columns=2}]
\addplot[color=red]coordinates { 
(0.0,0.0)
(1.4285714285714286,0.0)
(2.857142857142857,0.0)
(4.285714285714286,0.0)
(5.714285714285714,0.0)
(7.142857142857143,0.0)
(8.571428571428571,0.27554008026)
(10.0,0.43399043773)
(11.428571428571429,0.50187365235)
(12.857142857142858,0.53138640062)
(14.285714285714286,0.53904154754)
(15.714285714285715,0.5287227113)
(17.142857142857142,0.49524020135)
(18.571428571428573,0.41944327249)
(20.0,0.23870515043)
(21.42857142857143,0.0)
(22.857142857142858,0.0)
(24.285714285714285,0.0)
(25.714285714285715,0.0)
(27.142857142857142,0.0)
(28.571428571428573,0.0)
(30.0,0.0)
 };

\addplot[color=black!60!green]coordinates { 
(0.0,0.020921362456)
(1.4285714285714286,0.020921362456)
(2.857142857142857,0.030843511932)
(4.285714285714286,0.052815398283)
(5.714285714285714,0.090700301156)
(7.142857142857143,0.1488571109)
(8.571428571428571,0.22914285441)
(10.0,0.32571871112)
(11.428571428571429,0.42088761914)
(12.857142857142858,0.48970837836)
(14.285714285714286,0.51266466311)
(15.714285714285715,0.48348414952)
(17.142857142857142,0.41023892629)
(18.571428571428573,0.31370401904)
(20.0,0.21847996952)
(21.42857142857143,0.14071809786)
(22.857142857142858,0.084887943928)
(24.285714285714285,0.048368343177)
(25.714285714285715,0.02631372901)
(27.142857142857142,0.0142452425)
(28.571428571428573,0.0090069236388)
(30.0,0.0090069236388)
 };

\addplot[color=blue]coordinates { 
(0.0,0.034990781298)
(1.4285714285714286,0.034990781298)
(2.857142857142857,0.04354935109)
(4.285714285714286,0.061807831852)
(5.714285714285714,0.091762799392)
(7.142857142857143,0.13543715176)
(8.571428571428571,0.19309639034)
(10.0,0.26046232615)
(11.428571428571429,0.32644783933)
(12.857142857142858,0.37484228235)
(14.285714285714286,0.39122670714)
(15.714285714285715,0.37035722255)
(17.142857142857142,0.31880927467)
(18.571428571428573,0.25162845074)
(20.0,0.18464172685)
(21.42857142857143,0.12800453604)
(22.857142857142858,0.085141558475)
(24.285714285714285,0.055205458779)
(25.714285714285715,0.035737921136)
(27.142857142857142,0.024266690187)
(28.571428571428573,0.0189908221)
(30.0,0.0189908221)
 };

\addplot[color=brown]coordinates { 
(0.0,0.032557240783)
(1.4285714285714286,0.032557240783)
(2.857142857142857,0.038030531827)
(4.285714285714286,0.049309622074)
(5.714285714285714,0.066853906158)
(7.142857142857143,0.090796256693)
(8.571428571428571,0.12022677654)
(10.0,0.15238194083)
(11.428571428571429,0.18226581832)
(12.857142857142858,0.20347228521)
(14.285714285714286,0.21046447827)
(15.714285714285715,0.20121282584)
(17.142857142857142,0.1782722065)
(18.571428571428573,0.14741430769)
(20.0,0.11492107969)
(21.42857142857143,0.08544846799)
(22.857142857142858,0.061379038196)
(24.285714285714285,0.043283685255)
(25.714285714285715,0.030726213969)
(27.142857142857142,0.02293726844)
(28.571428571428573,0.019234233718)
(30.0,0.019234233718)
 };

\addplot[color=orange]coordinates { 
(0.0,0.021014510911)
(1.4285714285714286,0.021014510911)
(2.857142857142857,0.023181989629)
(4.285714285714286,0.027456319357)
(5.714285714285714,0.033665871053)
(7.142857142857143,0.041442176507)
(8.571428571428571,0.050132782905)
(10.0,0.058767553783)
(11.428571428571429,0.06613947663)
(12.857142857142858,0.071029513189)
(14.285714285714286,0.07252808862)
(15.714285714285715,0.070321808522)
(17.142857142857142,0.06479730615)
(18.571428571428573,0.056902881962)
(20.0,0.047844366719)
(21.42857142857143,0.038767437974)
(22.857142857142858,0.030549086128)
(24.285714285714285,0.023734278077)
(25.714285714285715,0.01858282186)
(27.142857142857142,0.015167211953)
(28.571428571428573,0.013472797054)
(30.0,0.013472797054)
 };

\addplot[color=black,dashed]coordinates { 
(0.0,0.6568582319083345)
(30.0,0.6568582319083345)
 };

\addplot[color=black,dotted,very thick]coordinates { 
(0.0,0.24117424241095473)
(30.0,0.24117424241095473)
 };

\legend{$t=0$,$t=10$,$t=20$,$t=30$,$t=40$,$G^{-1}(M)$,$G^{-1}(C/L)$}
\end{axis} 
\end{tikzpicture}
\begin{tikzpicture}[thick,scale=0.65, every node/.style={scale=1.0}] \begin{axis}[xlabel=$x$,ylabel={$p$ for $M=0.08$ and $C=0.8$},
xmin=0,xmax=30,
ymin=0,
ymin=0,ymax=1,
legend style={at={(0.8,1.1)},
anchor=north west,
 legend columns=2}]
\addplot[color=red]coordinates { 
(0.0,0.45746038749)
(1.4285714285714286,0.45746038749)
(2.857142857142857,0.46592362107)
(4.285714285714286,0.4773598257)
(5.714285714285714,0.48414030616)
(7.142857142857143,0.47845859188)
(8.571428571428571,0.45069199535)
(10.0,0.38352440783)
(11.428571428571429,0.23487502445)
(12.857142857142858,0.0)
(14.285714285714286,0.0)
(15.714285714285715,0.0)
(17.142857142857142,0.0)
(18.571428571428573,0.0)
(20.0,0.28529979195)
(21.42857142857143,0.40939531152)
(22.857142857142858,0.46505776645)
(24.285714285714285,0.48738223674)
(25.714285714285715,0.49203937169)
(27.142857142857142,0.48869643702)
(28.571428571428573,0.48472559147)
(30.0,0.48472559147)
 };

\addplot[color=black!60!green]coordinates { 
(0.0,0.64025360946)
(1.4285714285714286,0.64025360946)
(2.857142857142857,0.62545241994)
(4.285714285714286,0.59273826882)
(5.714285714285714,0.53810631834)
(7.142857142857143,0.46098295497)
(8.571428571428571,0.36857353328)
(10.0,0.2751764812)
(11.428571428571429,0.19511128782)
(12.857142857142858,0.13680192789)
(14.285714285714286,0.10294931298)
(15.714285714285715,0.093677020191)
(17.142857142857142,0.10905091859)
(18.571428571428573,0.1498474302)
(20.0,0.21639077091)
(21.42857142857143,0.30546080245)
(22.857142857142858,0.40677353104)
(24.285714285714285,0.50374974005)
(25.714285714285715,0.58123880419)
(27.142857142857142,0.63231984006)
(28.571428571428573,0.65699686969)
(30.0,0.65699686969)
 };

\addplot[color=blue]coordinates { 
(0.0,0.82886810562)
(1.4285714285714286,0.82886810562)
(2.857142857142857,0.80348977312)
(4.285714285714286,0.74970156581)
(5.714285714285714,0.6642565499)
(7.142857142857143,0.55016553459)
(8.571428571428571,0.42314753299)
(10.0,0.3065719791)
(11.428571428571429,0.21656238735)
(12.857142857142858,0.15694496123)
(14.285714285714286,0.1250755855)
(15.714285714285715,0.11787735742)
(17.142857142857142,0.13471249477)
(18.571428571428573,0.17794381694)
(20.0,0.25150601439)
(21.42857142857143,0.35638837803)
(22.857142857142858,0.4830678369)
(24.285714285714285,0.60885622119)
(25.714285714285715,0.71041796213)
(27.142857142857142,0.77726226364)
(28.571428571428573,0.80948818692)
(30.0,0.80948818692)
 };

\addplot[color=brown]coordinates { 
(0.0,0.92817721821)
(1.4285714285714286,0.92817721821)
(2.857142857142857,0.91090730321)
(4.285714285714286,0.87250799126)
(5.714285714285714,0.80603019506)
(7.142857142857143,0.70485018507)
(8.571428571428571,0.57147626689)
(10.0,0.42711918512)
(11.428571428571429,0.30268300129)
(12.857142857142858,0.21622175971)
(14.285714285714286,0.16977001669)
(15.714285714285715,0.16002942632)
(17.142857142857142,0.18590952345)
(18.571428571428573,0.25001212661)
(20.0,0.35445526793)
(21.42857142857143,0.49083876549)
(22.857142857142858,0.63315080545)
(24.285714285714285,0.75197485393)
(25.714285714285715,0.83482705063)
(27.142857142857142,0.88425592001)
(28.571428571428573,0.906832478)
(30.0,0.906832478)
 };

\addplot[color=orange]coordinates { 
(0.0,0.9675285306)
(1.4285714285714286,0.9675285306)
(2.857142857142857,0.95819055168)
(4.285714285714286,0.93700365066)
(5.714285714285714,0.8987927053)
(7.142857142857143,0.83607045258)
(8.571428571428571,0.74172708748)
(10.0,0.61680129952)
(11.428571428571429,0.48063057411)
(12.857142857142858,0.36589526587)
(14.285714285714286,0.2966295555)
(15.714285714285715,0.28122775295)
(17.142857142857142,0.3206541841)
(18.571428571428573,0.4115603404)
(20.0,0.53926842083)
(21.42857142857143,0.67364744164)
(22.857142857142858,0.78568149554)
(24.285714285714285,0.86481973123)
(25.714285714285715,0.91472213836)
(27.142857142857142,0.94291741755)
(28.571428571428573,0.95546078758)
(30.0,0.95546078758)
 };

\addplot[color=black,dashed]coordinates { 
(0.0,0.6568582319083345)
(30.0,0.6568582319083345)
 };

\addplot[color=black,dotted,very thick]coordinates { 
(0.0,0.3508840630147442)
(30.0,0.3508840630147442)
 };

\legend{$t=0$,$t=10$,$t=20$,$t=30$,$t=40$,$G^{-1}(M)$,$G^{-1}(C/L)$}
\end{axis} 
\end{tikzpicture}
\end{center}
\caption{Case {$C/|\Omega|<M$}: 
Optimal solution $p$ to Problem~\eqref{prob:reduced} at time $t\in\{0,10,20,30,40\}$ for  $C\in\{0.5,0.8\}$ and $M\in\{0.04,0.08\}$ thanks to Gekko algorithm.
\label{fig:2}}
\end{figure}
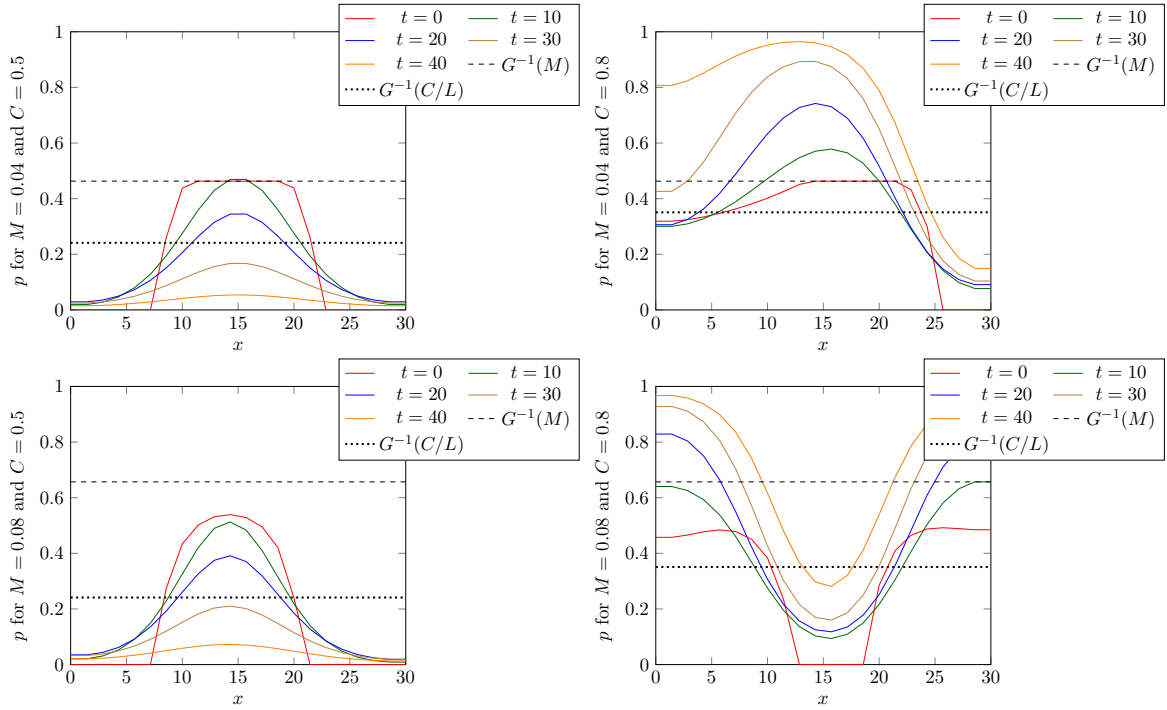

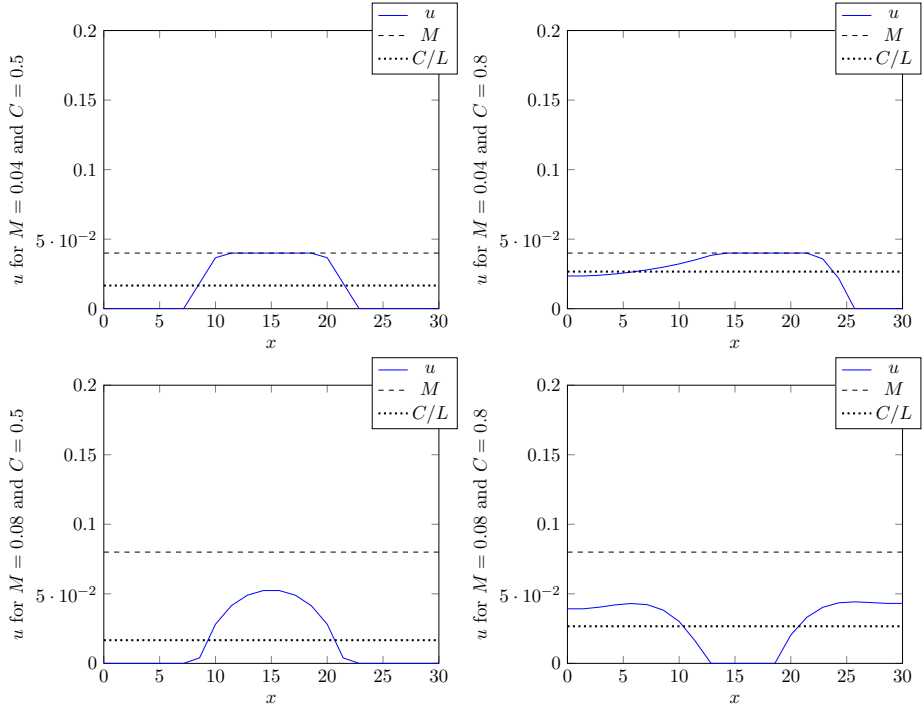
\begin{figure}[H] 
\begin{center}
\begin{tikzpicture}[thick,scale=0.65, every node/.style={scale=1.0}] \begin{axis}[xlabel=$x$,ylabel={$u$ for $M=0.04$ and $C=0.5$},
xmin=0,xmax=30,
ymin=0,
ymin=0,ymax=0.2,
legend style={at={(0.8,1.1)},
anchor=north west,
 legend columns=1}]
\addplot[color=blue]coordinates { 
(0.0,0.0)
(1.4285714285714286,0.0)
(2.857142857142857,0.0)
(4.285714285714286,0.0)
(5.714285714285714,0.0)
(7.142857142857143,0.0)
(8.571428571428571,0.018340804155)
(10.0,0.036659172343)
(11.428571428571429,0.04)
(12.857142857142858,0.04)
(14.285714285714286,0.04)
(15.714285714285715,0.04)
(17.142857142857142,0.04)
(18.571428571428573,0.04)
(20.0,0.036659172343)
(21.42857142857143,0.018340804155)
(22.857142857142858,0.0)
(24.285714285714285,0.0)
(25.714285714285715,0.0)
(27.142857142857142,0.0)
(28.571428571428573,0.0)
(30.0,0.0)
 };

\addplot[color=black,dashed]coordinates { 
(0.0,0.04)
(30.0,0.04)
 };

\addplot[color=black,dotted,very thick]coordinates { 
(0.0,0.01666666)
(30.0,0.01666666)
 };


\legend{$u$,$M$,$C/L$}
\end{axis} 
\end{tikzpicture}
\begin{tikzpicture}[thick,scale=0.65, every node/.style={scale=1.0}] \begin{axis}[xlabel=$x$,ylabel={$u$ for $M=0.04$ and $C=0.8$},
xmin=0,xmax=30,
ymin=0,
ymin=0,ymax=0.2,
legend style={at={(0.8,1.1)},
anchor=north west,
 legend columns=1}]
\addplot[color=blue]coordinates { 
(0.0,0.023523955248)
(1.4285714285714286,0.023523955248)
(2.857142857142857,0.024042547508)
(4.285714285714286,0.025004573324)
(5.714285714285714,0.026314398304)
(7.142857142857143,0.027923196469)
(8.571428571428571,0.029862088612)
(10.0,0.032218803949)
(11.428571428571429,0.035066840741)
(12.857142857142858,0.038382217003)
(14.285714285714286,0.04)
(15.714285714285715,0.04)
(17.142857142857142,0.04)
(18.571428571428573,0.04)
(20.0,0.04)
(21.42857142857143,0.04)
(22.857142857142858,0.035840592074)
(24.285714285714285,0.021820735574)
(25.714285714285715,0.0)
(27.142857142857142,0.0)
(28.571428571428573,0.0)
(30.0,0.0)
 };

\addplot[color=black,dashed]coordinates { 
(0.0,0.04)
(30.0,0.04)
 };
\addplot[color=black,dotted,very thick]coordinates { 
(0.0,0.02666666)
(30.0,0.02666666)
 };

\legend{$u$,$M$,$C/L$}
\end{axis} 
\end{tikzpicture}
\begin{tikzpicture}[thick,scale=0.65, every node/.style={scale=1.0}] \begin{axis}[xlabel=$x$,ylabel={$u$ for $M=0.08$ and $C=0.5$},
xmin=0,xmax=30,
ymin=0,
ymin=0,ymax=0.2,
legend style={at={(0.8,1.1)},
anchor=north west,
anchor=north west,
 legend columns=1}]
\addplot[color=blue]coordinates { 
(0.0,0.0)
(1.4285714285714286,0.0)
(2.857142857142857,0.0)
(4.285714285714286,0.0)
(5.714285714285714,0.0)
(7.142857142857143,0.0)
(8.571428571428571,0.0039361747357)
(10.0,0.028157724809)
(11.428571428571429,0.041534394507)
(12.857142857142858,0.049024208392)
(14.285714285714286,0.052347498763)
(15.714285714285715,0.052347498956)
(17.142857142857142,0.049024209052)
(18.571428571428573,0.041534395877)
(20.0,0.02815772736)
(21.42857142857143,0.0039361805428)
(22.857142857142858,0.0)
(24.285714285714285,0.0)
(25.714285714285715,0.0)
(27.142857142857142,0.0)
(28.571428571428573,0.0)
(30.0,0.0)
 };

\addplot[color=black,dashed]coordinates { 
(0.0,0.08)
(30.0,0.08)
 };
 
  \addplot[color=black,dotted,very thick]coordinates { 
(0.0, 0.01666666)
(30.0, 0.01666666)
 };
\legend{$u$,$M$,$C/L$}
\end{axis} 
\end{tikzpicture}
\begin{tikzpicture}[thick,scale=0.65, every node/.style={scale=1.0}] \begin{axis}[xlabel=$x$,ylabel={$u$ for $M=0.08$ and $C=0.8$},
xmin=0,xmax=30,
ymin=0,
ymin=0,ymax=0.2,
legend style={at={(0.8,1.1)},
anchor=north west,
 legend columns=1}]

\addplot[color=blue]coordinates { 
(0.0,0.039234595006)
(1.4285714285714286,0.039234595006)
(2.857142857142857,0.040407761886)
(4.285714285714286,0.042044350308)
(5.714285714285714,0.043044012049)
(7.142857142857143,0.042204824566)
(8.571428571428571,0.038318443728)
(10.0,0.030142577702)
(11.428571428571429,0.016154553568)
(12.857142857142858,0.0)
(14.285714285714286,0.0)
(15.714285714285715,0.0)
(17.142857142857142,0.0)
(18.571428571428573,0.0)
(20.0,0.020423599242)
(21.42857142857143,0.033111467242)
(22.857142857142858,0.040286289778)
(24.285714285714285,0.043530026066)
(25.714285714285715,0.044237583161)
(27.142857142857142,0.04372856138)
(28.571428571428573,0.043131364289)
(30.0,0.043131364289)
 };

\addplot[color=black,dashed]coordinates { 
(0.0,0.08)
(30.0,0.08)
 };

 \addplot[color=black,dotted,very thick]coordinates { 
(0.0, 0.02666666)
(30.0, 0.02666666)
 };

\legend{$u$,$M$,$C/L$}
\end{axis} 
\end{tikzpicture}
\end{center}
\caption{Case {$C/|\Omega|<M$}: 
Optimal control $u$ associated to the cases considered in Fig.~\ref{fig:2}, in other words solution of Problem~\eqref{prob:reduced} for $C\in\{0.5,0.8\}$ and $M\in\{0.04,0.08\}$ thanks to Gekko algorithm
\label{fig:2bis}}
\end{figure}

\begin{figure}[H] 
\begin{center}
\begin{tikzpicture}[thick,scale=0.65, every node/.style={scale=1.0}] \begin{axis}[xlabel=$x$,ylabel={$p$ for $M=0.04$ and $C=0.5$},
xmin=0,xmax=30,
ymin=0,
ymin=0,ymax=1,
legend style={at={(0.8,1.1)},
anchor=north west,
 legend columns=2}]
\addplot[color=red]coordinates { 
(0.0,0.24117424241095473)
(1.4285714285714286,0.24117424241095473)
(2.857142857142857,0.24117424241095473)
(4.285714285714286,0.24117424241095473)
(5.714285714285714,0.24117424241095473)
(7.142857142857143,0.24117424241095473)
(8.571428571428571,0.24117424241095473)
(10.0,0.24117424241095473)
(11.428571428571429,0.24117424241095473)
(12.857142857142858,0.24117424241095473)
(14.285714285714286,0.24117424241095473)
(15.714285714285715,0.24117424241095473)
(17.142857142857142,0.24117424241095473)
(18.571428571428573,0.24117424241095473)
(20.0,0.24117424241095473)
(21.42857142857143,0.24117424241095473)
(22.857142857142858,0.24117424241095473)
(24.285714285714285,0.24117424241095473)
(25.714285714285715,0.24117424241095473)
(27.142857142857142,0.24117424241095473)
(28.571428571428573,0.24117424241095473)
(30.0,0.24117424241095473)
 };

\addplot[color=black!60!green]coordinates { 
(0.0,0.13925807137764365)
(1.4285714285714286,0.13925807137764365)
(2.857142857142857,0.13925807137764365)
(4.285714285714286,0.13925807137764365)
(5.714285714285714,0.13925807137764365)
(7.142857142857143,0.13925807137764365)
(8.571428571428571,0.13925807137764365)
(10.0,0.13925807137764365)
(11.428571428571429,0.13925807137764365)
(12.857142857142858,0.13925807137764365)
(14.285714285714286,0.13925807137764365)
(15.714285714285715,0.13925807137764365)
(17.142857142857142,0.13925807137764365)
(18.571428571428573,0.13925807137764365)
(20.0,0.13925807137764365)
(21.42857142857143,0.13925807137764365)
(22.857142857142858,0.13925807137764365)
(24.285714285714285,0.13925807137764365)
(25.714285714285715,0.13925807137764365)
(27.142857142857142,0.1392580713776436)
(28.571428571428573,0.13925807137764357)
(30.0,0.13925807137764357)
 };

\addplot[color=blue]coordinates { 
(0.0,0.05778121623300752)
(1.4285714285714286,0.05778121623300752)
(2.857142857142857,0.05778121623300752)
(4.285714285714286,0.05778121623300752)
(5.714285714285714,0.05778121623300752)
(7.142857142857143,0.05778121623300752)
(8.571428571428571,0.05778121623300752)
(10.0,0.05778121623300752)
(11.428571428571429,0.05778121623300752)
(12.857142857142858,0.05778121623300752)
(14.285714285714286,0.05778121623300752)
(15.714285714285715,0.05778121623300752)
(17.142857142857142,0.05778121623300752)
(18.571428571428573,0.05778121623300752)
(20.0,0.05778121623300752)
(21.42857142857143,0.05778121623300752)
(22.857142857142858,0.05778121623300752)
(24.285714285714285,0.05778121623300752)
(25.714285714285715,0.05778121623300752)
(27.142857142857142,0.057781216233007505)
(28.571428571428573,0.057781216233007485)
(30.0,0.05778121623300748)
 };

\addplot[color=brown]coordinates { 
(0.0,0.01956318240564086)
(1.4285714285714286,0.01956318240564086)
(2.857142857142857,0.019563182405640863)
(4.285714285714286,0.019563182405640863)
(5.714285714285714,0.019563182405640863)
(7.142857142857143,0.019563182405640863)
(8.571428571428571,0.019563182405640863)
(10.0,0.019563182405640863)
(11.428571428571429,0.019563182405640863)
(12.857142857142858,0.019563182405640863)
(14.285714285714286,0.019563182405640863)
(15.714285714285715,0.019563182405640863)
(17.142857142857142,0.019563182405640863)
(18.571428571428573,0.019563182405640863)
(20.0,0.01956318240564086)
(21.42857142857143,0.01956318240564086)
(22.857142857142858,0.01956318240564085)
(24.285714285714285,0.019563182405640846)
(25.714285714285715,0.01956318240564084)
(27.142857142857142,0.01956318240564084)
(28.571428571428573,0.01956318240564083)
(30.0,0.01956318240564083)
 };

\addplot[color=orange]coordinates { 
(0.0,0.006104234692947273)
(1.4285714285714286,0.006104234692947273)
(2.857142857142857,0.006104234692947273)
(4.285714285714286,0.006104234692947273)
(5.714285714285714,0.006104234692947273)
(7.142857142857143,0.006104234692947273)
(8.571428571428571,0.006104234692947273)
(10.0,0.006104234692947273)
(11.428571428571429,0.006104234692947273)
(12.857142857142858,0.006104234692947273)
(14.285714285714286,0.006104234692947273)
(15.714285714285715,0.0061042346929472725)
(17.142857142857142,0.0061042346929472725)
(18.571428571428573,0.0061042346929472725)
(20.0,0.0061042346929472725)
(21.42857142857143,0.006104234692947271)
(22.857142857142858,0.006104234692947268)
(24.285714285714285,0.006104234692947267)
(25.714285714285715,0.006104234692947264)
(27.142857142857142,0.006104234692947262)
(28.571428571428573,0.006104234692947261)
(30.0,0.006104234692947261)
 };

\addplot[color=black,dashed]coordinates { 
(0.0,0.9298555024404189)
(30.0,0.9298555024404189)
 };

\addplot[color=black,dotted,very thick]coordinates { 
(0.0,0.24117424241095473)
(30.0,0.24117424241095473)
 };


\legend{$t=0$,$t=10$,$t=20$,$t=30$,$t=40$,$G^{-1}(M)$,$G^{-1}(C/L)$}
\end{axis} 
\end{tikzpicture}
\begin{tikzpicture}[thick,scale=0.65, every node/.style={scale=1.0}] \begin{axis}[xlabel=$x$,ylabel={$p$ for $M=0.04$ and $C=0.8$},
xmin=0,xmax=30,
ymin=0,
ymin=0,ymax=1,
legend style={at={(0.8,1.1)},
anchor=north west,
 legend columns=2}]
\addplot[color=red]coordinates { 
(0.0,0.3508840630147442)
(1.4285714285714286,0.3508840630147442)
(2.857142857142857,0.3508840630147442)
(4.285714285714286,0.3508840630147442)
(5.714285714285714,0.3508840630147442)
(7.142857142857143,0.3508840630147442)
(8.571428571428571,0.3508840630147442)
(10.0,0.3508840630147442)
(11.428571428571429,0.3508840630147442)
(12.857142857142858,0.3508840630147442)
(14.285714285714286,0.3508840630147442)
(15.714285714285715,0.3508840630147442)
(17.142857142857142,0.3508840630147442)
(18.571428571428573,0.3508840630147442)
(20.0,0.3508840630147442)
(21.42857142857143,0.3508840630147442)
(22.857142857142858,0.3508840630147442)
(24.285714285714285,0.3508840630147442)
(25.714285714285715,0.3508840630147442)
(27.142857142857142,0.3508840630147442)
(28.571428571428573,0.3508840630147442)
(30.0,0.3508840630147442)
 };

\addplot[color=black!60!green]coordinates { 
(0.0,0.32539950019264957)
(1.4285714285714286,0.32539950019264957)
(2.857142857142857,0.3253995001926497)
(4.285714285714286,0.3253995001926497)
(5.714285714285714,0.3253995001926498)
(7.142857142857143,0.3253995001926498)
(8.571428571428571,0.3253995001926498)
(10.0,0.3253995001926498)
(11.428571428571429,0.3253995001926498)
(12.857142857142858,0.3253995001926498)
(14.285714285714286,0.3253995001926498)
(15.714285714285715,0.3253995001926498)
(17.142857142857142,0.3253995001926498)
(18.571428571428573,0.3253995001926497)
(20.0,0.3253995001926497)
(21.42857142857143,0.32539950019264957)
(22.857142857142858,0.32539950019264957)
(24.285714285714285,0.32539950019264946)
(25.714285714285715,0.32539950019264946)
(27.142857142857142,0.32539950019264946)
(28.571428571428573,0.32539950019264946)
(30.0,0.32539950019264946)
 };

\addplot[color=blue]coordinates { 
(0.0,0.26796763826499964)
(1.4285714285714286,0.26796763826499964)
(2.857142857142857,0.26796763826499964)
(4.285714285714286,0.26796763826499964)
(5.714285714285714,0.26796763826499964)
(7.142857142857143,0.26796763826499964)
(8.571428571428571,0.26796763826499964)
(10.0,0.26796763826499964)
(11.428571428571429,0.26796763826499964)
(12.857142857142858,0.26796763826499964)
(14.285714285714286,0.26796763826499964)
(15.714285714285715,0.26796763826499964)
(17.142857142857142,0.26796763826499964)
(18.571428571428573,0.26796763826499964)
(20.0,0.26796763826499964)
(21.42857142857143,0.2679676382649996)
(22.857142857142858,0.2679676382649995)
(24.285714285714285,0.2679676382649994)
(25.714285714285715,0.2679676382649994)
(27.142857142857142,0.2679676382649994)
(28.571428571428573,0.2679676382649994)
(30.0,0.2679676382649994)
 };

\addplot[color=brown]coordinates { 
(0.0,0.1718136276547343)
(1.4285714285714286,0.17181362765473432)
(2.857142857142857,0.17181362765473432)
(4.285714285714286,0.17181362765473432)
(5.714285714285714,0.17181362765473432)
(7.142857142857143,0.17181362765473432)
(8.571428571428571,0.17181362765473432)
(10.0,0.17181362765473432)
(11.428571428571429,0.17181362765473432)
(12.857142857142858,0.17181362765473432)
(14.285714285714286,0.17181362765473432)
(15.714285714285715,0.17181362765473432)
(17.142857142857142,0.17181362765473432)
(18.571428571428573,0.17181362765473432)
(20.0,0.17181362765473432)
(21.42857142857143,0.17181362765473432)
(22.857142857142858,0.17181362765473432)
(24.285714285714285,0.17181362765473432)
(25.714285714285715,0.1718136276547343)
(27.142857142857142,0.17181362765473424)
(28.571428571428573,0.1718136276547342)
(30.0,0.1718136276547342)
 };

\addplot[color=orange]coordinates { 
(0.0,0.07839149740952306)
(1.4285714285714286,0.07839149740952307)
(2.857142857142857,0.07839149740952307)
(4.285714285714286,0.07839149740952307)
(5.714285714285714,0.07839149740952307)
(7.142857142857143,0.07839149740952307)
(8.571428571428571,0.07839149740952307)
(10.0,0.07839149740952307)
(11.428571428571429,0.07839149740952307)
(12.857142857142858,0.07839149740952307)
(14.285714285714286,0.07839149740952307)
(15.714285714285715,0.07839149740952307)
(17.142857142857142,0.07839149740952307)
(18.571428571428573,0.07839149740952307)
(20.0,0.07839149740952307)
(21.42857142857143,0.07839149740952314)
(22.857142857142858,0.07839149740952314)
(24.285714285714285,0.07839149740952314)
(25.714285714285715,0.07839149740952314)
(27.142857142857142,0.07839149740952314)
(28.571428571428573,0.07839149740952314)
(30.0,0.07839149740952314)
 };

\addplot[color=black,dashed]coordinates { 
(0.0,0.9298555024404189)
(30.0,0.9298555024404189)
 };

\addplot[color=black,dotted,very thick]coordinates { 
(0.0,0.3508840630147442)
(30.0,0.3508840630147442)
 };


\legend{$t=0$,$t=10$,$t=20$,$t=30$,$t=40$,$G^{-1}(M)$,$G^{-1}(C/L)$}
\end{axis} 
\end{tikzpicture}
\end{center}
\caption{Case {$C/|\Omega|<M$}: 
Optimal solution $p$ to Problem~\eqref{prob:reduced} at time $t\in\{0,10,20,30,40\}$ for  $C\in\{0.5,0.8\}$ and $M=0.04$  thanks to Uzawa algorithms
\label{fig:3}}
\end{figure}
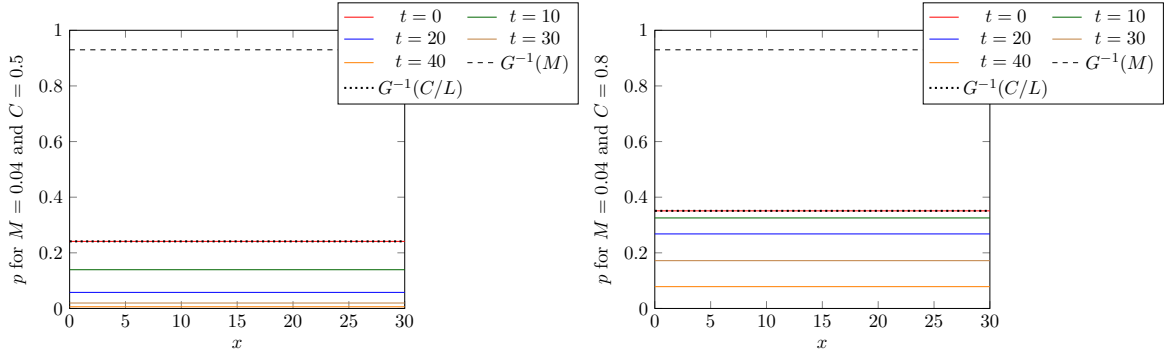

\section{Perspectives}
In a near future, we foresee to investigate a more involved model, closer to practical experiments, where one aims at determining release distributions in time and space, assuming that:
\begin{itemize}
\item releases are done periodically in time (for instance every week) and are impulses in time\footnote{We consider Dirac measures since at the time-level of the study (namely, some generations), the release can be considered as instantaneous.};
\item at each release, the largest allowed amount of mosquitoes is released, corresponding to the maximal  production capacity per week (which is relevant, according to the comparison principle).
\end{itemize}
As a consequence, we will be interested in determining the optimal way of releasing spatially the infected mosquitoes.
Considering $N$ releases, we denote by $t_0=0 < t_1 < \ldots <t_{N-1} < T$, $t_i=i \Delta T$, the release times. Rewriting the $L^1$ constraint on the control as $\langle u,1\rangle_{\mathcal{D'},\mathcal{D}((0,T)\times \Omega)}\leq C$, the control function reads
$$
u(t,x) = \sum_{i=0}^{N-1} u_i(x) \delta_{\{t=t_i\}}, \quad \mbox{ with } \sum_{i=0}^{N-1}\int_\Omega u_i(x)\,dx \leq C,
$$
where the pointwise constraint is modified into $0\leq u_i(\cdot)\leq M$.

The new optimal design problem reads
\[
\inf_{\mathbf{u}\in \mathcal{V}_{C,M}} \tilde{J}_T(\mathbf{u})
\tag{$\mathcal{P}'_{\text{full}}$}, \quad \text{where }\mathbf{u}=(u_i)_{0\leq i\leq N-1}, \quad \tilde{J}_T(\mathbf{u})=J_T\left(\sum_{i=0}^{N-1} u_i(\cdot) \delta_{\{t=t_i\}}\right)
\]
and
\begin{multline*}
\mathcal{V}_{C,M} \\= \left\{\mathbf{u} = (u_i(\cdot))_{0\leq i\leq N-1}, \quad 0\leq u_i\leq M \text{ a.e. in }\Omega, \ i\in \left\{0,\dots,N-1\right\}, \ \sum_{i=0}^{N-1} \int_\Omega u_i(x)\,dx \leq C\right\}.
\end{multline*}
As done in this article, System \eqref{eq:pintro} can be recast without source measure terms, coming from the specific form of the control functions. 

In a second time, we will also look at dropping the assumption on the frequency of releases and determine optimal times of releases (in the spirit of \cite{APSV2018}, where a simpler ODE model were considered).

Another interesting question is also raised by the spatial heterogeneities. Indeed, in field experiments the environment is not homogeneous in space. Then an important issue, from an experimental point of view, is to determine how to adapt the releases with respect to the spacial heterogeneities to optimize the success of the replacement strategies.

\section*{Acknowledgments}
M. Duprez, Y. Privat and N. Vauchelet were partially supported by the Project ''Analysis and simulation of optimal shapes - application to lifesciences'' of the Paris City Hall. Y. Privat was partially supported by the ANR Project ANR-18-CE40-0013 - SHAPO on Shape Optimization.
\blue{We also warmly thank the referees for their comments and suggestions.}
\bibliographystyle{abbrv}
\bibliography{biblio}

\end{document}